\renewcommand{\eqref}[1]{\hyperref[#1]{(\ref{#1})}}
\newlist{enumlist}{enumerate}{1}
\setlist[enumlist]{labelindent=0cm,label=\arabic*.,labelwidth=2.5ex,labelsep=0.5ex,leftmargin=3ex,align=left,topsep=0.5ex,itemsep=1ex,parsep=1ex}
\newlist{itemlist}{itemize}{1}
\setlist[itemlist]{labelindent=0cm,label=$\bullet$,labelwidth=2.5ex,labelsep=0.5ex,leftmargin=3ex,align=left,topsep=0.5ex,itemsep=1ex,parsep=1ex}
\numberwithin{equation}{section}
\theoremstyle{definition}\newtheorem{definition}{Definition}[section]
\newtheorem*{definition*}{Definition}
\newtheorem{remark}[definition]{Remark}
\newtheorem*{example*}{Example}
\newtheorem*{examples*}{Examples}
\newtheorem*{oproblem}{Open Problem}}
\newtheorem{proposition}[definition]{Proposition}
\newtheorem{lemma}[definition]{Lemma}
\newtheorem{theorem}[definition]{Theorem}
\newtheorem{corollary}[definition]{Corollary}
\newtheorem{conjecture}[definition]{Conjecture}
\newtheorem{letterthm}{Theorem}
\theoremstyle{definition}}
\newcommand{\C}{\mathbb{C}}
\newcommand{\cC}{\mathcal{C}}
\newcommand{\ot}{\otimes}
\newcommand{\Z}{\mathbb{Z}}
\newcommand{\cO}{\mathcal{O}}
\newcommand{\N}{\mathbb{N}}
\newcommand{\cG}{\mathcal{G}}
\newcommand{\cF}{\mathcal{F}}
\newcommand{\cM}{\mathcal{M}}
\newcommand{\cP}{\mathcal{P}}
\newcommand{\bP}{\mathbb{P}}
\newcommand{\Y}{\mathbb{Y}}
\newcommand{\G}{\mathbb{G}}
\DeclareMathOperator{\End}{End}
\DeclareMathOperator{\mult}{mult}
\DeclareMathOperator{\tr}{tr}
\DeclareMathOperator{\Br}{Br}
\DeclareMathOperator{\Part}{Part}
\DeclareMathOperator{\Ind}{Ind}
\begin{document}

\begin{center}
{\boldmath\Large\bf Traces on diagram algebras II: Centralizer algebras of easy groups and new variations of the Young graph}

\bigskip

{\sc by Jonas Wahl\footnote{\noindent Hausdorff Center for Mathematics, Bonn (Germany).\\ E-mail: wahl@iam.uni-bonn.de.}}

\end{center}

\begin{abstract}
\noindent In continuation of our recent work \cite{Wa20}, we classify the extremal traces on infinite diagram algebras that appear in the context of Schur-Weyl duality for Banica and Speicher's easy groups \cite{BS09}. We show that the branching graphs of these algebras describe walks on new variations of the Young graph which describe curious ways of growing Young diagrams. As a consequence, we prove that the extremal traces on generic rook-Brauer algebras are always extensions of extremal traces on the  group algebra $\C[S_{\infty}]$ of the infinite symmetric group. Moreover, we conjecture that the same is true for generic parameter deformations of the centralizers of the hyperoctahedral group and we reduce this conjecture to a conceptually much simpler numerical statement. Lastly, we address the trace classification problem for the Schur-Weyl dual of the halfliberated orthogonal group $O_N^*$, in which case extremal traces are always extensions of extremal traces on $\C[S_{\infty} \times S_{\infty}]$. Our approach relies on methods developed by Vershik and Nikitin in \cite{VN06}.
\end{abstract}

\section{Introduction}

The question of how to classify extremal traces on inductive limits of finite-dimensional semisimple algebras lies at the very heart of asymptotic representation theory and has led to a bouquet of beautiful results as well as a large number of applications in other areas of mathematics (see e.g. \cite{VK82} \cite{Bn98}\cite{O03} \cite{BO16} \cite{LPW19} just to name a few). In this article, we address this question for \emph{diagram algebras}, a class of algebras whose origins can be traced back to the works of Brauer \cite{Br37} and Weyl \cite{W46} on what is known today as Schur-Weyl duality for compact groups. Since these early works, a myriad of interesting diagram algebras have been found in the context of statistical mechanics \cite{MaS94} \cite{Ma96} \cite{Ma00} \cite{Jo94}, subfactor and knot theory \cite{Jo83} \cite{BiJo95} and quantum groups \cite{BS09} \cite{We13} \cite{RaWe16} and the literature discussing their properties and applications is vast and can not be done justice here (for a start, see \cite{HR05} \cite{GL04} \cite{COSSZ20} and the references therein). Although diagram algebras admit a natural infinite-dimensional direct limit object, until recently the trace classification question had only been addressed for few examples in the literature \cite{Was81} \cite{VN06}, see also \cite{VN11}. In our recent work \cite{Wa20}, we therefore started to present a first set of answers to the trace classification question by classifying extremal traces for a first class of diagram algebras called \emph{noncrossing} or \emph{planar} diagram algebras. For such an algebra, the trace classification problem could be solved by converting it into equivalent more approachable classification problem for random lattice paths or random walks on trees (depending on the specific algebra), see \cite{Wa20}.

In this article, we classify extremal traces for diagram algebras containing the so-called \emph{simple crossing}. Thanks to the classification of \emph{categories of set partitions} with the simple crossing in \cite{BS09} yielding exactly six examples (denoted by $\cC_{\mathcal{S}}, \cC_{\mathcal{S}'},\cC_{\mathcal{H}},\cC_{\mathcal{B}}, \cC_{\mathcal{B}'} ,\cC_{\mathcal{O}}$), one can deduce that there are four inductive series of diagram algebras of interest (since $\cC_{\mathcal{S}}$ and $\cC_{\mathcal{S}'}$ give the same series and the same is true for $\cC_{\mathcal{B}}$ and $\cC_{\mathcal{B}'}$). These series are
\begin{itemize}
\item The \emph{partition algebras} $ \mathrm{P}_{\delta}(k)= A_{(\mathcal{S},\delta)}(k) , \ k \geq 0$ discovered by Martin  \cite{MaS94} \cite{Ma96} \cite{Ma00} and Jones \cite{Jo94};
\item The \emph{Brauer algebras} $ \Br_{\delta}(k) = A_{(\mathcal{O},\delta)}(k), \ k \geq 0$ \cite{Br37} that were thoroughly analyzed by Wenzl in \cite{Wen88};
\item The \emph{rook-Brauer algebras} $\mathrm{rBr}_{\delta}(k)= A_{(\mathcal{B},\delta)}(k) , \ k \geq 0$  studied in \cite{dMH13};
\item the algebras $A_{(\mathcal{H},\delta)}(k), \ k \geq 0$ studied in \cite{Or05} which do not seem to have a specific name in the literature.
\end{itemize}
All of these algebras depend on an additional complex parameter $\delta \in \C$, called the \emph{loop parameter}. When this parameter is chosen to be a positive integer $\delta =n \in \N$, and when $k$ is small enough compared to $n$ (for instance $2k < n$ for the partition algebra), then all of these algebras are isomorphic to the centralizer algebra of a compact group in a tensor power of their standard representation:
\begin{itemize}
\item the partition algebra $ P_{n}(k)$ is isomorphic to the centralizer algebra $\End_{S_n}(V^{\ot k})$ of the symmetric group $S_n$;
\item the Brauer algebra $ B_{n}(k)$ is isomorphic to the centralizer algebra $\End_{O_n}(V^{\ot k})$ of the orthogonal group $O_n$;
\item the rook-Brauer algebra $ rB_{n}(k)$ is isomorphic to the centralizer algebra $\End_{B_n}(V^{\ot k})$ of the bistochastic group $B_n$;
\item the  algebra $ A_{(\mathcal{H},n)}(k)$ is isomorphic to the centralizer algebra $\End_{H_n}(V^{\ot k})$ of the hyperoctahedral group $H_n$.
\end{itemize} 

Unfortunately, for the choice $\delta =n$, none of these algebras are semisimple when $2k \geq n$, see e.g. \cite{FM20}. However, whenever one chooses $\delta$ 'generically' (see Remark \ref{rem.genericparameter}), the diagram algebras above are semisimple for all $k \geq 0$ and therefore admit infinite versions, that is to say direct limit algebras $A_{(\mathcal{X},\delta)}(\infty), \ \mathcal{X} = \mathcal{S}, \mathcal{O}, \mathcal{B}, \mathcal{H}$.

Most of the representation theoretic data of the algebra $A_{(\mathcal{X},\delta)}(\infty), \ \mathcal{X} = \mathcal{S}, \mathcal{O}, \mathcal{B}, \mathcal{H}$ can be encoded in the \emph{branching graph} or \emph{Bratelli diagram} of the sequence $A_{(\mathcal{X},\delta)}(1) \subset A_{(\mathcal{X},\delta)}(2) \subset \dots$. In short, the vertices on level $n$ of this graded bipartite graph are the irreducible representations of $A_{(\mathcal{X},\delta)}(n)$ and the number of edges from an $n-1$-th level representation $\pi$ to the $n$-th level representation $\rho$ is the multiplicity of $\pi$ in the restriction of $\rho$ to $A_{(\mathcal{X},\delta)}(n-1)$. Our first observation in this article is that the branching graphs of the algebras $A_{(\mathcal{X},\delta)}(\infty), \ \mathcal{X} = \mathcal{S}, \mathcal{O}, \mathcal{B}, \mathcal{H}$ can be obtained from smaller graphs called \emph{principal graphs} through a process called \emph{pascalization} in \cite{VN06}. In all four cases, these smaller graphs are closely related to famous Young graph $\Y$, the branching graph of the infinite symmetric group $S_{\infty}$. In fact, for the infinite Brauer algebra, the principal graph is exactly the Young graph \cite{VN06}, while for the other cases some adaptations are necessary. When $\mathcal{X} = \mathcal{H}$, the principal graph looks particularly appealing as it encodes an interesting 'second order' growth model for pairs $(\lambda,\mu)$ of \emph{Young diagrams}: on may either grow the first order diagram $\mu$ by adding a box or one may grow the second order diagram $\lambda$ by adding a box that is taken from $\mu$. We call this new branching graph the \emph{coupled Young graph}. 

Our main order of business in this article is the study of the \emph{minimal boundary} of the principal graphs and their pascalizations as this boundary is known to be homeomorphic to the simplex of extremal traces on the associated direct limit algebras. The minimal boundaries of different branching graphs have been studied extensively in the literature (see e.g. \cite{BO16} \cite{VN06} \cite{VN11} \cite{Go12} \cite{Wa20} and the references therein). Since the minimal boundary can be alternatively interpreted as the simplex of extremal traces on the associated inductive limit algebra, see Theorem \ref{thm.boundarytraces}, or a simplex of probability measures on the branching graphs it is of interest from both an algebraic and a stochastic viewpoint. The most prominent and influential example of a minimal boundary computation is a result of Thoma \cite{Th64} that describes the minimal boundary of the Young graph $\Y$. Fittingly, this boundary is therefore referred to nowadays as the \emph{Thoma simplex}. It will also appear as the minimal boundary of our principal graphs.

\begin{letterthm}
The minimal boundaries of the principal graphs of the tower of algebras $A_{(\mathcal{X},\delta)}(0) \subset A_{(\mathcal{X},\delta)}(1) \subset \dots ,$  for $\mathcal{X} = \mathcal{S}, \mathcal{O}, \mathcal{B}, \mathcal{H}$ are homeomorphic to the Thoma simplex $T$. In particular, this holds true for the coupled Young graph.
\end{letterthm}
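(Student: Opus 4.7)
The plan is to treat the four cases $\mathcal{X} = \mathcal{S}, \mathcal{O}, \mathcal{B}, \mathcal{H}$ in a unified fashion by following the Vershik--Kerov ergodic method for identifying the minimal boundary of a branching graph. For each principal graph $\Gamma_{\mathcal{X}}$, the minimal boundary is homeomorphic to the simplex of extremal central probability measures on the space of infinite paths, and such measures are determined by their cotransition coefficients, which can be computed as limits of ratios $\dim(v)/\dim(w)$ of path counts from the root to nearby vertices $v, w$ at a common level. The case $\mathcal{X} = \mathcal{O}$ is immediate from \cite{VN06}, since the principal graph is $\Y$ itself and Thoma's theorem \cite{Th64} applies directly.

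For $\mathcal{X} = \mathcal{S}$ and $\mathcal{X} = \mathcal{B}$, I expect the principal graph to be a mild decoration of $\Y$ in which each vertex is a Young diagram together with a small auxiliary datum controlled by the loop parameter $\delta$. The first step is an explicit dimension formula expressing $\dim(v)$ at a vertex of level $n$ in terms of Young-diagram dimensions at lower levels. The second step is to show that along an infinite path the normalized cotransition ratios converge precisely when the row and column frequencies of the associated Young diagram converge in the sense of Thoma. This produces a continuous bijection between the minimal boundary of $\Gamma_{\mathcal{X}}$ and $T$, and compactness of both sides upgrades it to a homeomorphism.

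The truly delicate case is $\mathcal{X} = \mathcal{H}$, the coupled Young graph, whose vertices are pairs $(\lambda,\mu)$ of Young diagrams and whose edges either add a box to $\mu$ or transfer a box from $\mu$ to $\lambda$. The plan is to derive a dimension formula of the form
\[
\dim(\lambda,\mu) \;=\; \sum_{\nu} c^{\lambda,\mu}_{\nu}\,\dim_{\Y}(\nu),
\]
with combinatorial coefficients $c^{\lambda,\mu}_{\nu}$ reminiscent of Littlewood--Richardson numbers. The key rigidity step is then to show that along an ergodic infinite path the second-order diagram $\lambda$ must asymptotically align with $\mu$, so that only a single copy of the Thoma parameters survives in the limit.

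This rigidity statement in the coupled case will be the main obstacle. Heuristically it reflects the fact that every box of $\lambda$ is stolen from $\mu$, so in the large-$n$ limit the two diagrams feel the same asymptotic shape. Technically I would combine the pascalization formalism of \cite{VN06} with a quantitative ergodic argument modelled on the classical Vershik--Kerov analysis of $\Y$: any ergodic central measure on infinite paths must concentrate on paths whose second-order coordinate $\lambda$ is subsumed into $\mu$ in the Thoma limit, giving the desired parametrisation by a single copy of $T$.
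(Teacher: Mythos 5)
Your handling of $\mathcal{X}=\mathcal{O}$ is fine, and for $\mathcal{X}=\mathcal{S}$ the paper simply quotes \cite{VN06} (the principal graph there is the Young graph with every level doubled, not a $\delta$-dependent decoration). For $\mathcal{X}=\mathcal{B}$ the paper does not run the ergodic method on the principal graph $\Lambda$ (whose $n$-th level is $\Y_n\sqcup\Y_{n-1}$, with edges $\mu\nearrow\mu$ and $\mu\nearrow\mu+\Box$); instead it identifies $\Lambda$ as the branching graph of the tower $A_n=\C[S_n]\oplus M_n(\C[S_{n-1}])$ and shows by a short consistency computation on the convex coefficients $c_n$ that every trace is concentrated on the $\C[S_n]$ summands, hence lifts from $\C[S_\infty]$. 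Your frequency-convergence plan might be made to work there, but as written it supplies no dimension formula and no convergence criterion, so it is a programme rather than a proof.

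The genuine gap is in the coupled Young graph case $\mathcal{X}=\mathcal{H}$, where both your predicted combinatorics and your predicted limit behaviour are off. First, $\dim_\Theta(\lambda,\mu)$ is not a Littlewood--Richardson-type sum over a third diagram $\nu$: it factorizes exactly as $\dim_\Theta(\lambda,\mu)=M(|\lambda|,|\mu|)\,\dim_\Y(\lambda)\dim_\Y(\mu)$ with $M(k,l)=\tfrac{1}{2^k}\tfrac{(l+2k)!}{k!\,l!}$, and, more importantly, relative path counts factorize as $\dim_\Theta((\lambda,\mu);(\tilde\lambda,\tilde\mu))=\dim_\Y(\lambda;\tilde\lambda)\cdot\dim_{\cP(\Y)}(\mu;\tilde\mu)$, because a path in $\Theta$ is precisely a pair consisting of a monotone path in $\Y$ (for $\lambda$) and an up/down walk on the pascalized Young graph (for $\mu$), coupled by reading off the down-steps. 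Second, the correct rigidity statement is not that $\lambda$ grows and ``asymptotically aligns with $\mu$''; it is that every ergodic central measure assigns mass zero to every vertex with $\lambda\neq\emptyset$, i.e.\ the second-order diagram never acquires a single box. This follows by bounding the ergodic-method ratio above by $\dim_{\cP(\Y)}(\mu;\mu_n)/\dim_{\cP(\Y)}(\mu_n)$, which tends to $0$ whenever $|\mu|<n$ by Vershik--Nikitin's analysis of $\cP(\Y)$, and $|\mu|<n$ is exactly the condition $\lambda\neq\emptyset$. Your heuristic that ``the two diagrams feel the same asymptotic shape'' points in the wrong direction, and you give no mechanism for proving your alignment claim; without the product decomposition of paths and the reduction to the known boundary of $\cP(\Y)$, the argument does not close.
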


For $\mathcal{X} = \mathcal{O}$, this theorem is just a rephrasing of Thoma's theorem as the principal graph is the Young graph $\Y$ and for $\mathcal{X} = \mathcal{S}$, this result was already observed in \cite{VN06}. The remaining two cases will be proven in Sections \ref{subsec.bistochasticprincipal} and \ref{subsec.coupledYoung}. Next, we study the minimal boundaries of the pascalizations of our principal graphs, that is to say the branching graphs of our four infinite diagram algebras. In stark contrast to the results of the companion article \cite{Wa20}, where pascalization led to a much larger boundary and in particular to interesting examples of random walks on trees, we will show that pascalization will not enlarge the boundary for the case  $\mathcal{X} = \mathcal{B}$, see Section \ref{sec.boundaryrookBrauer}. A similar result was already obtained for $\mathcal{X} = \mathcal{S}, \mathcal{O}$ in \cite{VN06}.

\begin{letterthm}
The minimal boundary of the branching graph of the tower of rook-Brauer algebras $\mathrm{rBr}_{\delta}(0)= \mathrm{rBr}_{\delta}(1) \subset \dots$ is fully supported on its principal graph and thus homeomorphic to the Thoma simplex.
\end{letterthm}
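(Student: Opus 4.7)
My approach would follow the strategy pioneered by Vershik and Nikitin in \cite{VN06} for the infinite Brauer algebra, which is the closest analogue of our situation. The goal is to show that the pascalization procedure does not introduce new ergodic central measures; combined with Theorem A, this then identifies the minimal boundary of the full branching graph with the Thoma simplex.

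The first step is to make the pascalization structure explicit in our setting. Each vertex at level $k$ of the branching graph of the rook-Brauer tower should decompose as a pair $(\lambda, r)$ where $\lambda$ lies at level $k-r$ of the principal graph and $r$ encodes the number of ``rook'' positions, with dimensions factoring as
\[
\dim(\lambda, r) \;=\; m(k,r)\,\dim_{\mathrm{prin}}(\lambda),
\]
for a combinatorial multiplicity $m(k,r)$ that is independent of $\lambda$. The existence of such a product decomposition, which is the defining feature of pascalization in the sense of \cite{VN06}, should follow from the semisimple structure of $\mathrm{rBr}_\delta(k)$ at generic $\delta$ together with the known description of its irreducible representations in terms of Brauer-type data augmented by a choice of which vertices are ``rook'' positions.

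With this in hand, the proof splits into two parts. On the one hand, any central probability measure on the path space of the pascalized branching graph should project to a central measure on the principal path space by forgetting the rook coordinate, and this projection preserves ergodicity; by Theorem A, the projected measure corresponds to a point in the Thoma simplex. On the other hand, one must show that the projection is a bijection on ergodic measures, i.e.\ that each point of the Thoma simplex admits a unique ergodic central lift. This uniqueness is the heart of the argument: along a typical path $(v_k)_k$ sampled from such a measure, the rook component $r_k$ should evolve according to transition probabilities controlled by ratios of the multiplicities $m(k,r)$. A law-of-large-numbers argument should then yield almost-sure convergence of $r_k/k$ to a constant depending only on the Thoma parameter, thereby pinning down the lift.

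The main obstacle is the quantitative control required for this last step: one must obtain sufficiently sharp asymptotics for $m(k,r)$ and for $\dim_{\mathrm{prin}}(\lambda_k)$ along paths directed towards a Thoma point, uniformly in $r$. A subtlety specific to the rook-Brauer case, absent in the pure Brauer situation of \cite{VN06}, is that both ``remove-a-box'' and ``insert-a-rook'' type edges appear simultaneously in the branching; one must verify that these two kinds of moves genuinely decouple into the clean product form above, so that the standard pascalization machinery of \cite{VN06} can be brought to bear.
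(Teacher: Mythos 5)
Your first step, the factorization $\dim_{\Gamma_B}(n,\lambda)=m(n,r)\dim_{\Y}(\lambda)$ with $r=n-|\lambda|$, is exactly Lemma \ref{lem.recursionbistochastic} of the paper and is correct. The architecture you build on top of it, however, has a genuine gap, for two reasons. First, the ``projection by forgetting the rook coordinate'' is not well defined: an infinite path on the branching graph of the rook-Brauer tower is a \emph{lazy walk} on the Young graph (at each step the shape may grow, shrink, or stay put), and such a walk does not determine a monotone path on $\Y$; the centrality relations it satisfies are those of $\Gamma_B$, not of $\Y$, so there is no central measure on the path space of the principal graph to project onto unless one already knows the walk is deterministic --- which is the statement to be proved. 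Second, your plan to classify lifts via a law of large numbers for $r_k/k$ implicitly assumes the answer: the content of the theorem is that no ergodic central measure ever visits a lagging vertex, i.e.\ the drift is forced to be zero and in fact $r_k=0$ almost surely. That this is not automatic is shown by the companion paper \cite{Wa20}, where for tree-like principal graphs pascalization genuinely enlarges the boundary and drifting lifts do occur; so uniqueness of the lift cannot be taken for granted, it \emph{is} the theorem.

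The missing mechanism is the one from \cite[Lemma 2.2]{VN06}, adapted in the paper as Lemma \ref{lem.zeroprobnew}: if
\[
\lim_{n\to\infty}\ \max_{|\lambda|<n}\ \frac{\dim_{\Gamma_B}(n-1,\lambda)}{\dim_{\Gamma_B}(n,\lambda)}=0,
\]
then by the ergodic method every ergodic central measure assigns probability zero to every vertex $(n_0,\lambda)$ with $|\lambda|<n_0$, because any walk through such a vertex must take a non-forward step, and the paths through it inject into those counted by $\dim_{\Gamma_B}(n-1,\lambda'_n)$. Via the factorization this reduces to showing $\max_{l<n}M(n-1,l)/M(n,l)=M(n-1,0)/M(n,0)\to 0$ (Corollary \ref{cor.bistochasticlimit}), which the paper obtains from the explicit relation $M(n+l,l)=\binom{n+l}{l}M(n,0)$ and the monotonicity of the ratios $M(n-1,0)/M(n,0)$, combined with the superexponential growth of the number of closed walks on $\Y$ from \cite[Theorem 2.10]{VN06}. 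This quantitative input --- the fact that the Young graph is too connected for a lazy central walk to retain mass on non-deterministic trajectories --- is precisely what your sketch labels as ``the main obstacle'' and leaves unaddressed; it is the actual proof.
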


We conjecture that the same result is true in the remaining case $A_{(\mathcal{H},\delta)}(\infty)$ and we prove that this conjecture would be implied by a set of inequalities on a recursively defined array of integers $K(n,k,l), \ n= 0,1,\dots , \ 2k+l \leq n$, see Section \ref{subsec.boundpasccoupledYoung}. Since these inequalities can be checked on a computer for small values of $n$ (which we have done for $n \leq 20$), this provides some empirical evidence to back up our conjecture.

As a final result, we show in Section \ref{sec.halfliborth} that the infinite diagram algebra dual to the \emph{half-liberated orthogonal group} in Banica's and Speicher's theory of partition (or easy) quantum groups is exactly the \emph{infinite walled Brauer algebra} studied in \cite{Ni07} \cite{VN06}. We therefore conclude from \cite{VN06} that the minimal boundary of its branching graph is homeomorphic to two copies $T \times T$ of the Thoma simplex, see Theorem \ref{thm.boundaryhalliborth}. The parallel problem for other examples of halfliberated diagram algebras remains open as their representation theory is largely unknown.

We end this article with a short appendix, see Appendix \ref{Append.A}, where we add a few remarks on a tower of algebras whose branching graph is the coupled Young graph. In particular we compute their dimensions of these algebras and we suggest a presentation in terms of generators and relations.     


\paragraph*{Acknowledgements}
I am grateful to A. Bufetov, A. Freslon, P. Tarrago and M. Weber for their thoughful remarks and suggestions. I thank G. Olshanski for pointing out the article \cite{VN11} to me. 
This work was supported by the Deutsche Forschungsgemeinschaft (DFG, German Research Foundation) under Germany’s Excellence Strategy – EXC 2047 “Hausdorff Center for Mathematics”.

\section{Preliminaries on branching graphs and their minimal boundaries}

\subsection{Branching graphs and pascalization}

A \emph{branching graph} or \emph{Bratelli diagram} $\Gamma$ is a bipartite locally finite graded rooted graph, meaning that its set of vertices can be subdivided into levels $\sqcup_{n = o}^N \Gamma_n, \ N \in \N \cup \{ \infty \}$ with $|\Gamma_0| = 1$ and that edges can only connect vertices of adjacent levels. The unique vertex in $\Gamma_0$ is called the root of the graph and denoted by $\emptyset$. All concrete examples of branching graphs in this article will have infinitely many level sets $\Gamma_n$ that will themselves be finite, i.e. $|\Gamma_n|< \infty$. If two vertices $\gamma \in \Gamma_n$ and $\tilde{\gamma} \in \Gamma_{n+1}$ are connected by an edge, we will often write $\gamma \nearrow \tilde{\gamma}$. We will also use the notation $\dim_{\Gamma}(\gamma,\xi)$ for the number of paths on $\Gamma$ leading from $\gamma \in \Gamma_n$ to $\xi \in \Gamma_m, \ m > n$ and we will shorten $\dim_{\Gamma}(\emptyset,\gamma)$ to $\dim_{\Gamma}(\gamma)$. \\ 
\\
Branching graphs encode the induction/restriction rules of inductive sequences $A_0 = \C \hookrightarrow A_1 \hookrightarrow A_2 \hookrightarrow \dots $ of finite-dimensional semisimple algebras. To recall how one gets a branching graph from such an inductive sequence, we introduce the following notation. If $A \subset B$ is an inclusion of semisimple algebras and $M$ is a left $B$-module, we denote by $M^{\downarrow}$ the left $A$-module obtained by restricting the action of $B$ to $A$.

\begin{definition} \label{def.indrestrgraph}
Let $A_0 = \C \hookrightarrow A_1 \hookrightarrow A_2 \hookrightarrow \dots $ be an inductive sequence of finite-dimensional semisimple algebras. The \emph{induction/restriction graph} of $(A_n)_{n \geq 0}$ is the branching graph $\Gamma$ defined in the following way.
\begin{itemize}
\item The $n$-th level vertex set $\Gamma_n$ is the set of (equivalence classes of) simple modules of $A_n$.
\item There are exactly $\mult(V, W^{\downarrow})$ edges between the simple $A_n$-module $V \in \Gamma_n$ and the simple $A_{n+1}$-module $W \in \Gamma_{n+1}$ where  $\mult(V, W^{\downarrow})$ is the multiplicity of $V$ in the decomposition of $W^{\downarrow}$ into simple $A_n$-modules.
\end{itemize}
\end{definition}

The branching graphs that we will discuss in this article will often be generated by smaller ones through a process called \emph{pascalization} in \cite{VN06}. See also \cite{Wa20} for more examples of pascalized graphs.

\begin{definition} \label{def.pascalization}
Let $\Gamma$ be a branching graph. The \emph{pascalization} $\cP(\Gamma)$ of $\Gamma$ is defined in the following way.
\begin{itemize}
\item The vertex set of level $n$ is $\cP(\Gamma)_n = \{ (n,\gamma) \ ; \ \gamma \in \Gamma_k, \ k \leq n, \ k \equiv n \mod 2 \}$. In particular, there is a natural projection $\pi: (n,\gamma) \mapsto \gamma$ from vertices of $\cP(\Gamma)$ to vertices of $\Gamma$.
\item In $\cP(\Gamma)$, the number of edges between $(n,\gamma) \in \cP(\Gamma)_n$ and $(n+1,\tilde{\gamma}) \in \cP(\Gamma)_{n+1}$ is the number of (undirected) edges between $\gamma \in \Gamma_k$ and $\tilde{\gamma} \in \Gamma_l$ in the original graph $\Gamma$. This number is only non-zero if $\gamma$ and $\tilde{\gamma}$ are  on neighbouring levels of $\Gamma$, that is $|k-l|=1$.
\end{itemize}
\end{definition}
.
The name \emph{pascalization} is motivated by the example of the Pascal graph, which can be understood as the pascalization of the graph of integers $\Z$ with $\N$-grading $n \mapsto |n|$ and edges connecting neighboring integers. The pascalization method is also a common tool in subfactor theory, see e.g. \cite{GHJ89}, although the terminology is inverse: if $\Lambda = \cP(\Gamma)$, one calls the smaller graph $\Gamma$ the \emph{principal graph} of $\Lambda$.

\subsection{The minimal boundary of a branching graph}

Branching graphs are a heavily used tool in asymptotic representation theory as the representation theory of inductive limit groups such as $S(\infty)$ or $U(\infty)$ can be understood through the study of certain measures on the space of infinite paths on their branching graphs. We refer the interested reader to the very nice textbooks \cite{BO16} \cite{Me17} for more information on these groups. \\ \\
For a branching graph $\Gamma$, let us denote by $(\Omega,\cF) = (\Omega_{\Gamma},\cF_{\Gamma})$ the space 
\[ \Omega = \{ \gamma_0 \nearrow \gamma_1 \nearrow \gamma_2 \nearrow \dots \} \subset \prod_{n\geq 0} \Gamma_n \]
of infinite paths on $\Gamma$ and by $\cF$ the restriction of the product $\sigma$-algebra  on $\prod_{n\geq 0} \Gamma_n$ to $\Omega$. 

\begin{definition} \label{def.centralmeasure}
A probability measure $\bP$ on $(\Omega,\cF)$ is called \emph{central} if for all $n\geq 0$, $\gamma \in \Gamma_n$, and every path $\gamma_0=\emptyset \nearrow \gamma_1 \nearrow \dots, \gamma_n=\gamma$ from the root to $\gamma$, we have 
\[\bP \left(\left\lbrace\omega=(\omega_0 \nearrow \omega_1 \nearrow \dots) \in \Omega \ ; \ \omega_1 = \gamma_1 ,\dots, \omega_n= \gamma \right\rbrace \right) = \frac{\bP(\{ \omega_n = \gamma \})}{\dim_{\Gamma}(\gamma)}. \]
The measure $\bP$ will be called \emph{ergodic} if the sets in $\cF$ that are invariant under changes of at most finitely many steps in paths, have $\bP$-measure $0$ or $1$.
\end{definition}  

The topological space of ergodic central measures on $(\Omega,\cF)$ (equipped with the weak topology) is called the \emph{minimal, Vershik-Kerov} or \emph{exit boundary} of $\Gamma$ and we will denote it by $\partial \Gamma$. An arbitrary central probability measure $\bP'$ can always be decomposed into its ergodic components, that is to say, there exists a (unique up to nullsets) probability measure $\mu$ on $\partial \Gamma$ such that $\bP'(A) = \int_{\partial \Gamma} \bP(A) \ d \mu(\bP)$. This follows for instance from the fact that the set $\cM_c(\Gamma)$ of central probability measures on $(\Omega,\cF)$ forms a Choquet simplex whose extremal points are given by the minimal boundary $\partial \Gamma$. The following theorem is a standard tool in asymptotic representation theory, see e.g.  \cite{BO16}.

\begin{theorem} \label{thm.boundarytraces}
Let $\C = A_0 \subset A_1 \subset \dots$ a sequence of finite-dimensional semisimple algebras with inductive limit algebra $A_{\infty}$. Further, let $\Gamma$ be the  branching graph of this sequence, so that the vertices $v \in V_n$ correspond to the irreducible summands of $A_n$ in its decomposition into matrix algebras. Then, the Choquet simplex of tracial states on $A_{\infty}$ is homeomorphic to $\cM_c(\Gamma)$. More precisely, the homeomorphism sends the measure $\bP$ to the tracial state $\tau_{\bP}$ determined by
\begin{align*}
\tau_{\bP}(x) = \sum_{i=1}^m \bP(X_n = v_i) \frac{\tau_i(x)}{\dim_{\Gamma}(v_i)} \qquad (x \in A_n),
\end{align*}
where $\tau_i$ is the extremal trace on $A_n$ given by the standard trace on the irreducible summand $v_i$. Under this homeomorphism, the pure tracial states are mapped to the elements of the boundary $\partial \Gamma$.
\end{theorem}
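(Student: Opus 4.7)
The plan is to pass through finite-level approximations: use the semisimple decomposition of each $A_n$ to identify a trace with a probability distribution on $\Gamma_n$, and then recognize compatibility of these distributions under the inclusions $A_n \subset A_{n+1}$ as exactly the centrality condition of Definition \ref{def.centralmeasure}.

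First I would invoke Artin--Wedderburn to write $A_n \cong \bigoplus_{v \in \Gamma_n} M_{\dim_\Gamma(v)}(\C)$, so that every tracial state on $A_n$ has the form $\tau_n = \sum_{v \in \Gamma_n} p_n(v)\, \tau_v/\dim_\Gamma(v)$ for a unique probability measure $p_n$ on $\Gamma_n$; explicitly $p_n(v) = \tau_n(z_v)$, where $z_v$ denotes the central minimal idempotent onto the $v$-summand and $\tau_v$ is the standard trace on that summand. By density of $\bigcup_n A_n$ in $A_\infty$, a tracial state $\tau$ on $A_\infty$ is the same datum as a compatible family $(\tau_n)_{n\ge0}$, and hence the same as a family of probability measures $(p_n)$ on the levels $\Gamma_n$.

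Second, I would unpack what the compatibility $\tau_{n+1}|_{A_n} = \tau_n$ tells us about $(p_n)$. By definition of the branching graph, the restriction of the simple $A_{n+1}$-module indexed by $w$ decomposes as $w^{\downarrow} = \bigoplus_v m(v,w)\, v$, with $m(v,w)$ equal to the number of edges between $v$ and $w$ in $\Gamma$; hence the normalized trace on the $w$-summand of $A_{n+1}$, restricted to $A_n$, equals
\[
\sum_{v \in \Gamma_n} \frac{m(v,w)\,\dim_\Gamma(v)}{\dim_\Gamma(w)} \cdot \frac{\tau_v}{\dim_\Gamma(v)}.
\]
Comparing coefficients gives the co-transition relation $p_n(v) = \sum_{w : v \nearrow w} m(v,w)\, \dim_\Gamma(v)\, p_{n+1}(w)/\dim_\Gamma(w)$. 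By Kolmogorov's extension theorem the consistent family $(p_n)$ extends uniquely to a probability measure $\bP$ on $(\Omega,\cF)$, and a direct computation shows that $\bP$ is central in the sense of Definition \ref{def.centralmeasure} if and only if this co-transition relation holds: both conditions are equivalent to the assertion that the conditional law of an initial segment of a path, given $\omega_n = \gamma$, is uniform on the $\dim_\Gamma(\gamma)$ paths from $\emptyset$ to $\gamma$. The resulting map $\tau \mapsto \bP$ is affine and bijective with inverse given by the formula in the theorem, and it is a homeomorphism because $\bP(\omega_n = v) = \tau(z_v)$ while the idempotents $\{z_v\}$ span each $A_n$ and together generate $A_\infty$.

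Finally, since the correspondence is an affine homeomorphism of Choquet simplices, extreme points are sent to extreme points; by definition the extreme points of $\cM_c(\Gamma)$ are the ergodic central measures, i.e.\ $\partial\Gamma$, and on the trace side the extreme points are the pure tracial states. The main obstacle in writing out a complete proof is not any single deep step, but rather the careful matrix-unit bookkeeping required to match the algebraic branching coefficients $m(v,w)$ with the combinatorial multiplicities of paths in $\Omega$, together with the standard but non-trivial tail-triviality argument identifying extreme central measures with ergodic ones in the projective-limit setting.
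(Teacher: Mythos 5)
The paper offers no proof of Theorem \ref{thm.boundarytraces}; it is quoted as a standard fact with a pointer to \cite{BO16}, and your argument is precisely the standard one: identify traces on $A_n$ with probability measures on $\Gamma_n$ via the Wedderburn decomposition (where the block sizes equal $\dim_{\Gamma}(v)$ because the embeddings are unital), translate the compatibility $\tau_{n+1}|_{A_n}=\tau_n$ into the co-transition relation, and extend to a central measure on path space. The outline is correct; the only slip is cosmetic --- the central idempotents $z_v$ span the center of $A_n$ rather than $A_n$ itself, but a tracial state is determined by its values on them, which is all your continuity argument requires.
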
 

Vershik and Kerov \cite{VK81} \cite{VK82} developed a general strategy for computing the minimal boundary of a branching graph $\Gamma$. Their approach is captured in the following theorem that is often refered to as the \emph{(Vershik-Kerov-)ergodic method}. 

\begin{theorem} \label{thm.ergodicmethod}
Let $\bP$ be an ergodic central measure on $(\Omega_{\Gamma},\cF_{\Gamma})$. Then, the set $S$ of paths $\gamma_0 \nearrow \gamma_1 \nearrow \gamma_2 \nearrow \dots$ for which the limit 
\begin{align*}
\lim_{n \to \infty} \frac{\dim_{\Gamma}(\lambda,\gamma_n)}{\dim_{\Gamma}(\gamma_n)}
\end{align*}
exists for every vertex $\lambda \in \Gamma_m, \ m \geq 0$ has $\bP$-measure $\bP(S)=1$. Moreover, the limit above is equal to $\bP(\{ \omega \in \Omega_{\Gamma}  \ ; \ \omega_m = \lambda \})$.
\end{theorem}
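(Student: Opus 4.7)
The plan is to run the classical Vershik--Kerov argument via Doob's backward (reverse) martingale convergence theorem. Fix a level $m$ and a vertex $\lambda \in \Gamma_m$, and set $f_\lambda(\omega) = \mathbf{1}\{\omega_m = \lambda\}$. I would work with the decreasing sequence of $\sigma$-algebras $\cT_n := \sigma(\omega_n, \omega_{n+1}, \dots)$ for $n \geq m$ and study the backward martingale $M_n := \bP(f_\lambda \mid \cT_n)$ along this filtration.

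First, I would use centrality to write $M_n$ explicitly in terms of dimensions. Centrality implies that conditional on $\omega_n = \gamma_n$, the initial segment $(\omega_0, \dots, \omega_{n-1})$ is uniform over the $\dim_\Gamma(\gamma_n)$ paths from $\emptyset$ to $\gamma_n$, and furthermore is conditionally independent of $(\omega_{n+1}, \omega_{n+2}, \dots)$ given $\omega_n$. Hence $M_n$ is a function of $\omega_n$ alone, and a direct count of paths from $\emptyset$ to $\gamma_n$ that pass through $\lambda$ at level $m$ identifies it with the dimension ratio appearing in the theorem (up to the normalization convention in the statement).

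Next, Doob's backward martingale convergence theorem yields $\bP$-a.s.\ convergence $M_n \to \bP(f_\lambda \mid \cT_\infty)$, where $\cT_\infty = \bigcap_n \cT_n$ is the tail $\sigma$-algebra. The key step is then to identify $\cT_\infty$ as $\bP$-trivial. For this I would argue that any $A \in \cT_\infty$ is $\cT_n$-measurable for every $n$, so modifying finitely many coordinates of a path $\omega \in A$ (while preserving the branching structure) produces a path that agrees with $\omega$ eventually and hence also lies in $A$; thus $A$ is invariant under finite path modifications, and the ergodicity hypothesis forces $\bP(A) \in \{0,1\}$. Consequently $\bP(f_\lambda \mid \cT_\infty) = \bP(\{\omega_m = \lambda\})$ $\bP$-almost surely.

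Finally, the argument for a single $\lambda$ produces a $\bP$-full-measure set $S_\lambda$ on which the limit exists and equals the claimed value; since each level $\Gamma_m$ is at most countable there are only countably many choices of $\lambda$, so intersecting gives the required single full-measure set $S = \bigcap_\lambda S_\lambda$ on which all ratios converge simultaneously. The conceptual heart of the argument is the interplay between centrality and the tail $\sigma$-algebra that reduces the asymptotic dimension question to the ergodicity hypothesis; once this viewpoint is in place, the martingale convergence input and the path-counting computation are routine.
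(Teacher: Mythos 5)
The paper does not prove Theorem \ref{thm.ergodicmethod} at all: it is quoted as a standard tool of Vershik and Kerov with references to \cite{VK81}, \cite{VK82} and \cite{BO16}. Your reverse-martingale argument is the standard proof of that result and it is correct: centrality does give that, conditionally on $\omega_n=\gamma_n$ (and in fact on the whole tail $\sigma(\omega_n,\omega_{n+1},\dots)$), the initial segment is uniform over the $\dim_\Gamma(\gamma_n)$ paths from the root, so $\bP(\omega_m=\lambda\mid\cT_n)$ is the path-count ratio; Doob's backward martingale theorem gives a.s.\ convergence to the conditional expectation on the tail; and the tail $\sigma$-algebra is contained in the $\sigma$-algebra of sets invariant under finite path modifications, so ergodicity (as defined in Definition \ref{def.centralmeasure}) makes it trivial. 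The countable intersection over $\lambda$ at the end is also the right way to get a single full-measure set $S$.

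One point worth making explicit rather than hiding behind ``up to the normalization convention'': your path count gives
\begin{align*}
\bP(\omega_m=\lambda\mid\omega_n=\gamma_n)=\dim_\Gamma(\lambda)\,\frac{\dim_\Gamma(\lambda,\gamma_n)}{\dim_\Gamma(\gamma_n)},
\end{align*}
so the limit of the ratio $\dim_\Gamma(\lambda,\gamma_n)/\dim_\Gamma(\gamma_n)$ is $\bP(\{\omega_m=\lambda\})/\dim_\Gamma(\lambda)$, not $\bP(\{\omega_m=\lambda\})$ as the theorem literally states. Your version is the correct one: it is exactly the normalization the paper itself uses when it applies the theorem in the proof of Lemma \ref{lem.zeroprobnew}, where the factor $\dim_{\Gamma_B}(n_0,\lambda)$ reappears in front of the limit. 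So trust your computation here; the discrepancy is a slip in the paper's statement, not in your proof.
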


Concerning the minimal boundary, a useful interpretation of the pascalized graph $\cP(\Gamma)$ of which we made abundant use in \cite{Wa20} is the following. Any infinite path on $\cP(\Gamma)$ can be interpreted as an infinite walk on the base graph $\Gamma$ that starts at the root. In the context of \cite{Wa20}, where the underlying base graphs were trees, this allowed us to relate the computation of the minimal boundary of their pascalizations to the theory of random walks on trees. In this article the underlying principal graphs are related to the branching graph of the infinite symmetric group (the Young graph $\Y$) and we will not make use of this interpretation very much but we nevertheless consider it to be a good intuition boost when thinking about pascalized branching graphs.

\subsection{The boundary of the Young graph}

The most prominent and most studied example of a branching graph is the branching graph of the inclusion of symmetric groups $S_1 \subset S_2 \subset S_3 \subset \dots $. It is commonly refered to as the \emph{Young graph} $\Y$ as its $n$-th level vertex set $\Y_n$ consists of all \emph{integer partitions} $\lambda =  (\lambda_1 \geq \lambda_2 \geq \dots \geq \lambda_{l(\lambda)}), \ \lambda_1 + \dots + \lambda_{l(\lambda)} = n, \lambda_i \in \N$ which can be represented graphically as \emph{Young diagrams} and this is how we will be thinking about them. A Young diagram $\lambda \in \Y_n$ is connected to $\mu \in \Y_{n+1}$ if and only if $\mu$ is obtained from $\lambda$ by adding a box and we will write this operation by $\mu = \lambda + \Box$. The minimal boundary of $\Y$ is in one-to-one correspondence with the set of extremal traces on $\C[S_{\infty}]$ and it its concrete description is due to Thoma \cite{Th64}. In addition to Thoma's original one, many more proofs of Thoma's result are known nowadays, see \cite{BuGo15} for a nice overview. To state Thoma's theorem,  we define a \emph{Thoma parameter} to be a collection of nonnegative numbers $(\alpha;\beta) =((\alpha_n)_{n \geq 1};(\beta_n)_{n \geq 1})$ such that
\begin{align*}
\alpha_1 \geq \alpha_2 \geq \dots \geq 0, \quad \beta_1 \geq \beta_2 \geq \dots \geq 0, \quad \text{and } \quad \sum_{n=1}^{\infty} (\alpha_n + \beta_n) \leq 1.
\end{align*}
The set of Thoma parameters is called the \emph{Thoma simplex} and will be denoted by $T$. It is in fact a Choquet simplex with the product topology inherited from the embedding $T \subset [0,1]^{\infty} \times [0,1]^{\infty}$.

\begin{theorem}[Thoma's theorem] \label{thm.Thoma}
The simplex of extremal traces on $\C[S_{\infty}]$ is homeomorphic to the Thoma simplex $T$. The homeomorphism takes the Thoma parameter $(\alpha;\beta)$ to the trace $\tau_{(\alpha;\beta)}$ on $\C[S_{\infty}]$ that is uniquely determined by the following values on $S_{\infty}$:
\begin{align*}
\tau_{(\alpha;\beta)}(\sigma) = \prod_{c \text{ cycle of } \sigma} \sum_{i=1}^{\infty} \alpha_i^{l(c)} + (-1)^{l(c)-1} \sum_{i=1}^{\infty} \beta_i^{l(c)}, \quad \sigma \in S_{\infty}\backslash \{e \}.
\end{align*}
Here $l(c)$ denotes the length of a cyclic permutation $c$, that is the minimal number of transpositions needed to generate $c$, e.g. $l((1 \ 2 \ 3)) = 2$.
\end{theorem}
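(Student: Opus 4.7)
My plan is to follow the Vershik--Kerov strategy, combining the ergodic method (Theorem \ref{thm.ergodicmethod}) with the classical Edrei--Thoma classification of Schur-positive specializations of the ring $\Lambda$ of symmetric functions. By Theorem \ref{thm.ergodicmethod}, every $\bP \in \partial \Y$ is determined by the limits $M_\bP(\lambda) := \bP(\omega_n = \lambda) = \lim_{N \to \infty} \dim_{\Y}(\lambda, \lambda^{(N)})/\dim_{\Y}(\lambda^{(N)})$ taken along $\bP$-a.e.\ path $(\lambda^{(N)})_N$. Since $\Y$ has simple edges, centrality of $\bP$ is equivalent to harmonicity of $M_\bP$ (i.e.\ $M_\bP(\lambda) = \sum_{\mu : \lambda \nearrow \mu} M_\bP(\mu)$), and extremality of $\bP$ to extremality of $M_\bP$ among the nonnegative harmonic functions with $M_\bP(\emptyset)=1$. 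The boundary problem thus reduces to the purely combinatorial classification of extremal nonnegative harmonic functions on $\Y$.

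Next, I would translate this into symmetric function theory. Sending a harmonic $M$ to the linear functional $\Phi_M : \Lambda \to \R$ defined by $\Phi_M(s_\lambda) = M(\lambda)/\dim_{\Y}(\lambda)$, the Pieri rule $s_{(1)} \cdot s_\lambda = \sum_{\mu : \lambda \nearrow \mu} s_\mu$ turns the harmonicity of $M$ into the identity $\Phi_M(s_{(1)} \cdot f) = \Phi_M(f)$ for $f \in \Lambda$. A standard shift-operator argument (the natural $\Lambda$-action on harmonic functions must act by scalars on extreme points) then shows that extremal $M$ correspond bijectively to \emph{algebra} homomorphisms $\Phi : \Lambda \to \R$ that are nonnegative on every Schur function $s_\lambda$. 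The task reduces to classifying all such Schur-positive unital homomorphisms $\Lambda \to \R$.

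Finally, I would invoke the Edrei--Thoma theorem: the Schur-positive unital algebra homomorphisms $\Phi : \Lambda \to \R$ are in bijection with $(\alpha;\beta) \in T$ via the power-sum specialization $\Phi_{(\alpha;\beta)}(p_k) = \sum_i \alpha_i^k + (-1)^{k-1}\sum_i \beta_i^k$ for $k \geq 2$, with $\Phi_{(\alpha;\beta)}(p_1) = 1$. This is the main obstacle of the argument, classically established either through the Aissen--Edrei--Schoenberg--Whitney factorization of totally positive sequences applied to the generating function $\sum_k \Phi(e_k) t^k$, or through the asymptotic analysis of normalized characters of $S_n$ as in \cite{VK81} \cite{VK82}. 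Granting this classification, the trace formula follows from Theorem \ref{thm.boundarytraces} combined with the Frobenius identity $p_\mu = \sum_\lambda \chi^\lambda(\sigma) s_\lambda$ (for $\sigma$ of cycle type $\mu$): using multiplicativity of $\Phi_{(\alpha;\beta)}$ one computes
\[
\tau_{(\alpha;\beta)}(\sigma) = \sum_{\lambda \in \Y_n} M_\bP(\lambda) \frac{\chi^\lambda(\sigma)}{\dim_{\Y}(\lambda)} = \Phi_{(\alpha;\beta)}(p_\mu) = \prod_k \Phi_{(\alpha;\beta)}(p_k)^{m_k(\mu)},
\]
and since $\Phi_{(\alpha;\beta)}(p_1) = 1$, only the non-trivial cycles of $\sigma$ contribute, which reproduces the stated product over cycles.
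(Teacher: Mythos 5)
The paper does not actually prove this statement: Theorem \ref{thm.Thoma} is quoted as a classical result, with \cite{Th64} for the original proof and \cite{BuGo15} for a survey of the many known proofs, so there is no internal argument to compare yours against. Your outline is the standard Vershik--Kerov route (essentially the one presented in \cite{BO16}): the ergodic method, the translation of central measures into Schur-positive normalized functionals on the ring of symmetric functions, the ring theorem identifying extreme points with multiplicative functionals, and the Edrei--Thoma/Aissen--Edrei--Schoenberg--Whitney classification of totally positive sequences. As an architecture this is correct, and you are appropriately explicit that the two genuinely hard inputs --- the ring theorem and the total-positivity classification --- are invoked rather than proved; both are classical and citable, which is acceptable for a result the paper itself treats as known.

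Two points should be repaired. First, a normalization slip: with $M_{\bP}(\lambda):=\bP(\omega_n=\lambda)$, the identity $M_{\bP}(\lambda)=\sum_{\mu:\lambda\nearrow\mu}M_{\bP}(\mu)$ is false (the left-hand side sums to $1$ over each level while each $\mu$ covers several $\lambda$); the harmonic object is $\varphi(\lambda)=M_{\bP}(\lambda)/\dim_{\Y}(\lambda)$, which is also what the ergodic limit $\lim_N \dim_{\Y}(\lambda,\lambda^{(N)})/\dim_{\Y}(\lambda^{(N)})$ actually computes. Your second paragraph silently switches to the correct normalization via $\Phi_M(s_\lambda)=M(\lambda)/\dim_{\Y}(\lambda)$, so the remainder of the argument is sound, but the first paragraph is inconsistent with it. Second, your final computation correctly produces the exponent equal to the \emph{cardinality} of each cycle (the degree of the power sum $p_k$), with fixed points dropping out because $\Phi_{(\alpha;\beta)}(p_1)=1$. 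This is the standard form of Thoma's formula, and it does not match the statement's convention $l(c)=$ minimal number of transpositions (so that a $k$-cycle has $l(c)=k-1$): taken literally, the displayed formula would give $\sum_i\alpha_i+\sum_i\beta_i$ on a transposition rather than the correct $\sum_i\alpha_i^2-\sum_i\beta_i^2$. Your derivation is the correct one; the exponent in the theorem should be the cycle's cardinality, i.e.\ $l(c)+1$ in the stated convention.
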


\section{Preliminaries on diagram algebras}

The branching graphs to be discussed in this article will always be derived from towers of finite-dimensional diagram algebras whose basis will be given by \emph{partition diagrams}. These algebras originate from Schur-Weyl duality of compact groups and have been used in \cite{BS09} by Banica and Speicher to introduce a class of compact quantum groups which they called \emph{easy}. Easy (a.k.a. partition) quantum groups have the subject of intense study ever since, see e.g \cite{We13} \cite{RaWe16}, and three important subclasses have been classified: the \emph{free} easy quantum groups, the \emph{half-liberated} easy quantum groups and the easy (classical) groups. Our focus will mostly lie on the diagram algebras associated to the easy classical groups although we will also discuss a result on the halfliberated orthogonal group in Section \ref{sec.halfliborth}. The diagram algebras associated to the free easy quantum groups have been treated in \cite{Wa20}. 

\subsection{Categories of partitions and their diagram algebras}

By a set partition with $k$ upper and $l$ lower points, we mean a decomposition of the set
$\{1,2,\dots,k,1',\dots,l'\}$ into disjoint subsets (the \emph{blocks} of the set partition) whose union is the full set $\{1,2,\dots,k,1',\dots,l'\}$. A diagrammatic depiction of a set partition can be found in Figure \ref{fig.partition}. The set of all set partitions with $k$ upper and $l$ lower points will be denoted by $\Part(k,l)$ and the set of all non-crossing (or planar) set partitions  with $k$ upper and $l$ lower points will be denoted by $\mathrm{NC}(k,l)$. Whenever the context allows it, we will refer to set partitions simply as partitions since this is the more common terminology. However, since \emph{integer partitions} will also play a role in this article, we will use the term set partitions whenever there is danger of confusion. 

\begin{figure}[h!]
\begin{center}
\includegraphics[scale=0.3]{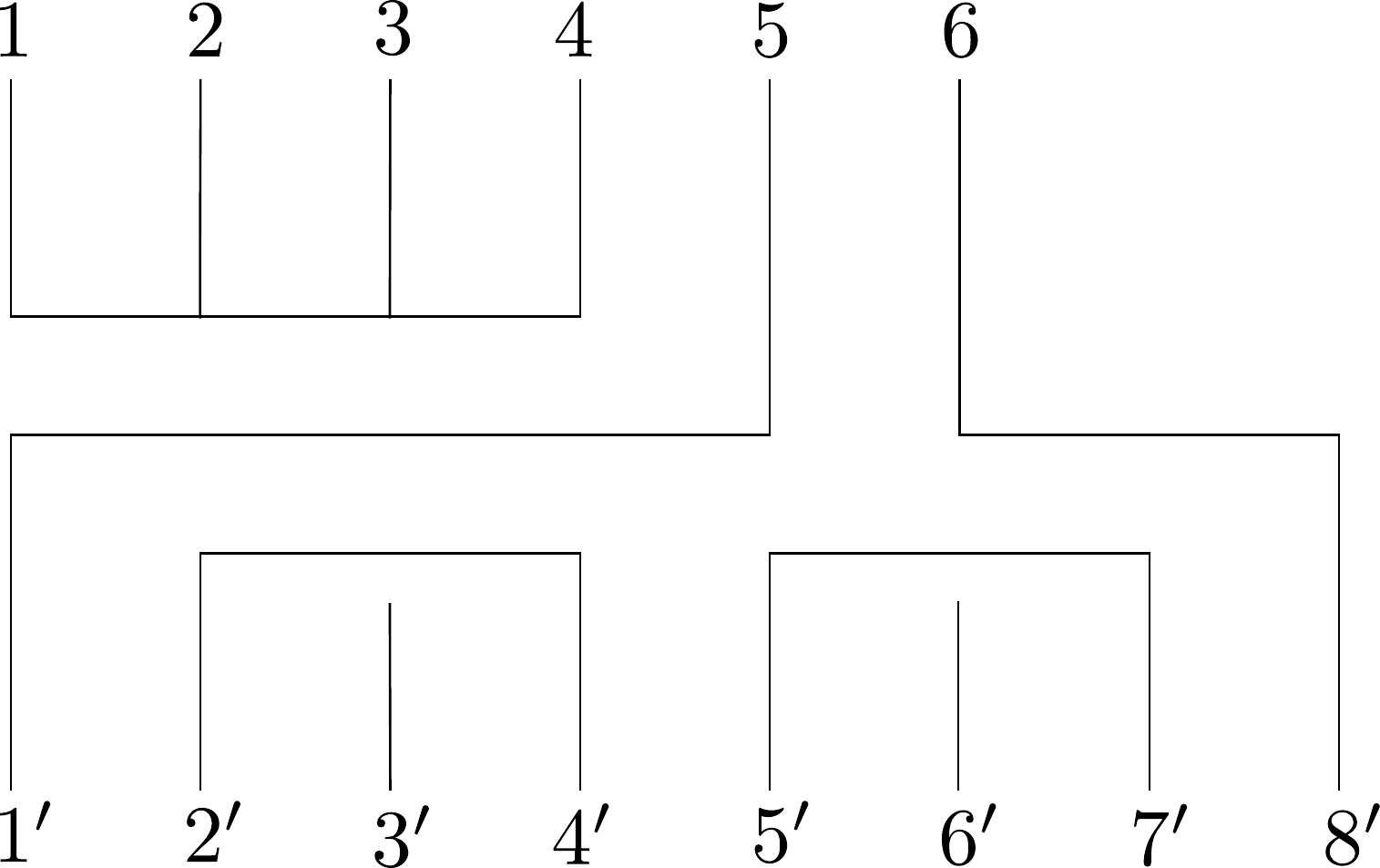}
\end{center}
\caption{\label{fig.partition} A noncrossing set partition with $6$ upper and $8$ lower points.}
\end{figure}

\begin{definition}
A \emph{category of partitions} $\cC$ is a collection $(\cC(k,l))_{k,l \in \N}$ of subsets $\cC(k,l) \subset \Part(k,l)$ such that
\begin{itemize}
\item $\cC(1,1)$ contains the identity partition that connects the upper and the lower point.
\item the family is invariant under the following category operations: tensor product (i.e. horizontal concatenation of diagrams), rotation, involution (i.e. reflecting a diagram along a horizontal line in the middle) and composition (i.e. vertical concatenation of compatible partitions $p_1 \in \cC(k,l), \ p_2 \in \cC(l,m)$). See \cite[Definition 1.4]{We13} for more details on these operations.
\end{itemize} 
\end{definition}  

The $k$-th diagram algebra $A_{(\cC,\delta)}(k)$ of the category of partitions $(\cC,\delta)$ with loop parameter $\delta$ will be the free vector space with basis $\cC(k,k)$. Multiplication of two basis vectors $e_{p_1}, e_{p_2}$ indexed by partitions $p_1$ and $p_2$ is implemented by the composition of partitions mentioned above. A diagramatic description of this operation is as follows: one draws $p_2$ on top of $p_1$ and connects blocks of $p_1$ and $p_2$ that meet in the middle. Upper points of $p_2$ and lower points of $p_1$ that are now connected to each other form a block of the partition $p_1 \cdot p_2$. We also erase closed loops that appear in the middle of the picture that do not connect to any upper or lower points. In the algebra $A_{(\cC,\delta)}(k)$ we then multiply by the loop parameter $\delta$ for each such loop. In other words, if the diagram obtained after erasing loops is $p_1 \cdot p_2$, then 
\[ e_{p_1}\cdot e_{p_2} = \delta^{\# erased \ loops} e_{p_1 \cdot p_2}. \]
As mentioned above, the involution $p^*$ of a diagram $p$ is obtained by reflecting $p$ along a horizontal line in the middle, whence we get an involution $e_p^* = e_{p^*}$ on $A_{(\cC,\delta)}(k)$. 

Recall that a finite-dimensional algebra $A$ is semisimple if and only if it possesses a \emph{positive} involution, i.e. a conjugate linear map $*:A \to A, x \mapsto x^*$ such that $(x^*)^* = x$, for which $x^*x=0$ implies $x=0$ (positivity), see e.g. \cite[Appendix II]{GHJ89}. In addition, if there is a positive involution on $A$, there is also a unique $C^*$-norm on $A$ turning it into a $C^*$-algebra. Lastly, if $A \subset B$ is an inclusion of finite-dimensional semisimple algebras, for any positive involution on $A$, there is a positive involution on $B$ extending it. Therefore, when given an inductive sequence of finite-dimensional semisimple algebras, we can always consider its inductive limit in the category of $C^*$-algebras. 

\begin{definition}
Let $(\cC,\delta)$ be a category of partitions at loop parameter $\delta$ and assume that for all $k\geq 1$, $A_{(\cC,\delta)}(k)$ is semisimple.  Let $\alpha_k: A_{(\cC,\delta)}(k) \to A_{(\cC,\delta)}(k+1)$ be the embedding obtained by mapping $e_p \to e_{p'}$, where $p'$ is obtained from $p$ by adding a through-string on the right of the diagram. The inductive limit (in the category of $C^*$-algebras) w.r.t. these embeddings will be denoted $A_{(\cC,\delta)}(\infty)$.   
\end{definition}

Note that the set $\cC(\infty):= \bigcup_{k \geq 1} \cC(k,k)$ constitutes a natural basis for $A_{(\cC,\delta)}(\infty)$.

\subsection{Diagram algebras with a crossing and easy groups} \label{sec.groups}
In \cite{BS09}, Banica and Speicher used the Tannaka-Krein duality theorem of Woronowicz \cite{Wo88} to construct for every category of partition $\cC$ at loop parameter $\delta = n$ a compact quantum group $\G(\cC,n)$ generated by a corepresentation $u$ of dimension $n$, whose endomorphism algebra $\End(u^{\ot k}) \subset B((\C^n)^{\ot k})$ is the image of a representation of $A_{(\cC,\delta)}(k)$ on $(\C^n)^{\ot k}$. See for instance the textbook \cite{Ti08} for further information on compact quantum groups.
The first class of partition quantum groups considered in \cite{BS09} is the class of categories of partitions containing the simple crossing partitions $\{\{1,2'\}, \{ 2,1' \}\} \in \cC(2,2)$. These are the categories of partitions whose associated partition quantum groups are proper groups. Our goal in this paragraph is to classify the traces on the inductive limit algebra $A_{(\cC,\delta)}(\infty)$ for the six categories with the simple crossing found in \cite[Section 2]{BS09}.  

\begin{theorem} \label{thm.classeasygroups}
There are exactly six categories of partitions containing the simple crossing, namely
\begin{itemize}
\item the category $\mathcal{S}$ of all partitions;
\item the category $\mathcal{O}$ generated by all pair partitions (i.e. partitions with blocks of size two);
\item the category $\mathcal{H}$ generated by all partitions with blocks of even size;
\item the category $\mathcal{B}$ generated by all partitions with blocks of size one or two;
\item the category $\mathcal{S}'$ generated by all partitions with an even number of blocks of odd size;
\item the category $\mathcal{B}'$ generated by all partitions with any number of blocks of size two and an even number of blocks of size one;
\end{itemize}
 At the loop parameter value $\delta = n$, the corresponding partition groups are (in the same order as above) the symmetric group $S_n$, the orthogonal group $O_n$,  the hyperoctahedral group $H_n$, the bistochastic group $B_n$, the group $S'_n = \Z_2 \times S_n$, and the group $B'_n = \Z_2 \times B_n$.
\end{theorem}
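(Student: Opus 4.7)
The plan is to first use the simple crossing to reduce the classification problem to a combinatorial question about block-size data, and then match each resulting category with a classical group via Tannaka--Krein duality. The key initial observation is that, since the simple crossing lies in $\cC$, the category $\cC$ is stable under arbitrary permutations of the upper points and of the lower points of any diagram. Consequently, membership of a partition $p$ in $\cC$ depends only on its multiset of block sizes, together with possibly one global parity invariant; the precise combinatorial shape of $p$ is irrelevant.

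First, I would show that $\cC$ is generated (over the identity and the simple crossing) by its ``single-block'' partitions $b_m$ (the partition in $\Part(0,m)$ with exactly one block of size $m$). Composing two such building blocks along $j$ matched strings produces $b_{m+k-2j}$ up to a power of $\delta$, which gives an explicit propagation rule: if $m, k \in S(\cC) := \{ s \in \N \,:\, b_s \in \cC \}$, then every $s$ with $|m-k| \leq s \leq m+k$ and $s \equiv m+k \bmod 2$ belongs to $S(\cC)$ as well. A direct case analysis based on this rule forces $S(\cC)$ to be one of $\{2\}$, $\{1,2\}$, $2\N$, or $\N_{\geq 1}$, yielding the four ``primitive'' candidates $\cO, \cB, \cH, \cS$.

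Second, to obtain the two additional primed categories I would track a second invariant: the parity of the number of blocks of odd size of a partition $p \in \cC$. This parity is preserved under all category operations provided $S(\cC) \subset \{1,2\}$ or $S(\cC) = \N_{\geq 1}$, while for $S(\cC) = \{2\}$ or $S(\cC) = 2\N$ every diagram in $\cC$ has zero blocks of odd size, so no primed variant can arise. This produces the two extra categories $\cS'$ and $\cB'$, completing the list of six. Third, for the group identification at $\delta = n$, I would invoke the Tannaka--Krein duality of Woronowicz: the quantum group $\G(\cC,n)$ is determined by the intertwiner spaces $\End(u^{\otimes k})$, which are the images of $A_{(\cC,n)}(k)$ on $(\C^n)^{\otimes k}$. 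The cases $\cO$ and $\cS$ are the classical computations of Brauer and Jones; for $\cH$ and $\cB$ one directly identifies the $H_n$- and $B_n$-invariants in $(\C^n)^{\otimes k}$ and verifies that the generating partitions (e.g.\ the singleton $b_1$, which produces the all-ones $B_n$-invariant vector) yield precisely these invariants; the primed cases follow from the observation that the extra $\Z_2$ factor in $\Z_2 \times S_n$ and $\Z_2 \times B_n$ acts by a global sign on the defining representation, and its invariant tensors are exactly characterized by the even-odd-blocks parity condition.

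The main obstacle is the combinatorial exhaustion in the second step: rigorously excluding all other possible constraints on block sizes and inter-block couplings, in particular mod-$k$ conditions for $k \geq 3$ or subtler constraints that tie several blocks together. This requires systematically verifying that any such candidate constraint is destroyed by a specific composition with the simple crossing and a suitable block partition, forcing the category to collapse into one of the six listed above. Once the six categories are isolated, the group identification reduces to well-known invariant-theoretic bookkeeping in classical representation theory.
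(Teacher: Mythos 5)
First, a point of calibration: the paper does not prove this statement at all --- it is quoted verbatim from Banica--Speicher \cite{BS09}, where the classification of categories of partitions containing the simple crossing is carried out. So your sketch has to be judged against the argument in \cite{BS09}, whose overall strategy (use the crossing to reduce membership to block-size and parity data, then identify the groups via Tannaka--Krein) your proposal does broadly share.

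However, there is a genuine gap in your combinatorial reduction. Your first step asserts that $\cC$ is generated, over the identity and the crossing, by its single-block partitions $b_m$, and that the resulting set $S(\cC)=\{s : b_s\in\cC\}$ essentially pins down $\cC$. This is false, and the two primed categories are exactly the counterexamples: a single block of odd size has an odd number of odd blocks, so $b_m\notin\cS'$ for $m$ odd and $b_1\notin\cB'$; hence $S(\cS')=2\N=S(\cH)$ and $S(\cB')=\{2\}=S(\cO)$, while $\cS'\supsetneq\cH$ and $\cB'\supsetneq\cO$ (both primed categories contain the double singleton $b_1\ot b_1\in\Part(0,2)$, which neither $\cH$ nor $\cO$ contains). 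So the single-block data neither generates nor separates the six categories. Your second step then compounds this: you claim that for $S(\cC)=\{2\}$ or $S(\cC)=2\N$ ``no primed variant can arise,'' but these are precisely the values of $S(\cC)$ realized by $\cB'$ and $\cS'$. The parity of the number of odd blocks is indeed the right second invariant, but it must be attached to the branches $S(\cC)=\{2\}$ and $S(\cC)=2\N$ (distinguishing $\cO$ from $\cB'$ and $\cH$ from $\cS'$ via whether the double singleton lies in $\cC$), not to the branches you list. A smaller slip: composing $b_m$ and $b_k$ along $j\geq 1$ strings yields $b_{m+k-2j}$ only for $j\geq 1$, so the propagation rule gives $|m-k|\leq s\leq m+k-2$, not $s\leq m+k$ (indeed $b_4\notin\cO$ although $2\in S(\cO)$). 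Finally, the exhaustion you flag as the ``main obstacle'' --- ruling out finer constraints than block sizes plus one parity --- is the actual content of the theorem and is left entirely open in your sketch; in \cite{BS09} this is handled by a careful case analysis on which of a short list of test partitions (singleton, double singleton, four-block, etc.) the category contains. The group identifications in your third step are fine in outline.
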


As mentioned in the introduction, the categories $\mathcal{S}$ and $\mathcal{S}'$ yield the same diagram algebras since $\mathcal{S}(k,k) = \mathcal{S}'(k,k)$ for all $k \geq 1$ (and the same is true for $\mathcal{B}$ and $\mathcal{B}'$). The algebras $A_{(\mathcal{S},n)}(k)$, $A_{(\mathcal{O},n)}(k)$, $A_{(\mathcal{B},n)}(k)$ are commonly known in the algebraic literature as the \emph{partition algebra}, the \emph{Brauer algebra} and the \emph{rook-Brauer algebra} respectively, see e.g \cite{MaS94} \cite{Ma96} \cite{Ma00} \cite{Jo94}\cite{Wen88} \cite{dMH13}. 

At loop parameter $\delta=n \in \N$, the representations $A_{(\cC,n)}(k) \to B((\C^n)^k)$ used in \cite{BS09} to describe the intertwiner spaces of the above categories are not injective when $k > n$. Also, in this case, $A_{(\cC,n)}(k)$ is not semisimple and there is no inductive limit algebra  to study. However, we can avoid this issue by adjusting the loop parameter. For most other values of the loop parameter, semisimplicity of the algebras $A_{(\cC,n)}(k), \ k\geq 1$ is guaranteed by the following theorem, see e.g. \cite[Theorem 5.13(a)]{HR05}.

\begin{theorem} \label{thm.semisimplicity}
Let $A_{\delta}, \ \delta \in \C$ be a family of algebras generated by generators and relations such that the coefficients of the relations are polynomials in $\delta$. Assume further that for some $\delta_0$, $A_{\delta_0}$ is semisimple. Then $A_{\delta}$ is semisimple for all but finitely many values of $\delta$.
\end{theorem}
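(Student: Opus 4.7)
The plan is to reduce semisimplicity to the non-vanishing of a single polynomial in $\delta$. Recall that over a field of characteristic zero, a finite-dimensional associative algebra $A$ with identity is semisimple if and only if the bilinear trace form $(a,b) \mapsto \tr(L_{ab})$ associated to the regular representation (here $L_a$ denotes left multiplication by $a$) is non-degenerate; indeed, the radical of this form coincides with the Jacobson radical, which is nilpotent and hence annihilated by $\tr \circ L$.

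First I would fix a basis $\{e_1, \dots, e_N\}$ of $A_\delta$ whose cardinality $N$ is independent of $\delta$. In all the concrete examples of interest --- in particular the diagram algebras $A_{(\cC,\delta)}(k)$ of this article, which possess the common diagram basis $\cC(k,k)$ --- this is immediate, and the structure constants $c_{ij}^k \in \C[\delta]$ defined by $e_i e_j = \sum_k c_{ij}^k(\delta)\,e_k$ are polynomials in $\delta$ by the very assumption on the defining relations.

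Next I would write out the Gram matrix $G(\delta)$ of the trace form in this basis. Its entries are $G(\delta)_{ij} = \tr(L_{e_i e_j}) = \sum_{k,m} c_{ij}^k(\delta)\, c_{km}^m(\delta)$, which are all polynomials in $\delta$. Consequently $\det G(\delta) \in \C[\delta]$. By hypothesis $A_{\delta_0}$ is semisimple, so $\det G(\delta_0) \neq 0$; therefore $\det G(\delta)$ is a non-zero polynomial and vanishes at only finitely many values of $\delta \in \C$. For every other $\delta$ the trace form is non-degenerate, hence $A_\delta$ is semisimple.

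The delicate point --- and the reason one must include some constant-dimension hypothesis --- is precisely the existence of a $\delta$-independent basis. In full generality one would start from the $\C[\delta]$-algebra $\tilde A$ presented by the relations and would first have to show that $\tilde A$ is free of constant rank as a $\C[\delta]$-module (or at least so on a Zariski-open subset of $\C$), since at exceptional values of $\delta$ the dimension could in principle drop and semisimplicity fail for reasons unrelated to the trace form. In the diagram algebra setting this issue is absent, so the argument above applies without modification and yields the theorem.
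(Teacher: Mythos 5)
Your argument is correct, and it is worth noting that the paper itself gives no proof of this statement at all: it is quoted as a known result with a pointer to \cite[Theorem 5.13(a)]{HR05}, so there is nothing internal to compare against. What you have written is the standard Tits-deformation-type argument that underlies results of this kind: semisimplicity in characteristic zero is equivalent to non-degeneracy of the trace form of the regular representation (the radical of that form contains the Jacobson radical because the latter is nilpotent, and by Dieudonn\'e's theorem the radical of the form is itself a nilpotent ideal, so the two coincide), the Gram determinant $\det G(\delta)$ is a polynomial once one has a $\delta$-independent basis with polynomial structure constants, and non-vanishing at $\delta_0$ forces non-vanishing off a finite set. Your computation $G(\delta)_{ij}=\sum_{k,m}c_{ij}^k(\delta)c_{km}^m(\delta)$ is right. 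You are also correct to flag that the theorem as literally stated is too loose: ``generators and relations with polynomial coefficients'' does not by itself guarantee a basis of constant cardinality, and without that the discriminant argument does not even get off the ground; this hypothesis is satisfied for all the diagram algebras in the paper, which carry the fixed diagram basis $\cC(k,k)$, so your proof covers every use the paper makes of the theorem.
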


We will not go into detail concerning the generators and relations of the diagram algebras considered here as they can be found in the literature referenced above. For $A_{(\mathcal{H},n)}(k)$, the $k$-th centralizer of the standard representation of the hyperoctahedral group, generators and relations have been found in \cite{Or05}.

\begin{definition}
We will call the loop parameter $\delta \in \C$ \emph{generic} for $\cC$ if for all $k\geq 1$, the algebra $A_{(\cC,\delta)}(k)$ is semisimple.
\end{definition}


\begin{remark} \label{rem.genericparameter}
The exact set of generic parameter $\mathcal{G}_{\cC}$ values has been determined for all of the above algebras except for the rook-Brauer algebra. Namely, $\cG_{\mathcal O} = \C \backslash \Z$ \cite{Wen88}, $\cG_{\mathcal S} = \C \backslash \N$  \cite{MaS94} and $\cG_{\mathcal H} = \C \backslash \N$ \cite{FM20}. 
\end{remark}

For the rest of this article, we will assume any loop parameter $\delta$ to be generic unless otherwise stated. Let $S_{\infty}= \bigcup_{n\geq 1} S_n$ denote the infinite permutation group and let $\sigma_l$ denote the transposition $(l \ l+1).$

\begin{lemma} \label{lem.symquotient}
Let $\cC$ be a category of partitions containing the simple crossing $p_c$ at generic parameter $\delta \in \C$. The subspace $I_k$ spanned by all noninvertible partitions in $\cC(k,k)$ is a (two-sided) ideal in $A_{(\cC,\delta)}(k)$ for $k=1,2,\dots, \infty$. Moreover, if $q_l$ denotes the partition that places $p_c$ on the upper and lower points $l,l+1$, then
\begin{align*}
\C[S_{k}] \to A_{(\cC,\delta)}(k)/I_k, \quad \sigma_l \mapsto q_l + I_k, \quad l <k
\end{align*}
is an isomorphism of semisimple algebras for $k=1,2,\dots, \infty$.
\end{lemma}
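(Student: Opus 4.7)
The plan is to first identify the noninvertible partitions concretely: a partition $p \in \cC(k,k)$ should be considered invertible precisely when it has $k$ \emph{through-strings} (blocks meeting both the upper and lower row). Since the $k$ upper and $k$ lower points would then be distributed across $k$ blocks each containing at least one of each, every block would contain exactly one upper and one lower point, so $p$ would be (the diagram of) a permutation of $\{1,\dots,k\}$. Conversely, every permutation diagram is invertible with inverse given by the reflected (i.e. inverse) permutation. So $I_k$ is the span of all partitions in $\cC(k,k)$ with strictly fewer than $k$ through-strings.

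The first substantive step is to verify that $I_k$ is a two-sided ideal. The key combinatorial observation is that the composition operation on diagrams is subadditive with respect to the through-string count: if the diagram $p_1 \cdot p_2$ is obtained by stacking $p_2$ over $p_1$ and contracting the middle, then any through-block of $p_1 \cdot p_2$ must arise from concatenating a through-block of $p_1$ with a through-block of $p_2$ through one or more merges in the middle row. Consequently the number of through-strings of $p_1 \cdot p_2$ is bounded by $\min(\#\text{through}(p_1), \#\text{through}(p_2))$, and this bound is unaffected by erasing middle loops and rescaling by powers of $\delta$. Thus whenever one of $p_1, p_2$ lies in $I_k$, so does $p_1 \cdot p_2$.

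The second step is to describe the quotient. By the preceding, a basis of $A_{(\cC,\delta)}(k)/I_k$ is indexed by the invertible partitions in $\cC(k,k)$, i.e. by those permutation diagrams that actually belong to $\cC$. Since $\cC$ contains the identity partition in $\cC(1,1)$ and the simple crossing, taking horizontal tensor products of these two building blocks yields all $q_l$ for $l < k$, and composing them generates every permutation diagram (by the Coxeter generation of $S_k$). Hence all $k!$ permutation diagrams lie in $\cC(k,k)$, and the quotient has dimension $k!$. Moreover, stacking two permutation diagrams produces no closed loops in the middle row and returns another permutation diagram corresponding to the product in $S_k$, so multiplication in the quotient matches group multiplication in $S_k$ exactly (the loop parameter $\delta$ plays no role here). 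This yields a surjective algebra map $\C[S_k] \to A_{(\cC,\delta)}(k)/I_k$ sending $\sigma_l \mapsto q_l + I_k$, and a dimension count makes it an isomorphism. The $k=\infty$ case follows by passing to the inductive limit, and semisimplicity of $\C[S_\infty]$ is standard.

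The main technical point is the subadditivity of through-strings under composition that underpins the ideal property; once that is clearly established, the rest is bookkeeping with the easy observations that (i) $\cC$ contains all permutations because the simple crossing plus the identity generate $S_k$ under tensor and composition, and (ii) composition of permutation diagrams never creates loops and so does not invoke the parameter $\delta$. A small side remark: the statement should presumably be read with the convention that $A_{(\cC,\delta)}(0) \subset A_{(\cC,\delta)}(1) \subset \cdots$ starts at $k=1$ in the displayed map, since $S_k$ is generated by $\sigma_l, l<k$, only for $k\geq 1$.
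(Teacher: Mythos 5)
Your proposal is correct and follows essentially the same route as the paper's own (much terser) proof: identify the invertible diagrams as the permutation diagrams, use the fact that composing with a noninvertible diagram cannot produce an invertible one to get the ideal property, and then check that the quotient multiplies like $\C[S_k]$. The only difference is that you spell out the subadditivity of through-strings and the dimension count, which the paper leaves as "readily checked."
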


\begin{proof}
A partition $p \in \cC(k,k)$ is invertible if and only if every block contains exactly one upper and exactly one lower point. From this it follows that the products $pq, qp$ of a noninvertible partition $q$ with any other partition $p$ remains noninvertible and we can expand this argument linearly to arbitrary elements $q\in I_k, \ p \in A_{(\cC,\delta)}(k)$, so that $I_k$ is a two-sided ideal. The isomorphism of the quotient and $\C[S_{k}]$ is then readily checked.  
\end{proof}

When $\cC = \cC_O$, we will write $\Br_{\delta}(\infty) := A_{(\mathcal O,\delta)}(\infty)$ for the \emph{infinite Brauer algebra} at parameter $\delta$. Similarly, when $\cC = \cC_S$, we write $\mathrm{P}_{\delta}(\infty) := A_{(\mathcal S,\delta)}(\infty)$ for the \emph{infinite partition algebra} and $\mathrm{rBr}_{\delta}(\infty) := A_{(\mathcal B,\delta)}(\infty)$ fo the \emph{infinite rook-Brauer algebra} at parameter $\delta$. 
The Bratelli diagram of $\Br_{\delta}(\infty)$ was computed in \cite[Theorem 3.2]{Wen88} and that of $\Part_{\delta}(\infty)$ was computed in the works of P. Martin \cite{Ma96}, \cite{Ma00}. It was pointed out in \cite{VN06} that the branching graph of $\Br_{\delta}(\infty)$ is the pascalization of the \emph{Young graph} $\mathbb{Y}$ (i.e. the branching graph of $S_{ \infty}$). A similar observation is made for the branching graph of $\mathrm{P}_{\delta}(\infty)$: it is the pascalization of a slight alteration $\bar{\bar{\mathbb{Y}}}$ of the Young graph in which every level of $\mathbb{Y}$ is repeated two times instead of just once. 

From these observations, the following is deduced in \cite[Corollary 2.12]{VN06}. 

\begin{theorem} \label{thm.Brauerpartition}
Every trace $\tau$ on $ \Br_{\delta}(\infty)$, respectively $\mathrm{P}_{\delta}(\infty)$, is the lift of a trace $\hat{\tau}$ on the quotient $\C[S_{\infty}]$. In particular, the simplex of extremal traces on these algebras is homeomorphic to the Thoma simplex $T$. 
\end{theorem}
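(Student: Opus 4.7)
The plan is to use the Vershik--Kerov ergodic method to compute the minimal boundary of the pascalized Young graph $\cP(\Y)$ and show it coincides with the boundary of $\Y$ itself. By Theorem \ref{thm.boundarytraces}, tracial states on $\Br_\delta(\infty)$ are in bijection with central measures on the path space of $\cP(\Y)$, and analogously for $\mathrm{P}_\delta(\infty)$ with $\cP(\bar{\bar{\Y}})$. Unwinding Lemma \ref{lem.symquotient}, a trace lifts from $\C[S_\infty]$ precisely when it vanishes on every ideal $I_n$; since $I_n$ corresponds to the sum of simple module factors of $\Br_\delta(n)$ indexed by partitions of size strictly less than $n$, this translates to the requirement that the central measure be supported on strictly ascending paths in $\cP(\Y)$, i.e.\ paths $(n,\lambda_n)_n$ with $|\lambda_n| = n$ for all $n$. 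The goal is thus to show that every ergodic central measure has this property, after which the restriction to ascending paths reduces everything to the Young graph $\Y$.

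A direct unwinding of Definition \ref{def.pascalization} gives $\dim_{\cP(\Y)}((n,\lambda))$ as the number of walks on $\Y$ of length $n$ from $\emptyset$ to $\lambda$, with each step adding or removing a single box. The ergodic method (Theorem \ref{thm.ergodicmethod}) reduces the problem to evaluating, for a $\bP$-typical path $(n,\lambda_n)$ and a test vertex $(m,\mu) \in \cP(\Y)_m$, the limit
\begin{align*}
\phi((m,\mu)) = \lim_{n \to \infty} \frac{\dim_{\cP(\Y)}((m,\mu),(n,\lambda_n))}{\dim_{\cP(\Y)}((n,\lambda_n))},
\end{align*}
and then showing that $\phi((m,\mu)) = 0$ whenever $|\mu| < m$. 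The residual function $\phi$ restricted to ascending vertices will then be a harmonic function on $\Y$, which by Thoma's theorem (Theorem \ref{thm.Thoma}) is parametrized by a point of the Thoma simplex $T$.

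The crux---and the main obstacle I anticipate---is the combinatorial estimate underlying the vanishing of $\phi$ on non-ascending vertices. A reflection-principle argument organizes the walks by their pattern of ascents and descents: passing through a non-ascending vertex $(m,\mu)$ forces at least $(m-|\mu|)/2$ descents among the first $m$ steps, which should lead to an asymptotic penalty relative to walks that ascend monotonically for the first $m$ steps. One then needs to combine this with a concentration statement $|\lambda_n|/n \to 1$ for $\bP$-typical paths---itself provable from the same walk count, since walks ending at $\lambda$ with $|\lambda|/n$ bounded away from $1$ are exponentially suppressed compared to the pure-ascent count $\dim_\Y(\lambda)$. Together these give $\phi((m,\mu))=0$ whenever $|\mu|<m$.

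Once the support claim is established, ergodicity of $\bP$ transfers to the induced central measure on $\Y$, and Thoma's theorem identifies the extremal such measures with points of $T$, completing the argument for $\Br_\delta(\infty)$. The case of $\mathrm{P}_\delta(\infty)$ proceeds along the same lines with $\bar{\bar{\Y}}$ in place of $\Y$; the only additional ingredient is the (easier) observation that doubling each level of $\Y$ does not enlarge its minimal boundary, which follows by an essentially identical dimension comparison between the doubled and single graphs.
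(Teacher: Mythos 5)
Your high-level strategy is the right one, and it is the one the paper relies on: the paper does not reprove this theorem but cites \cite[Corollary 2.12]{VN06}, and it carries out the identical scheme in detail for the rook-Brauer algebra (Lemma \ref{lem.zeroprobnew} through Corollary \ref{cor.bistochasticlimit}). Your reduction of the lifting property to the support statement via Lemma \ref{lem.symquotient} and Theorem \ref{thm.boundarytraces} is correct, as is the final passage through Thoma's theorem. The gap is in the one step that carries all the content: the vanishing of $\phi$ on non-ascending vertices. Your proposed route via a reflection principle plus the concentration statement $|\lambda_n|/n\to 1$ is not carried out, and the justification you offer for concentration is false. The number of length-$n$ walks on $\Y$ from $\emptyset$ to $\lambda$ factorizes as $M(n,|\lambda|)\dim_{\Y}(\lambda)$ with $M(n,l)\geq 1$, and for $l$ small this factor is enormous --- e.g.\ $M(2k,0)=(2k-1)!!$ --- so walks ending at diagrams with $|\lambda|/n$ bounded away from $1$ are vastly \emph{more} numerous than pure ascents, not exponentially suppressed relative to $\dim_\Y(\lambda)$. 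There is also a near-circularity: controlling $\bP(\omega_n=\lambda)$ for $|\lambda|<n$ is essentially the statement to be proven, so concentration cannot be used as an independent input without a separate argument.

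The mechanism that actually closes this step (in \cite{VN06} and in the paper's Lemma \ref{lem.zeroprobnew}) is different and worth recording. First, a shift identity: the number of $(n-m)$-step walks from $\mu$ to $\lambda_n$ does not depend on the level at which one places $\mu$, so if $|\mu|<m$ one gets
\begin{align*}
\dim_{\cP(\Y)}\bigl((m,\mu),(n,\lambda_n)\bigr)=\dim_{\cP(\Y)}\bigl((m-2,\mu),(n-2,\lambda_n)\bigr)\leq \dim_{\cP(\Y)}\bigl((n-2,\lambda_n)\bigr),
\end{align*}
which bounds the ergodic-method ratio by $\dim_{\cP(\Y)}((n-2,\lambda_n))/\dim_{\cP(\Y)}((n,\lambda_n))=M(n-2,|\lambda_n|)/M(n,|\lambda_n|)$. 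Second, one proves $\max_{l<n}M(n-2,l)/M(n,l)\to 0$, which requires identifying where the maximum sits (via an explicit binomial relation between $M(n,l)$ and $M(n-l,0)$ and a monotonicity statement) and a superexponential lower bound on $M(2n,0)$ --- precisely the chain of Lemmas \ref{lem.recursionbistochastic}, \ref{lem.bistochasticeasyrec} and Corollary \ref{cor.bistochasticlimit} in the rook-Brauer case. Without this (or an honest substitute), your argument does not go through; everything after the support claim is fine.
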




Our goal is to prove an analogous result for the infinite rook-Brauer algebra and to reduce the analogous statement for the algebra $A_{(\mathcal H,\delta)}(\infty)$ to a simpler one.

\section{The branching graph of the rook-Brauer algebras and its boundary} \label{sec.boundaryrookBrauer}

In this section, we will study the minimal boundary of the branching graph of the tower of rook-Brauer algebras $\mathrm{rBr}_{\delta}(0)= \mathrm{rBr}_{\delta}(1) \subset \dots$ with generic parameter $\delta$. \\
This branching graph, let us call it $\Gamma_B$, was computed in \cite{dMH13} and its construction goes as follows. Denote by $\Y_n, \ n \geq 1$ the set of Young diagrams with $n$ boxes (i.e. the number of integer partitions of $n$ arranged in weakly decreasing order). Let us also add the set $\Y_0 = \{\emptyset\}$ to this list, where $\emptyset$ is the empty partition. 

The $n$-th level vertex set $V_n, \ n \geq 0$ of $\Gamma_B$ is $\bigcup_{k = 0}^n \Y_k $ which we will identify with the set $ V_n = \{n \} \times \bigcup_{k = 0}^n \Y_k$ to better keep track of the level in which a given partition $\lambda \in Y_k$ is considered to be.  There is an edge between $(n,\lambda) \in V_n, \ (n-1,\mu) \in V_{n-1}$ if and only if one of the following three conditions hold: 
\[ \lambda = \mu, \qquad \lambda = \mu + \Box \quad \text{ or } \quad \lambda = \mu - \Box. \]

The first four levels of $\Gamma_B$ are drawn below.

\begin{figure}[h!]
\begin{center}
\includegraphics[scale=0.3]{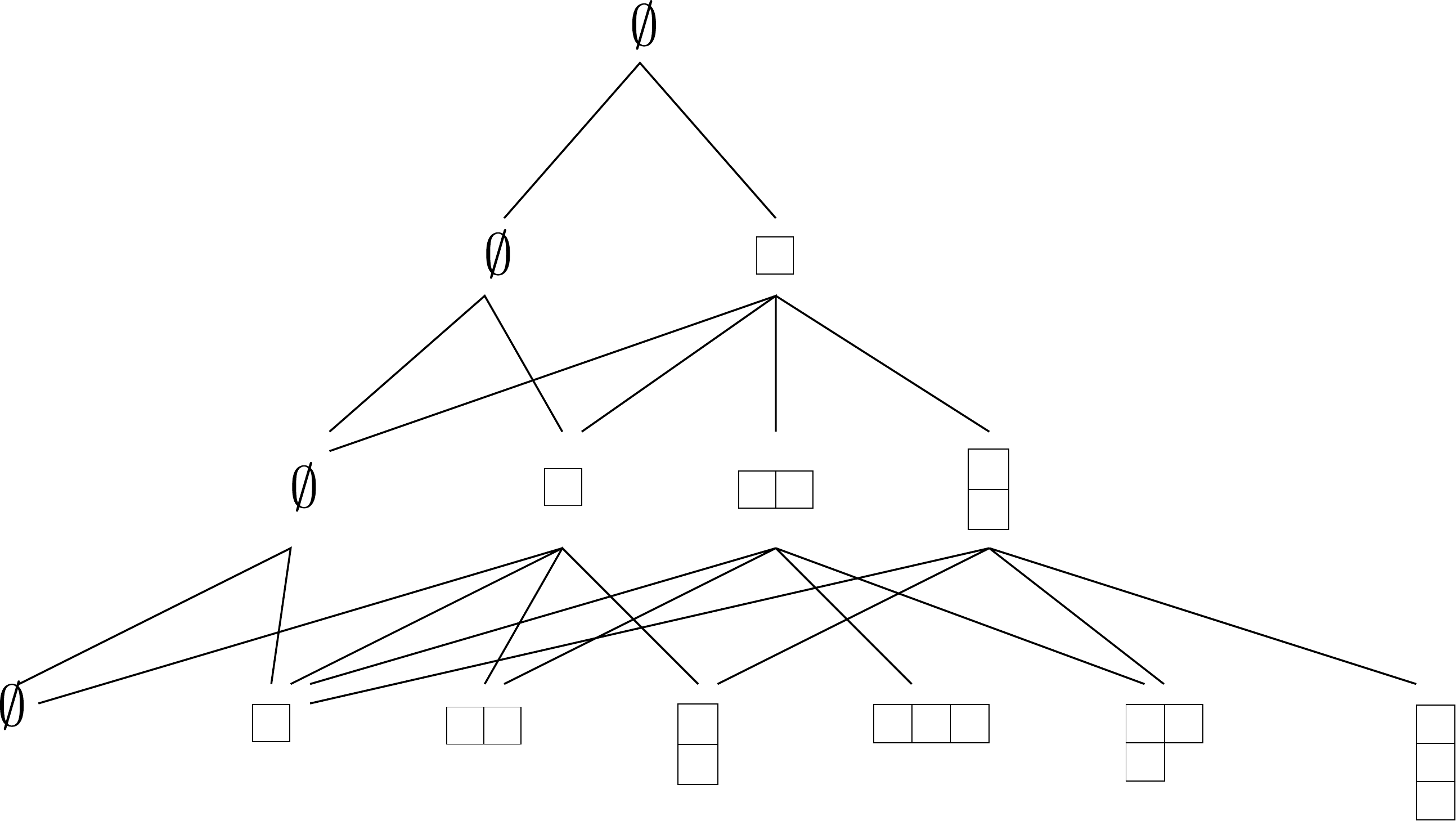}
\end{center}
\caption{\label{fig.branchingbistochastic} First four levels of $\Gamma_B$.}
\end{figure}

 We immediately observe that an infinite path on $\Gamma_B$ that starts at the root exactly corresponds to a walk on the Young graph that at every step is allowed to either jump to a neighboring vertex or to stay in place. This is analogous to the branching graph of the Brauer algebra $\Br(\infty)$ which (as the pascalization of the Young graph) describes walks on $\Y$ that jump to a neighboring vertex at every step (and are thus not allowed to remain in place).  The minimal boundary of $\Gamma_B$ therefore consists of \emph{lazy} random walks on $\Y$ (either jumping or staying) with the  following property: two walks that reach the same Young diagram $\lambda$ after $n$ steps are assigned the same probability.

By construction of the graph $\Gamma_B$, it follows that we can identify the minimal boundary $(\Omega_{\Y},\cF_{\Y})$ of the Young graph with a Borel subspace of $(\Omega_{\Gamma_B},\cF_{\Gamma_B})$. More precisely, we identify the infinite paths on $\Y$ with \emph{deterministic} walks on $\Y$, i.e. infinite paths on $\Gamma_B$ that only use the forward move $\lambda \nearrow \lambda + \Box $.

The rest of this section will be devoted to the proof the following theorem. 

\begin{theorem} \label{thm.centralmeasuresbistochastic}
Any central measure on $(\Omega_{\Gamma_B},\cF_{\Gamma_B})$ is fully supported on the Borel subspace $(\Omega_{\Y},\cF_{\Y})$.
\end{theorem}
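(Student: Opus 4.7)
The plan is to apply the Vershik--Kerov ergodic method (Theorem~\ref{thm.ergodicmethod}) combined with explicit asymptotic estimates of the dimension function on $\Gamma_B$. By the Choquet decomposition of the simplex of central measures, it suffices to treat the case when $\bP$ is ergodic. Identifying $\Omega_\Y$ with the subset of $\Omega_{\Gamma_B}$ consisting of paths $(\gamma_n) = ((n, \lambda_n))$ satisfying $|\lambda_n| = n$ for every $n$, the theorem becomes equivalent to the assertion that $\bP(\omega_{m_0} = (m_0, \lambda_0)) = 0$ for every $m_0 \geq 1$ and every $\lambda_0 \in \Y_{k_0}$ with $k_0 < m_0$.

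My first step is to derive an exact formula for $\dim_{\Gamma_B}$. By a straightforward induction on $n$ using the three-term recursion of $\Gamma_B$ (the three edges $\lambda \to \lambda$, $\lambda \to \lambda + \Box$, $\lambda \to \lambda - \Box$) together with the standard identities $f^\lambda = \sum_{\mu = \lambda - \Box} f^\mu$ and $\sum_{\mu = \lambda + \Box} f^\mu = (|\lambda|+1) f^\lambda$ (the latter being a consequence of the commutation $[D,U] = I$ of the down/up operators on the Young graph $\Y$), I obtain
\[ \dim_{\Gamma_B}((n,\lambda)) = \binom{n}{k}\, I_{n-k}\, f^\lambda \qquad \text{for } \lambda \in \Y_k,\ k \leq n, \]
where $I_m$ denotes the number of involutions on $\{1,\ldots,m\}$ (equivalently, the number of partial pair matchings of $m$ labeled points), satisfying $I_m = I_{m-1}+(m-1)I_{m-2}$, and $f^\lambda$ is the number of standard Young tableaux of shape $\lambda$. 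An analogous but more elaborate induction---or equivalently an application of the Baker--Campbell--Hausdorff identity $e^{x(U+D)} = e^{x^2/2}\, e^{xU}\, e^{xD}$ to compute the matrix elements $\langle \mu | (U+D)^t | \lambda_0 \rangle$---yields a similar explicit formula for the transition dimension $\dim_{\Gamma_B}((m_0,\lambda_0),(n,\mu))$ as a sum indexed by common subdiagrams $\nu \subseteq \lambda_0 \cap \mu$ of terms $f^{\lambda_0/\nu} f^{\mu/\nu}$ weighted by binomial and involution factors.

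Next, the Vershik--Kerov ergodic method guarantees that for $\bP$-a.e.\ path $(\gamma_n) = ((n,\lambda_n))$ the ratio $\dim_{\Gamma_B}((m_0,\lambda_0),\gamma_n) / \dim_{\Gamma_B}(\gamma_n)$ converges and, up to the positive factor $\dim_{\Gamma_B}((m_0,\lambda_0))$, determines $\bP(\omega_{m_0} = (m_0, \lambda_0))$. Using the formulas from the previous step and writing $k_n := |\lambda_n|$, the principal contribution to this ratio (indexed by $\nu = \lambda_0$, assuming $\lambda_0 \subseteq \lambda_n$) is
\[ \frac{\binom{n-m_0}{k_n-k_0}\, I_{n-m_0-(k_n-k_0)}}{\binom{n}{k_n}\, I_{n-k_n}} \cdot \frac{f^{\lambda_n/\lambda_0}}{f^{\lambda_n}}. \]
Using the asymptotic $I_m / I_{m-1} \sim \sqrt{m}$ and the trivial bounds $k_n, n-k_n \leq n$, the binomial-and-involution factor is $O(n^{(k_0-m_0)/2})$; the combinatorial factor $f^{\lambda_n/\lambda_0}/f^{\lambda_n}$ is bounded by a constant depending only on $\lambda_0$ via iteration of the branching rule. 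Since $k_0 < m_0$, the full ratio tends to $0$. The subdominant terms ($\nu \subsetneq \lambda_0$) obey analogous estimates, and the edge case $k_n = n$ is trivial since no path from $(m_0, \lambda_0)$ with $k_0 < m_0$ can reach any level-$n$ vertex of full size. Hence $\bP(\omega_{m_0} = (m_0,\lambda_0)) = 0$, completing the proof.

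The most delicate point I expect is the uniform control of $f^{\lambda_n/\nu}/f^{\lambda_n}$, since $\lambda_n$ is a priori arbitrary---its shape along a $\bP$-typical path is not known in advance. The required bound follows by expanding $f^{\lambda_n/\nu}$ through $|\nu|$ iterations of the branching rule and comparing with $f^{\lambda_n}$; since $|\nu| \leq |\lambda_0|$ is a fixed quantity, the result is a uniform constant in $\lambda_n$.
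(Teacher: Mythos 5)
Your argument is correct, but it takes a genuinely different route from the paper's. The paper never computes the transition dimensions $\dim_{\Gamma_B}((m_0,\lambda_0);(n,\mu))$: its key step (Lemma \ref{lem.zeroprobnew}) observes that the number of $(n-m_0)$-step lazy walks from $\lambda_0$ to $\mu$ depends only on the number of steps, so it equals $\dim_{\Gamma_B}((m_0-1,\lambda_0);(n-1,\mu))$ and is therefore bounded by $\dim_{\Gamma_B}(n-1,\mu)$; this collapses the whole problem to showing that $\max_{|\mu|<n}\dim_{\Gamma_B}(n-1,\mu)/\dim_{\Gamma_B}(n,\mu)\to 0$. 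That maximum is then identified, via the monotonicity of $l\mapsto \tfrac{n-l}{n}\cdot\tfrac{I_{n-l-1}}{I_{n-l}}$, as $I_{n-1}/I_n$, whose decay the paper obtains by comparison with the Brauer walk counts of \cite{VN06}. You instead compute the transition dimensions explicitly through the normal-ordering identity on the differential poset $\Y$ (a sum over common subdiagrams $\nu$ with binomial and involution weights) and estimate every term, using $f^{\mu/\nu}\le f^{\mu}/f^{\nu}$ for the skew factor and $I_{m-1}/I_m = O(m^{-1/2})$ for the numerical one; I have checked that the principal term and the subdominant terms all obey the bound $O(n^{-(m_0-k_0)/2})$ as you claim. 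Both proofs hinge on the same closed formula $\dim_{\Gamma_B}(n,\lambda)=\binom{n}{|\lambda|}\,I_{n-|\lambda|}\,f^{\lambda}$ (the paper's $M(n,l)$), but your route is computationally heavier while yielding an explicit decay rate, whereas the paper's level-shift inequality buys a much shorter argument that needs no information about skew tableaux or about the shape of a typical path. Two cosmetic points: the operator being normal-ordered for the lazy walk is $U+D+1$ rather than $U+D$ (your involution-number coefficients show you are in fact using the former), and the bound $f^{\lambda_n/\lambda_0}/f^{\lambda_n}\le 1/f^{\lambda_0}$ follows in one line from $f^{\mu}=\sum_{|\nu|=k}f^{\nu}f^{\mu/\nu}$, so the ``delicate point'' you flag at the end is harmless.
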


The proof of Theorem \ref{thm.centralmeasuresbistochastic} will be divided into several lemmas.

The following lemma is an adaptation of \cite[Lemma 2.2]{VN06} that accommodates the additional possibility of resting steps. As is common, we denote by $|\lambda|$ the number of boxes of an integer partition $\lambda$, i.e. its level in the Young graph.

\begin{lemma} \label{lem.zeroprobnew}
Suppose that
\begin{align*}
\lim_{n \to \infty} \max_{|\lambda| <n} \frac{\dim_{\Gamma_{B}}(n-1,\lambda)}{\dim_{\Gamma_{B}}(n,\lambda)} = 0.
\end{align*}
Then, for every ergodic central measure $\bP$ on $\Gamma_B$ and for every vertex $(n_0,\lambda) \in V_{n_0}$ such that $|\lambda| < n_0$, we have
\[ \bP( \{ \omega \in \Omega_{\Gamma_B} \ ; \ X_{n_0}(\omega) = (n_0,\lambda)  \} ) = 0. \]
\end{lemma}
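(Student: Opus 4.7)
The plan is to argue by contradiction. Suppose $\bP(X_{n_0}=v)>0$ for some $v=(n_0,\lambda)$ with $|\lambda|<n_0$. For each infinite path define the \emph{slack} at level $n$ by $s_n:=n-|\mu_n|$, where $\omega_n=(n,\mu_n)$. The edge structure of $\Gamma_B$ forces $s_n$ to be non-decreasing: forward moves leave slack unchanged, stays add $1$, and back moves add $2$. Hence $s_\infty:=\lim_n s_n\in\{0,1,2,\dots\}\cup\{\infty\}$ exists $\bP$-a.s., and $\{s_\infty=k\}$ is a tail event. Ergodicity of $\bP$ thus makes $s_\infty$ $\bP$-a.s.\ equal to some constant $k$, and the assumption together with monotonicity forces $k\geq 1$.

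I first rule out $k=1$, which already contains the main idea. If $k=1$, the unique slack-increase step must be a stay (a back would give slack $\geq 2$); let $T$ be its time. Then $T<\infty$ $\bP$-a.s., so $\sum_{t\geq 1}\bP(T=t)=1$. For $\nu\in\Y_{t-1}$, any length-$t$ lazy walk from $\emptyset$ to $\nu$ on $\Gamma_B$ consists of exactly one stay (placed in one of $t$ positions) and $t-1$ forward moves forming one of the $f^\nu$ standard Young tableaux of $\nu$, so $\dim_{\Gamma_B}(t,\nu)=t\,f^\nu$. Combining this with centrality gives
\[
\bP(T=t)=\sum_{|\nu|=t-1}\bP(X_{t-1}=(t-1,\nu),\,X_t=(t,\nu))=\sum_{|\nu|=t-1}\frac{\bP(X_t=(t,\nu))}{t}=\frac{\bP(s_t=1)}{t}.
\]
Since $s_\infty=1$ a.s., $\bP(s_t=1)\to 1$, whence $\bP(T=t)\sim 1/t$ and $\sum_t\bP(T=t)=+\infty$, contradicting $\sum_t\bP(T=t)=1$.

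The case $k\geq 2$ is handled by a dimension-comparison argument. For $|\mu|=t-k$ the leading contribution to $\dim_{\Gamma_B}(t,\mu)$ as $t\to\infty$ is $\binom{t}{k}f^\mu$, coming from the ``pure stay'' walks ($k$ stays, $0$ backs, $t-k$ forwards), while walks with $b\geq 1$ back steps contribute only $O(t^{k-1}f^\mu)$. Hence the conditional probability that at least one back step has occurred by time $t$, given $X_t=(t,\mu)$, is $O(1/t)\to 0$ uniformly in $\mu$; centrality and monotone convergence then give $\bP(N_B\geq 1)=0$, where $N_B$ counts back moves along the path. On the other hand, $s_\infty=k\geq 2$ a.s.\ forces $\bP(X_t=(t,\mu))>0$ for some $|\mu|=t-k$ at large $t$, and by centrality every path to such a vertex---including the many paths using at least one back step---has positive probability, contradicting $\bP(N_B\geq 1)=0$.

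The hardest step is the combinatorial bookkeeping in the $k\geq 2$ case, which bounds $P^{b\geq 1}(t,\mu)/\dim_{\Gamma_B}(t,\mu)=O(1/t)$ uniformly in $\mu$. This is precisely where the hypothesis $\dim_{\Gamma_B}(n-1,\mu)/\dim_{\Gamma_B}(n,\mu)\to 0$ enters essentially: iterating it lets one decompose $\dim_{\Gamma_B}(t,\mu)$ according to the number of non-forward steps and thereby estimate the $b\geq 1$ contributions against the leading $\binom{t}{k}f^\mu$ term.
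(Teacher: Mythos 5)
Your strategy is genuinely different from the paper's and your $k=1$ step is a nice, correct argument, but the proof as a whole has two real gaps. First, the dichotomy is incomplete: the tail-measurable limit $s_\infty$ can equal $+\infty$, and neither of your two cases covers this (the $k\ge 2$ argument speaks of vertices with $|\mu|=t-k$ for a fixed finite $k$). Second, and more seriously, the key combinatorial estimate in the $k\geq 2$ case is false. Walks with $b\geq 1$ back steps do \emph{not} contribute only $O(t^{k-1}\dim_{\Y}(\mu))$: a back step costs $2$ units of slack but its two non-forward positions can be placed in $\sim t^2$ ways (the underlying oscillating-tableau count on $\Y$ grows like $t^{2b}\dim_{\Y}(\mu)$ for $b$ down steps), so a walk with $b$ backs and $k-2b$ stays contributes at order $t^{k}\dim_{\Y}(\mu)$, the \emph{same} order as the pure-stay walks. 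Concretely, for $|\mu|=t-2$ one has $\dim_{\Gamma_B}(t,\mu)=t(t-1)\dim_{\Y}(\mu)$, of which exactly $\binom{t}{2}\dim_{\Y}(\mu)$ paths use two stays and $\binom{t}{2}\dim_{\Y}(\mu)$ use one back step; under centrality the conditional probability of a back step given $X_t=(t,\mu)$ is therefore $1/2$, not $O(1/t)$, and the conclusion $\bP(N_B\geq 1)=0$ cannot be drawn. Relatedly, your proof never actually uses the hypothesis of the lemma; the closing sentence only asserts that it ``enters essentially'' without exhibiting where.

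For comparison, the paper's proof is a two-line application of the ergodic method: writing $\bP(X_{n_0}=(n_0,\lambda))=\dim_{\Gamma_B}(n_0,\lambda)\lim_n \dim_{\Gamma_B}((n_0,\lambda);(n,\lambda'_n))/\dim_{\Gamma_B}(n,\lambda'_n)$ along a suitable path, one observes that since $|\lambda|<n_0$ every continuing walk has already made a non-forward step, so $|\lambda'_n|<n$; the $(n-n_0)$-step walks from $\lambda$ to $\lambda'_n$ are then counted by $\dim_{\Gamma_B}((n_0-1,\lambda);(n-1,\lambda'_n))\leq \dim_{\Gamma_B}(n-1,\lambda'_n)$, and the hypothesis kills the limit. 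If you want to keep your tail-triviality framework, you would still need an input of this dimension-comparison type to handle $k\geq 2$ and $k=\infty$; the slack decomposition alone does not suffice.
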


\begin{proof}
By the ergodic method (Theorem \ref{thm.ergodicmethod}) we can choose a path $(0,\emptyset) \nearrow \dots \nearrow (n,\lambda_n') \nearrow \dots$ in $\Gamma_B$ such that
\begin{align*}
\bP( \{ \omega \in \Omega_{\Gamma_B} \ ; \ X_{n_0}(\omega) = (n_0,\lambda)  \} ) = \dim_{\Gamma_{B}}(n_0,\lambda) \lim_{n \to \infty} \frac{\dim_{\Gamma_{B}}((n_0,\lambda); (n,\lambda'_n))}{\dim_{\Gamma_{B}}(n,\lambda'_n)}.
\end{align*} 
Let $n$ be large enough such that $\dim_{\Gamma_{B}}((n_0,\lambda); (n,\lambda'_n)) \neq 0$. Then there exists an $n$-step walk on $\Y$ that passes through $\lambda$ at step $n_0$ and ends at $\lambda'_n$. Since $|\lambda| < n_0$, this walk must include at least one non-forward step and thus $|\lambda'_n| < n $. Hence, $(n_0-1,\lambda)$ and $(n-1,\lambda'_n)$ are well-defined vertices of $\Gamma_B$ and we have
\begin{align*}
\dim_{\Gamma_{B}}((n_0,\lambda); (n,\lambda'_n)) &= |\{ n-n_0-\text{step walks from} \lambda \text{ to }  \lambda'_n\}| \\
&= \dim_{\Gamma_{B}}((n_0-1,\lambda); (n-1,\lambda'_n)) \\
&\leq \dim_{\Gamma_{B}}(n-1,\lambda'_n). 
\end{align*}
Therefore, our assumption implies
\begin{align*}
\lim_{n \to \infty} \frac{\dim_{\Gamma_{B}}((n_0,\lambda); (n,\lambda'_n))}{\dim_{\Gamma_{B}}(n,\lambda'_n)} = 0,
\end{align*}
whence $\bP( \{ \omega \in \Omega_{\Gamma_B} \ ; \ X_{n_0}(\omega) = (n_0,\lambda)  \} ) = 0$.
\end{proof}



\begin{remark}
\begin{itemize}
\item[(1)] Colloquially, Lemma \ref{lem.zeroprobnew} states that every central ergodic measure on $\Gamma_B$ must be fully supported on the copy of $\Y$ that it contains. If we interprete infinite paths in $\Gamma_B$ as lazy walks on $\Y$, then this means every central ergodic random lazy walk on $\Y$ must already converge deterministically towards a boundary point of $\Y$.
\item[(2)] Intuitively, the reason for this behaviour is that the Young graph $\Y$ is too connected. The centrality condition forces a measure to spread its mass over too many walks, so that in the limit no mass on nondeterministic paths is retained. This is in stark contrast to the situation where the principal graph is a tree, see \cite{Wa20}.  
\end{itemize}
\end{remark}

For notational convenience let us define $\dim_{\Gamma_B}(n,\lambda) = 0$, whenever $|\lambda| > n$.

\begin{lemma} \label{lem.recursionbistochastic}
There exists an array of integers $M(n,l), \ n,l=0,1,2,\dots$ such that 
\begin{align} \label{eqn.recursion1}
\frac{\dim_{\Gamma_B}(n,\lambda)}{\dim_{\Y} \lambda} = M(n,|\lambda|)
\end{align}
for all $n \in \N$ and all integer partitions $\lambda$ with $|\lambda| \leq n$. In particular this quotient of dimensions depends only on $n$ and  the number of boxes of $\lambda$. \\
Moreover, the array $M(n,l), \ n,l=0,1,2,\dots$ is uniquely determined by the following recursion and initial conditions:
\begin{align}
M(n,n) &= 1, \quad  n \geq 0; \label{eqn.recursion2a} \\
M(n,l) &=0,  \quad  n \geq 0, \ l >n; \\
M(1,0) &=1; \\
M(n,0) &= M(n-1,0) + M(n-1,1), \quad n \geq 1, \\
M(n,l) &= M(n-1,l-1) + M(n-1,l) + (l+1) M(n-1,l+1), \quad n \geq 1, \ l=1,\dots n-1. \label{eqn.recursion2}
\end{align}

\end{lemma}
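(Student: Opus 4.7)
The plan is to prove the formula $\dim_{\Gamma_B}(n,\lambda) = \dim_{\Y}(\lambda) \cdot M(n,|\lambda|)$ by induction on $n$, taking $M(n,l)$ to be \emph{defined} by the recursion and initial conditions of \eqref{eqn.recursion2a}--\eqref{eqn.recursion2} (which clearly determine $M$ uniquely by downward recursion on $n-l$). The two key ingredients needed to carry out the induction are two standard identities on Young graph dimensions, namely the branching rule
\begin{align*}
\sum_{\mu:\, \mu = \lambda - \Box} \dim_{\Y}(\mu) = \dim_{\Y}(\lambda)
\end{align*}
and its dual consequence of Frobenius reciprocity (equivalently, Pieri's rule applied to the trivial representation),
\begin{align*}
\sum_{\mu:\, \mu = \lambda + \Box} \dim_{\Y}(\mu) = (|\lambda|+1)\dim_{\Y}(\lambda).
\end{align*}

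For the base case $n=0$ both sides of \eqref{eqn.recursion1} equal $1$. For the inductive step, I would decompose a path to $(n,\lambda)$ according to the type of its last edge. Since $(n-1,\mu) \nearrow (n,\lambda)$ in $\Gamma_B$ iff $\mu \in \{\lambda,\, \lambda+\Box,\, \lambda-\Box\}$, one obtains
\begin{align*}
\dim_{\Gamma_B}(n,\lambda) = \dim_{\Gamma_B}(n-1,\lambda) + \sum_{\mu = \lambda + \Box} \dim_{\Gamma_B}(n-1,\mu) + \sum_{\mu = \lambda - \Box} \dim_{\Gamma_B}(n-1,\mu).
\end{align*}
Plugging in the inductive hypothesis and pulling the factors $M(n-1,\cdot)$ out of each sum reduces the right-hand side to
\begin{align*}
\dim_{\Y}(\lambda) M(n-1,l) + M(n-1,l+1)\!\!\!\sum_{\mu = \lambda + \Box}\!\!\! \dim_{\Y}(\mu) + M(n-1,l-1)\!\!\!\sum_{\mu = \lambda - \Box}\!\!\! \dim_{\Y}(\mu),
\end{align*}
where $l = |\lambda|$. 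Applying the two identities above produces $\dim_{\Y}(\lambda)\bigl[M(n-1,l-1) + M(n-1,l) + (l+1)M(n-1,l+1)\bigr]$, which by definition of $M$ equals $\dim_{\Y}(\lambda)\,M(n,l)$, completing the induction.

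The boundary cases require only a brief separate check: when $l=0$ the down-step sum is empty, matching $M(n,0) = M(n-1,0) + M(n-1,1)$; when $l = n$ both the stay-step and up-step contributions vanish (using the convention $\dim_{\Gamma_B}(n,\lambda) = 0$ for $|\lambda|>n$), and only the branching rule survives, giving $M(n,n)=1$ via the identity $\sum_{\mu = \lambda-\Box}\dim_{\Y}(\mu) = \dim_{\Y}(\lambda)$. I do not anticipate a substantive obstacle here: the whole content of the lemma is the observation that the three ``transition sums'' in the Bratelli diagram factor through $\dim_{\Y}(\lambda)$ with coefficients depending only on $|\lambda|$, and this factorization is exactly what the two Young graph identities provide.
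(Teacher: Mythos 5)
Your argument is correct and coincides with the paper's own proof: both proceed by induction on $n$, decompose $\dim_{\Gamma_B}(n,\lambda)$ over the three possible types of last edge, and reduce the resulting sums via the two Young graph identities $\sum_{\mu=\lambda-\Box}\dim_{\Y}(\mu)=\dim_{\Y}(\lambda)$ and $\sum_{\mu=\lambda+\Box}\dim_{\Y}(\mu)=(|\lambda|+1)\dim_{\Y}(\lambda)$. Your explicit treatment of the boundary cases $l=0$ and $l=n$ is a minor addition the paper leaves implicit, but the substance is identical.
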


\begin{proof}
We prove Lemma \ref{lem.recursionbistochastic} by induction with respect to the parameter $n$. The statement is clearly true for $n=0,n=1$, as the only Young diagrams reached at most one step are the empty diagram $\emptyset$ and the one box $\Box$. Let us thus assume that \ref{eqn.recursion2a}-\ref{eqn.recursion2} are true for some $n-1$. We see that
\begin{align*}
\dim_{\Gamma_B}(n,\emptyset) &= \dim_{\Gamma_B}(n-1,\emptyset) + \dim_{\Gamma_B}(n-1,\Box) \\
&= (M(n-1,0) + M(n-1,1)) \cdot \dim_{\Y} \emptyset,
\end{align*}
and similarly
\begin{align*}
\dim_{\Gamma_B}(n,\lambda) &= \sum_{\eta = \lambda - \Box} \dim_{\Gamma_B}(n-1,\eta) + \dim_{\Gamma_B}(n-1,\lambda) + \sum_{\mu = \lambda + \Box} \dim_{\Gamma_B}(n-1,\mu) \\
&= M(n-1,|\lambda|-1) \sum_{\eta = \lambda - \Box} \dim_{\Y} \eta  + M(n-1,|\lambda|) \dim_{\Y} \lambda \\
 &+ M(n-1,|\lambda|+1) \sum_{\mu = \lambda + \Box} \dim_{\Y} \mu
\end{align*}
for $\lambda \in \Y_n$. By the rules on dimension in $\Y$, we have 
\[ \sum_{\eta = \lambda - \Box} \dim_{\Y} \eta  = \dim_{\Y} \lambda \]
and 
\[ \sum_{\mu = \lambda + \Box} \dim_{\Y} \mu = \dim \Ind_{S_{|\lambda|}}^{S_{\lambda+1}} \lambda = (| \lambda | +1) \dim_{\Y} \lambda.  \]
The induction hypothesis then implies Equation \ref{eqn.recursion1} and the recursive rules \ref{eqn.recursion2a}-\ref{eqn.recursion2}.
\end{proof}

\begin{remark}
The numbers $M(n,l)$ defined by recursion \ref{eqn.recursion2a}-\ref{eqn.recursion2} as well as their analogues for the Brauer algebra in \cite{VN06} are examples of \emph{Catalan-like numbers}, see e.g. \cite{Ai99}, \cite{Ai08}.
\end{remark}

Due to the 'Pascal-like' context in which the array $M(n,l), \ n,l=0,1,\dots$ appeared, one can prove the following more explicit formulas. 

\begin{lemma} \label{lem.bistochasticeasyrec}
The array $M(n,l), \ n,l=0,1,\dots$ satisfies the following equalities
\begin{itemize}
\item[(1)] $M(0,0)= M(1,0) = 1$ and $M(n,0) = M(n-1,0) + (n-1) M(n-2,0), \ n>1 $.
\item[(2)] $M(n+l,l) = \binom{n+l}{l} M(n,0), \ l \geq 1, \ n \geq 0$.
\item[(3)] The sequence $\frac{M(n-1,0)}{M(n,0)}, n\geq 1$ is weakly decreasing.
\item[(4)] The sequence $ n \cdot \frac{M(n-1,0)}{M(n,0)}, n\geq 1$ is weakly increasing.
\end{itemize}

\end{lemma}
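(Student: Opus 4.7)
The plan is to prove (1) and (2) by a simultaneous induction, and then (3) and (4) by a mutual induction using only the three-term recurrence produced in (1). No combinatorial model for the $M(n,l)$ is required; everything is driven by the recursion of Lemma~\ref{lem.recursionbistochastic}.

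For (1) and (2) I set up strong induction on a single parameter $N$ with joint hypothesis $H_N$: \emph{(1) holds for all $n \leq N$, and (2) holds for every $(n,l)$ with $l \geq 1$ and $n+l \leq N$.} The base cases $N = 0,1$ reduce to the initial conditions together with $M(1,1) = 1 = \binom{1}{1} M(0,0)$. For the inductive step I first prove (1) at level $N$: the defining rule $M(N,0) = M(N-1,0) + M(N-1,1)$ combined with (2) at $(N-2,1)$, which is available from $H_{N-1}$, gives $M(N-1,1) = (N-1)M(N-2,0)$, yielding (1). I then prove (2) at every $(n,l)$ with $l \geq 1$ and $n+l = N$. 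The boundary case $l = N$ is trivial; otherwise I expand $M(N,l)$ via the main recursion, apply (2) inductively to each of the three terms on the right-hand side (each sits at total index $N-1$), and rewrite $M(N-l,0)$ using (1) at $m = N-l$, which has just been established. Two standard identities — Pascal's relation and the rescaling $(l+1)\binom{N-1}{l+1} = (N-l-1)\binom{N-1}{l}$ — then collapse the resulting expression to $\binom{N}{l} M(N-l,0)$.

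For (3) and (4) I write $a_n := M(n,0)$, so that (3) becomes the log-convexity bound $a_{n-1}^2 \leq a_n a_{n-2}$ and (4) becomes $n\,a_{n-1}^2 \geq (n-1)\,a_n a_{n-2}$. I treat them by mutual induction on $n \geq 2$, with the base $n=2$ checked by hand from $a_0 = a_1 = 1$, $a_2 = 2$. In the inductive step I use (1) in the forms $a_n = a_{n-1} + (n-1) a_{n-2}$ and $a_{n-1} = a_{n-2} + (n-2) a_{n-3}$ to compute
\begin{align*}
a_n a_{n-2} - a_{n-1}^2 &= (n-1)\,a_{n-2}^2 - (n-2)\,a_{n-1} a_{n-3}, \\
n\,a_{n-1}^2 - (n-1)\,a_n a_{n-2} &= a_{n-1} a_{n-2} + n(n-2)\,a_{n-1} a_{n-3} - (n-1)^2 a_{n-2}^2.
\end{align*}
Nonnegativity of the first right-hand side is exactly (4) at index $n-1$. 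Nonnegativity of the second follows from (3) at index $n-1$ in the form $a_{n-1} a_{n-3} \geq a_{n-2}^2$, the numerical identity $(n-1)^2 - n(n-2) = 1$, and the trivial monotonicity $a_{n-1} \geq a_{n-2}$ implied by (1).

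The only real challenge is the bookkeeping for the joint induction proving (1) and (2): each application of the inductive hypothesis must be verified to fall inside $H_{N-1}$, and in particular (1) at $m = N-l$ must be established before it is cited in the proof of (2) at level $N$. Once the logical order is fixed, the remaining manipulations are routine Pascal-type identities and the induction for (3)/(4) then runs cleanly on top of (1).
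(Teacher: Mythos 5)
Your proof is correct, and for parts (3) and (4) it follows the same underlying strategy as the paper: a mutual induction in which the inequality (3) at step $n$ is deduced from (4) at step $n-1$ and vice versa, driven entirely by the three-term recurrence of part (1). The difference is purely in parametrization: the paper passes to the ratio sequence $a_n = M(n-1,0)/M(n,0)$, which satisfies the M\"obius-type recursion $a_n = 1/(1+(n-1)a_{n-1})$, and proves the sandwich $a_{n+1} \leq a_n \leq \tfrac{n+1}{n}a_{n+1}$ directly; you keep $M(n,0)$ itself and recast (3) and (4) as the log-convexity bounds $a_{n-1}^2 \leq a_n a_{n-2}$ and $n a_{n-1}^2 \geq (n-1) a_n a_{n-2}$, which leads to slightly cleaner polynomial manipulations (and avoids the minor algebraic slips present in the paper's displayed chain of inequalities). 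You also supply the joint strong induction establishing (1) and (2) from the defining recursion of Lemma \ref{lem.recursionbistochastic}, which the paper explicitly omits as routine; your bookkeeping there (deriving (1) at level $N$ from (2) at $(N-2,1)$ before using it to collapse the three inductive terms via Pascal's rule and the identity $(l+1)\binom{N-1}{l+1}=(N-1-l)\binom{N-1}{l}$) is sound, modulo the boundary conventions $M(n,l)=0$ for $l>n$ which you handle correctly.
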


\begin{proof}
The proof of this result is a straithforward inductive argument. We will omit the easy arguments for parts (1) and (2) and immediately addres part (3) and (4). Set $a_n = \frac{M(n-1,0)}{M(n,0)}, \ n=1,2,\dots.$ Dividing the recursion of part (1) by $a_n$, we see that this sequence satisfies the recursion
\begin{align*}
a_1 = 1, \qquad a_n = \frac{1}{1+(n-1)a_{n-1}}, \quad n=2,3,\dots.
\end{align*}
We now prove by induction that 
\begin{align} \label{eq.inductioninequal}
a_{n+1} \leq a_n \leq \frac{n+1}{n} a_{n+1} \qquad n=1,2,\dots.
\end{align}
As $a_2 = 2$, this is indeed correct for $n=1$. Let us assume that  \ref{eq.inductioninequal} holds for all $1\leq k \leq n$ for some $n \in \N$. For $n+1$, we then see that
\begin{align*}
a_{n+2} = \frac{1}{1+(n+1)a_{n+1}} \leq \frac{1}{1+(n+1) \tfrac{n}{n+1}a_{n}} = \frac{1}{1+n a_{n}} = a_{n+1} 
\end{align*}
by the induction hypothesis. For the other inequality, we  note first that the hypothesis implies that $a_{n+1} \leq 1$. We thus get
\begin{align*}
a_{n+2} &= \frac{1}{1+(n+1)a_{n+1}} \\
&=  \frac{1+na_n}{n+2 +a_n} \\
&\geq \frac{n(1+na_{n+1})}{n^2+2n +(n+1) a_{n+1}} \\
&\geq \frac{n(1+n)}{n^2+3n+1} a_{n+1} \\
&\geq \frac{n+1}{n+2} a_{n+1}. 
\end{align*}
Hence, the lemma is proven.
\end{proof}

\begin{corollary} \label{cor.bistochasticlimit}
 We have 
\begin{align*}
\max_{l: \ l <n} \frac{M(n-1,l)}{M(n,l)}  = \frac{M(n-1,0)}{M(n,0)} \quad \to \quad 0 \qquad \text{ as } \quad n \to \infty.
\end{align*}
\end{corollary}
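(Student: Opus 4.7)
The plan is to reduce both parts of the statement to properties of the single sequence $a_m := M(m-1,0)/M(m,0)$ already analyzed in Lemma \ref{lem.bistochasticeasyrec}. Everything collapses to a one-dimensional question, so the argument is essentially bookkeeping; I do not expect a serious obstacle.

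First I would use part (2) of Lemma \ref{lem.bistochasticeasyrec} to rewrite the ratio in column $l$ purely in terms of the zero-th column. For $0 \leq l < n$ both numerator and denominator are covered by the identity $M(n,l)=\binom{n}{l}M(n-l,0)$, which gives
$$\frac{M(n-1,l)}{M(n,l)} = \frac{\binom{n-1}{l}\,M(n-l-1,0)}{\binom{n}{l}\,M(n-l,0)} = \frac{n-l}{n}\,a_{n-l}.$$
This key reduction turns the whole two-parameter array into the single sequence $a_m$.

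Next, to identify the maximum, I would invoke part (4) of Lemma \ref{lem.bistochasticeasyrec}, which says that $m \mapsto m\,a_m$ is weakly increasing. Applied at $m = n-l$ and $m = n$, this yields $(n-l)\,a_{n-l} \leq n\,a_n$, that is, $\tfrac{n-l}{n}\,a_{n-l} \leq a_n$, with equality at $l = 0$. Hence
$$\max_{0 \leq l < n}\frac{M(n-1,l)}{M(n,l)} = a_n = \frac{M(n-1,0)}{M(n,0)},$$
which is the first claimed equality.

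Finally, to show $a_n \to 0$, I would combine monotonicity with the recursion already derived in the proof of Lemma \ref{lem.bistochasticeasyrec}, namely $a_{n+1} = 1/(1 + n\,a_n)$. By part (3) the sequence $(a_n)$ is weakly decreasing and nonnegative, so it converges to some $L \geq 0$. If $L > 0$ then $n a_n \to \infty$, forcing $a_{n+1} \to 0$, a contradiction with $a_{n+1} \geq L$. Therefore $L = 0$, completing the corollary.
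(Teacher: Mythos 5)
Your proposal is correct, and it splits naturally into two halves. The identification of the maximum is essentially the paper's own argument: both you and the paper use part (2) of Lemma \ref{lem.bistochasticeasyrec} to reduce every column to the zeroth one via $\frac{M(n-1,l)}{M(n,l)} = \frac{n-l}{n}a_{n-l}$ and then the monotonicity of $m \mapsto m a_m$ to locate the maximum at $l=0$ (the paper's citation of ``parts (2) and (3)'' here is a slip; it is part (4) that does the work, exactly as you use it). Where you genuinely diverge is in proving $a_n \to 0$. The paper assumes $a_n \to a > 0$, deduces the exponential upper bound $M(n,0) \leq (1/a)^n$, and contradicts this with a superexponential lower bound on the number of $2n$-step walks on the Young graph imported from the proof of \cite[Theorem 2.10]{VN06}. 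You instead exploit the recursion $a_{n+1} = 1/(1+n a_n)$ (already derived inside the proof of Lemma \ref{lem.bistochasticeasyrec}): if $a_n \downarrow L > 0$ then $n a_n \geq nL \to \infty$ forces $a_{n+1} \to 0$, contradicting $a_{n+1} \geq L$. Your route is more elementary and self-contained — it removes the only external input (the walk-counting estimate from \cite{VN06}) from the proof — while the paper's argument has the mild virtue of showing directly that $M(n,0)$ grows superexponentially, a fact of some independent combinatorial interest. Both are complete proofs.
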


\begin{proof}
Using parts (2) and (3) of Lemma \ref{lem.bistochasticeasyrec}, it follows that 
\begin{align*}
\max_{l: \ l <n} \frac{M(n-1,l)}{M(n,l)} = \frac{1}{n} \max_{k=1,\dots, n} k \cdot \frac{M(k-1,0)}{M(k,0)} = \frac{M(n-1,0)}{M(n,0)}.
\end{align*}
Since the sequence $\frac{M(n-1,0)}{M(n,0)}$ is weakly decreasing by part (4) of Lemma \ref{lem.bistochasticeasyrec}, it has a limit $a \in [0,1)$. Suppose that $a>0$. Then
\begin{align*}
M(n,0) = \prod_{k=1}^n \frac{M(k-1,0)}{M(k,0)} \leq \left(\frac{1}{a} \right)^n
\end{align*}
for all $n \geq 1$. Since $M(n,0) = \dim_{\Gamma_B}(n,\lambda)$ counts the number of $n$-step walks on the Young graph with resting steps, we have  $M(n,0) \geq \tilde{M}(n,0)=\dim_{P(\Y)}(n,0)$, the number of $n$-step walks on the Young graph without resting steps. By the proof of \cite[Theorem 2.10]{VN06}, for every $K>0$, there exists a constant $C(K)>0$ such that $\tilde{M}(2n,0)> C(M) M^n$ for all $n$. Therefore, choosing $K = \frac{2}{a^2} $, we get the contradiction
\begin{align*}
\left(\frac{1}{a} \right)^{2n} \geq M(2n,0) \geq \tilde{M}(2n,0) > C(K) 2^n \left( \frac{1}{a}\right)^{2n} > \left( \frac{1}{a}\right)^{2n}
\end{align*}
for large enough $n$, whence $a=0$.
\end{proof}

\begin{proof}[Proof of Theorem \ref{thm.centralmeasuresbistochastic}]
Theorem \ref{thm.centralmeasuresbistochastic} follows now directly from Lemma \ref{lem.zeroprobnew}, Lemma \ref{lem.recursionbistochastic} and Corollary \ref{cor.bistochasticlimit}.
\end{proof}



\subsection{Interpretation of $\Gamma_B$ as pascalized graph} \label{subsec.bistochasticprincipal}

There is also a second interpretation of $\Gamma_B$ as the pascalization $\cP(\Lambda)$ of a principal graph $\Lambda$. The $n$-th level vertex set of $\Lambda$ is $\Lambda_n = V_{n} \cup V_{n-1}$ and there is an edge between $\lambda \in \Lambda_n$ and $\mu \in \Lambda_{n-1}$ if and only if $\lambda = \mu$ or $\lambda = \mu + \Box$. Note that every vertex $\lambda \in \Lambda_n$ has exactly one edge to all its neighbours in $\Y$ and one edge to a copy of itself, so that indeed $\cP(\Lambda) = \Gamma_B$.

We can compactly draw the graph $\Lambda$ in the following way. Consider the branching graph $S(\Lambda)$ (which we will ad-hoc call the \emph{skeleton} of $\Lambda$) whose $n-th$ level vertex sets are $S(\Lambda)_0 = \{ Y_0 \}, S(\Lambda)_n = \{ \Y_n, \Y_{n-1} \}$  with one edge connecting $Y_{n-1} \in S(\Lambda)_n$ to $Y_{n} \in S(\Lambda)_{n+1}$ and one edge from $Y_{n} \in S(\Lambda)_n$ to $Y_n, Y_{n+1} \in S(\Y)_{n+1}$ each. A single edge from $\Y_n$ to $\Y_{n+1}$ in $S(\Lambda)$ then represents the set of edges from $\Y_n$ to $\Y_{n+1}$ in the Young graph and the edge from $\Y_n$ to $\Y_{n}$    in $S(\Lambda)$ represents the set $\{ (\lambda,\lambda), \ \lambda \in \Y_n \}$.\\

Define the family of finite-dimensional semisimple algebras $B_0 = \C, \ B_n = \C \oplus M_n, \ n \geq 1 $ and the embeddings
\begin{align*}
\iota'_0: B_0 \to B_1, \qquad \qquad x &\mapsto (x,x) \\ 
\iota'_n: B_n \to B_{n+1},  \qquad (x,A) &\mapsto (x, \bigl(\begin{smallmatrix}  x & 0\\  0 & A
 \end{smallmatrix}\bigl)).
\end{align*}
Then, $S(\Lambda)$ is exactly the branching graph of the inductive sequence 
\[ B_0 \hookrightarrow^{\iota'_0} B_1 \hookrightarrow^{\iota'_1} B_2 \hookrightarrow^{\iota'_2} \dots \] 
as can be easily checked. From these observations we obtain directly the following description of $\Lambda$.

\begin{lemma} \label{lem.bistochbranching}
Define the finite-dimensional $C^*$-algebras $A_0 = \C,\ A_n = \C[S_n] \oplus M_n(\C[S_{n-1}]), \ n \geq 1$ and the embeddings
\begin{align*}
\iota_0: A_0 \to A_1, \qquad \qquad x &\mapsto (x,x) \\ 
\iota_n: A_n \to A_{n+1},  \qquad (x,A) &\mapsto (x, \bigl(\begin{smallmatrix}  x & 0\\  0 & A
 \end{smallmatrix}\bigl)).
\end{align*}
Then, $\Lambda$ is the branching graph of the inductive sequence 
\[ A_0 \hookrightarrow^{\iota_0} A_1 \hookrightarrow^{\iota_1} A_2 \hookrightarrow^{\iota_2} \dots. \] 
\end{lemma}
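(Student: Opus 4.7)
The plan is to verify that the induction-restriction graph of the inductive sequence $(A_n, \iota_n)_{n \geq 0}$ coincides with $\Lambda$ vertex by vertex and edge by edge. First, I identify the simple $A_n$-modules: since $A_n = \C[S_n] \oplus M_n(\C[S_{n-1}])$ is a direct sum of two semisimple algebras and $M_n(\C[S_{n-1}])$ is Morita equivalent to $\C[S_{n-1}]$, the simples of the first summand are the Specht modules $V_\lambda$ indexed by $\lambda \in \Y_n$, while those of the second summand are of the form $\C^n \otimes V_\nu$ indexed by $\nu \in \Y_{n-1}$. This reproduces the vertex set $\Lambda_n = \Y_n \sqcup \Y_{n-1}$.

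Next, I compute the restriction of each simple $A_{n+1}$-module along $\iota_n$; since each simple is annihilated by one of the two summands of $A_{n+1}$, there are two cases. For $V_\lambda$ with $\lambda \in \Y_{n+1}$, the map $\iota_n$ sends $\C[S_n] \subset A_n$ into $\C[S_{n+1}] \subset A_{n+1}$ via the natural inclusion $S_n \hookrightarrow S_{n+1}$, while $M_n(\C[S_{n-1}])$ lands in the other summand of $A_{n+1}$ and thus acts as zero. The classical Young branching rule then gives $V_\lambda|_{A_n} = \bigoplus_{\mu = \lambda - \Box} V_\mu$, which translates into one edge from $\lambda \in \Y_{n+1} \subset \Lambda_{n+1}$ to each $\mu = \lambda - \Box$ in $\Y_n \subset \Lambda_n$. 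For $\C^{n+1} \otimes V_\mu$ with $\mu \in \Y_n$, I use the block decomposition $\C^{n+1} = \C \oplus \C^n$ encoded by $\iota_n(x, A) = (x, \text{diag}(x, A))$. On $\C \otimes V_\mu$ only the $\C[S_n]$-summand of $A_n$ acts nontrivially, giving back $V_\mu$; on $\C^n \otimes V_\mu$ only $M_n(\C[S_{n-1}])$ acts, and the decomposition $V_\mu|_{S_{n-1}} = \bigoplus_{\nu = \mu - \Box} V_\nu$ of the entry-wise lift along $\C[S_{n-1}] \hookrightarrow \C[S_n]$ yields the $M_n(\C[S_{n-1}])$-decomposition $\C^n \otimes V_\mu = \bigoplus_{\nu = \mu - \Box} \C^n \otimes V_\nu$. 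Hence $(\C^{n+1} \otimes V_\mu)|_{A_n} = V_\mu \oplus \bigoplus_{\nu = \mu - \Box} \C^n \otimes V_\nu$, producing one edge from $\mu \in \Y_n \subset \Lambda_{n+1}$ to its copy in $\Y_n \subset \Lambda_n$ plus one edge to each $\nu = \mu - \Box \in \Y_{n-1} \subset \Lambda_n$.

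Putting both cases together reproduces exactly the stated edge rule for $\Lambda$ (namely $\alpha \in \Lambda_{n+1}$ is connected to $\beta \in \Lambda_n$ iff $\alpha = \beta$ or $\alpha = \beta + \Box$), and a quick check for $n = 0$ using $\iota_0 : \C \to \C \oplus \C$, $x \mapsto (x, x)$, handles the base case. The only genuinely delicate step is Case 2, where one has to carefully track how the tensor factor $V_\mu$ inside the simple $M_n(\C[S_{n-1}])$-module $\C^n \otimes V_\mu$ decomposes once the matrix entries are pulled back along $\C[S_{n-1}] \hookrightarrow \C[S_n]$; once this small Morita-theoretic observation is in place, the remainder is straightforward bookkeeping with the Young lattice branching rule.
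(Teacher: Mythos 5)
Your proof is correct and fills in exactly the verification that the paper leaves implicit: the paper deduces the lemma ``directly'' from the observation that the skeleton graph $S(\Lambda)$ is the branching graph of $\C \hookrightarrow \C\oplus M_1 \hookrightarrow \C\oplus M_2 \hookrightarrow \dots$, which is the same block-diagonal restriction computation you carry out explicitly with the $\C[S_n]$-decorations in place. Your Case 2 analysis (splitting $\C^{n+1}=\C\oplus\C^n$ and applying the Young branching rule to the matrix entries in $\C[S_{n-1}]\subset\C[S_n]$) is precisely the content of the paper's skeleton-plus-decoration argument, so no further comparison is needed.
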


For completeness, we will also describe the trace simplex of the principal graph $\Lambda$.

\begin{proposition} \label{prop.tracesLambda}
Let $A_{\infty}$ be the direct limit algebra of the inductive sequence $(A_n)_{n \geq 0}$ of Lemma \ref{lem.bistochbranching}. The trace simplex of $A_{\infty}$ (and thus the minimal boundary of $\Lambda$) is homeomorphic to the Thoma simplex $T$, i.e. the trace simplex of $\C[S_{\infty}]$.
\end{proposition}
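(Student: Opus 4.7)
The plan is to apply the Vershik-Kerov ergodic method (Theorem \ref{thm.ergodicmethod}) directly to $\Lambda$ and show that every ergodic central measure $\bP$ on $\Lambda$ is supported on paths that remain forever in the ``main layer'' $\Y_n \subset \Lambda_n$ (the irreducibles of the summand $\C[S_n] \subset A_n$). Once this is established, restriction to $\Omega_\Y$ gives a homeomorphism $\partial \Lambda \cong \partial \Y$, and the statement follows from Thoma's theorem (Theorem \ref{thm.Thoma}). A quick inspection of the restriction rules for the embeddings $\iota_n$ shows that every forward edge from a main-layer vertex $\lambda \in \Y_n$ goes either to $\lambda + \Box$ in the main layer at level $n+1$ or to the identical copy of $\lambda$ in the ``ideal layer'' $\Y_n \subset \Lambda_{n+1}$ (the irreducibles of $M_{n+1}(\C[S_n])$), while every forward edge from an ideal-layer vertex $\mu \in \Y_{n-1}$ goes only to $\mu + \Box$ in the ideal layer at level $n+1$. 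Consequently an infinite path in $\Lambda$ either stays in the main layer for all $n$ (in which case it is nothing but a path in $\Y$) or enters the ideal layer at some step and never leaves.

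To make the ergodic method effective I would first compute the dimensions. The main-layer recursion $\dim_\Lambda(n,\lambda) = \sum_{\lambda' = \lambda - \Box} \dim_\Lambda(n-1,\lambda')$ coincides with the Young-graph recursion, so $\dim_\Lambda(n,\lambda) = \dim_\Y(\lambda)$ for $\lambda \in \Y_n$. For $\mu \in \Y_{n-1}$ in the ideal layer, I would decompose a path to $\mu$ according to the step $k$ at which it enters the ideal layer and invoke the elementary identity
\[ \sum_{\nu \vdash k-1,\ \nu \subset \mu} f^\nu\, f^{\mu/\nu} = f^\mu \]
(obtained by restricting a standard Young tableau of shape $\mu$ to its first $k-1$ entries), which gives $\dim_\Lambda(n,\mu) = n\, \dim_\Y(\mu)$.

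The key step is now the Vershik-Kerov argument: along a $\bP$-generic infinite path $(\gamma_n)$ on which the dimension ratios converge, one shows that $\bP(X_m = \mu) = 0$ for every $\mu \in \Y_{m-1} \subset \Lambda_m$ in the ideal layer. If $(\gamma_n)$ stays in the main layer, then $\dim_\Lambda(\mu,\gamma_n) = 0$ since the ideal layer has no forward edges back into the main layer. Otherwise $(\gamma_n)$ is eventually in the ideal layer, whence $\dim_\Lambda(\mu,\gamma_n) = f^{\gamma_n/\mu}$ (paths within the ideal layer are exactly skew standard tableaux), and the same identity yields $f^{\gamma_n/\mu} \leq f^{\gamma_n}/f^\mu$, so
\[ \frac{\dim_\Lambda(\mu,\gamma_n)}{\dim_\Lambda(\gamma_n)} \leq \frac{1}{n\, \dim_\Y(\mu)} \longrightarrow 0. \]
In either case $\bP(X_m = \mu) = 0$, hence $\bP$ is fully supported on $\Omega_\Y$, and since the dimensions agree on the main layer the restriction map $\partial \Lambda \to \partial \Y$ is a homeomorphism, identifying both with $T$. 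The main conceptual obstacle is the ideal-layer dimension formula, but it collapses to the standard Young-tableau restriction identity above, after which the remainder of the argument is a direct adaptation of Lemma \ref{lem.zeroprobnew}.
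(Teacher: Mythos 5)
Your proof is correct, but it takes a genuinely different route from the paper. The paper argues entirely on the algebra side: using the explicit description $A_n = \C[S_n] \oplus M_n(\C[S_{n-1}])$ from Lemma \ref{lem.bistochbranching}, it writes an arbitrary trace as the convex combination $\tau'|_{A_n} = c_n \tau_n^{(1)} + (1-c_n)(\tfrac{\tr_n}{n}\ot\tau_n^{(2)})$ and evaluates the compatibility condition $\tau'|_{A_{n+1}}\circ\iota_n = \tau'|_{A_n}$ on the element $(0,I)$ to obtain the recursion $n c_{n-1} = (n-1)c_n + 1$ with $c_0 = 1$, which forces $c_n = 1$ for all $n$; hence every trace kills the matrix summands and factors through $\C[S_\infty]$. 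You instead work on the graph side with the ergodic method, in the style of Lemma \ref{lem.zeroprobnew}: your description of the two layers of $\Lambda$ and their edges is accurate, your dimension counts $\dim_\Lambda(n,\lambda) = \dim_\Y(\lambda)$ on the main layer and $\dim_\Lambda(n,\mu) = n\dim_\Y(\mu)$ on the ideal layer are correct (the latter via the restriction identity for standard tableaux, summed over the $n$ possible entry points into the ideal layer, and consistent with the simple module $\C^n\ot V_\mu$ of $M_n(\C[S_{n-1}])$ having dimension $n\dim_\Y(\mu)$), and the resulting bound $\dim_\Lambda(\mu,\gamma_n)/\dim_\Lambda(\gamma_n) \leq 1/(n\dim_\Y(\mu))$ correctly forces $\bP(X_m=\mu)=0$ on the ideal layer. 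The paper's argument is shorter and bypasses the ergodic method entirely; yours is methodologically uniform with the treatment of $\Gamma_B$ elsewhere in the paper, needs only the branching graph rather than the algebra presentation, and yields an explicit quantitative decay rate. Both establish that every ergodic central measure lives on the copy of $\Y$ with matching dimensions, whence the identification with the Thoma simplex.
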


\begin{proof}
First, let $\tau$ be a trace on $\C[S_{\infty}]$ and define a trace $\tau_n: A_n \to \C$ by
\[ \tau_n((x,A)) = \tau|_{\C[S_n]} (x) \qquad \text{for } (x,A) \in A_n = \C[S_n] \oplus M_n(\C[S_{n-1}]). \]
Clearly, $\tau_n \circ \iota_{n-1} = \tau_{n-1} $ on $A_{n-1}$ for all $n \geq 1$, so that $\tau$ yields a well-defined trace on $A_{\infty}$. \\
To show that every trace $\tau'$ on $A_{\infty}$ is of the above form, write 
\begin{align*}
\tau'|_{A_n} = c_n \tau_n^{(1)} + (1-c_n) (\tfrac{\tr_n}{n}\ot \tau_n^{(2)})
\end{align*}
where $c_n \in [0,1]$ and where $\tau_n^{(1)}$ is a trace on $\C[S_n]$ and $\tau_n^{(2)}$ is a trace on $\C[S_{n-1}]$. Now, choosing $x=0 \in \C[S_{n-1}]$ and $A$ as the identity $A =I \in M_{n-1}(\C[S_{n-1}])$ in the equation
\begin{align*}
c_n \tau_n^{(1)}(\iota_{n-1}(x,A)) + (1-c_n) (\tfrac{\tr_n}{n}\ot \tau_n^{(2)})(\iota_{n-1}(x,A)) = c_{n-1} \tau_{n-1}^{(1)} + (1-c_{n-1}) (\tfrac{\tr_{n-1}}{n-1}\ot \tau_{n-1}^{(2)})
\end{align*}
yields the recursion $c_0=1$ and $n c_{n-1} = (n-1)c_n +1$ for $n\geq 1$. Since this recursion forces $c_n=1$ for all $n\geq 0$, $\tau'$ is indeed of the required form. 
\end{proof}

\section{The branching graph of $A_{(\cC_H,\delta)}(\infty)$ and its boundary}

In this section, we will discuss the branching graph of the tower of algebras $A_{(\cC_H,\delta)}(0) \subset A_{(\cC_H,\delta)}(1) \subset \dots $. In the easy quantum group setup of Banica and Speicher \cite{BS09}, are dual to the hyperoctahedral group $H_n$ when $\delta =n \in \N$, although, as always, we assume the underlying loop parameter to be generic, meaning in particular that $\delta \neq n$. \\
The branching graph $\Gamma_H$ of the tower $A_{(\cC_H,\delta)}(0) \subset A_{(\cC_H,\delta)}(1) \subset \dots $ has been computed in \cite{Or05}. Its vertices at level $n$ are pairs of Young diagrams $(\lambda,\mu)$ that satisfy the condition $2|\lambda|+ |\mu| \leq n$. Two vertices $(\lambda,\mu) \in \Gamma_{H,n}, \ (\xi,\eta) \in \Gamma_{H,n+1}$ are connected by an edge if and only if they are related by one of the following moves
\begin{align*}
&(\xi,\eta) \ = \ (\lambda,\mu + \Box), \qquad \qquad (\xi,\eta) \ = \ (\lambda,\mu + \Box), \\ &(\xi,\eta) \ = \ (\lambda + \Box,\mu - \Box), \qquad (\xi,\eta) \ = \ (\lambda - \Box,\mu + \Box).
\end{align*}

Once again, the branching graph of $\Gamma_H$ can be understood as the pascalization $\Gamma_H = \cP(\Theta)$ of a smaller underlying graph $\Theta$, which we will call the \emph{coupled Young graph} and which will describe next.

\subsection{The coupled Young graph} \label{subsec.coupledYoung}

Although still related to the Young graph, the coupled Young graph is more involved than the principal graphs for the partition, Brauer and rook-Brauer algebras. It might be of independent interest as it describes an interesting growth dynamic for Young diagrams. One diagram can be thought of as a 'first order object' that can grow boxes out of an infinite reservoir of boxes whereas the 'second order' diagram can only add boxes that it 'steals' from the first order diagram.

\begin{definition} \label{def.CatalanYoung}
The \emph{coupled Young} graph $\Theta$ is the branching graph whose $n$-th level set $\Theta_n$ is the disjoint union 
\[\Theta_n = \bigsqcup_{k=0}^{\lfloor n/2 \rfloor} \Y_k \times \Y_{n-2k} \]
with an edge between $(\lambda, \mu) \in \Y_k \times \Y_{n-2k}$ and $(\lambda', \mu') \in \Y_j \times \Y_{(n+1)-2j}$ if and only if either
\begin{align*}
j=k \text{ and } \lambda'=\lambda, \ \mu' = \mu + \Box \qquad \text{or } \qquad j=k+1 \text{ and } \lambda'=\lambda +\Box, \ \mu' = \mu - \Box.
\end{align*}
\end{definition} 

The coupled Young graph has the following convenient conceptual interpretation. Draw the semi-Pascal graph with vertices on leven $n$ labelled by $\Y_k \times \Y_{n-2k}, k=1,\dots \lfloor n/2 \rfloor$ from right to left. The condition on edges in the Catalan-Young graph then means that the sets $\Y_k \times \Y_{n-2k}$ and $\Y_j \times \Y_{(n+1)-2j}$ must be connected in the Young graph combined with the 'internal' condition of only connecting (pairs of) partitions in these sets if they are connected or equal in $\Y$.

\begin{figure}[h!]
\begin{center}
\includegraphics[scale=0.4]{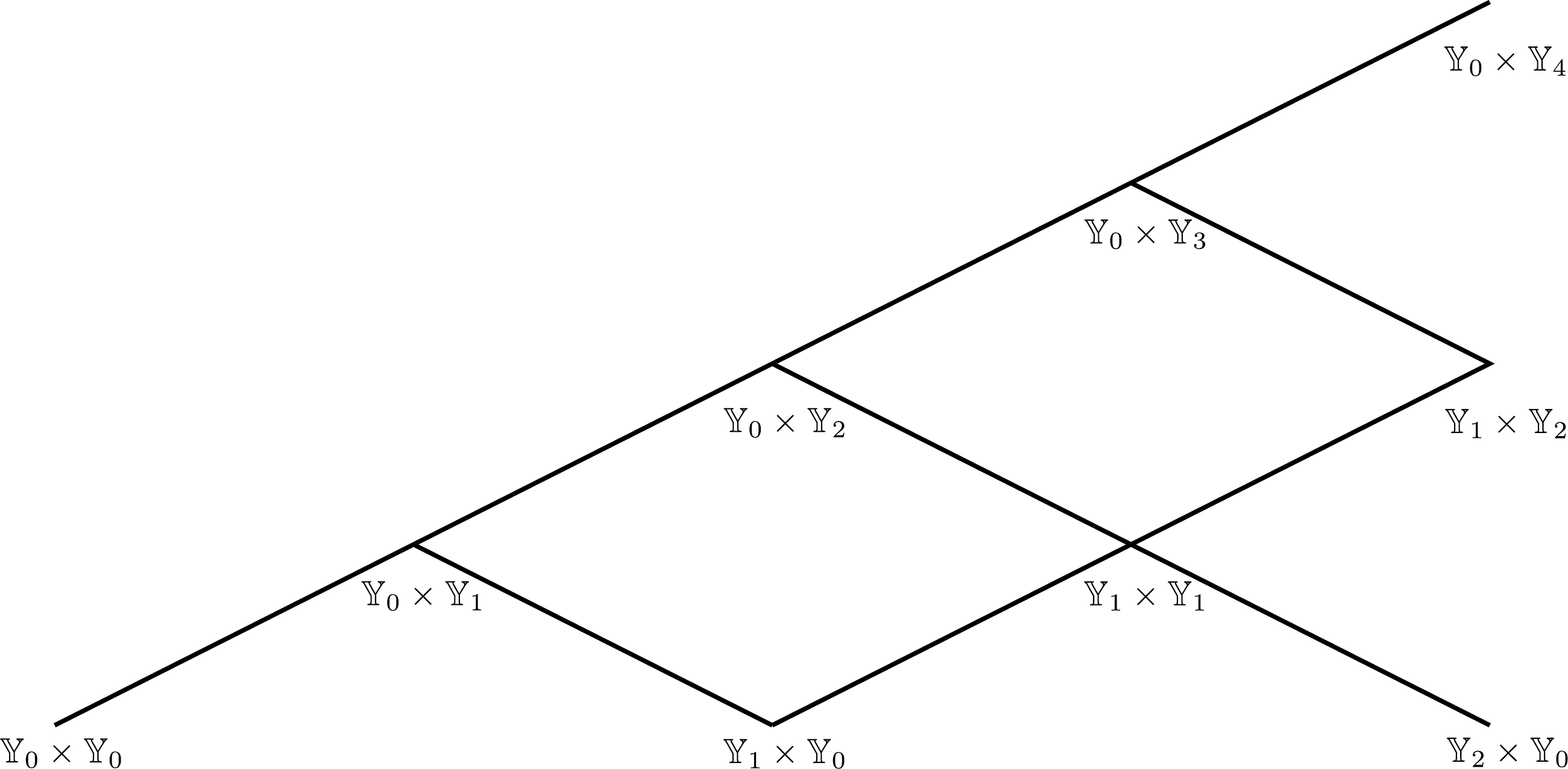}
\end{center}
\caption{\label{fig.coupledYoung} Schematic drawing of the coupled Young graph.}
\end{figure}

As alluded to above, the following is a direct consequence of the definition of the two graphs $\Gamma_H$ and $\Theta$.

\begin{proposition} \label{prop.Paschyperoctahedral}
The branching graph $\Gamma_H$ can be identified with the pascalization $\cP(\Theta)$ of the coupled Young graph $\Theta$ through $(\lambda,\mu) \in \Gamma_{H,n} \mapsto (n,(\lambda,\mu)) \in \cP(\Theta)_n$.
\end{proposition}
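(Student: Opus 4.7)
The proposition is essentially a bookkeeping statement that unpacks the definition of pascalization and compares it term-by-term with the already-computed branching rule for $\Gamma_H$. So my plan is to just verify two things: that the proposed map is a bijection on vertex sets, and that it preserves adjacencies with the right multiplicities.

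\emph{Vertex matching.} By Definition \ref{def.pascalization}, the level-$n$ vertices of $\cP(\Theta)$ are pairs $(n,\gamma)$ with $\gamma\in\Theta_k$, $k\leq n$ and $k\equiv n\pmod 2$. Unpacking Definition \ref{def.CatalanYoung}, a vertex $\gamma\in\Theta_k$ is a pair of Young diagrams $(\lambda,\mu)$ with $2|\lambda|+|\mu|=k$. So the level-$n$ vertices of $\cP(\Theta)$ are in natural bijection with pairs $(\lambda,\mu)$ such that $2|\lambda|+|\mu|\leq n$ and $2|\lambda|+|\mu|\equiv n\pmod 2$. For $\Gamma_H$, the level-$n$ vertices are pairs $(\lambda,\mu)$ with $2|\lambda|+|\mu|\leq n$; the parity condition is automatic because every allowed move in $\Gamma_H$ alters $2|\lambda|+|\mu|$ by exactly $\pm 1$ (the moves $\mu\mapsto\mu\pm\Box$ give $\pm 1$, and the mixed moves $(\lambda\pm\Box,\mu\mp\Box)$ give $\pm 2\mp 1=\pm 1$), so starting from $(\emptyset,\emptyset)\in\Gamma_{H,0}$ any reachable vertex at level $n$ satisfies $2|\lambda|+|\mu|\equiv n\pmod 2$, and non-reachable vertices can be discarded as they carry zero dimension.

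\emph{Edge matching.} It suffices to show that for each pair $(\lambda,\mu)$ with $2|\lambda|+|\mu|\leq n$, the set of edges in $\cP(\Theta)$ emanating upward from $(n,(\lambda,\mu))$ coincides under the proposed bijection with the set of edges in $\Gamma_H$ emanating upward from $(\lambda,\mu)\in\Gamma_{H,n}$. By Definition \ref{def.pascalization}, the upward edges from $(n,(\lambda,\mu))$ in $\cP(\Theta)$ correspond to all edges in $\Theta$ incident to $(\lambda,\mu)$ — both the forward edges into $\Theta_{k+1}$ and the backward edges into $\Theta_{k-1}$, where $k=2|\lambda|+|\mu|$. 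Reading off Definition \ref{def.CatalanYoung}, the forward edges from $(\lambda,\mu)$ in $\Theta$ land in $\{(\lambda,\mu+\Box),\,(\lambda+\Box,\mu-\Box)\}$, and reversing those same rules at the next level down gives the backward edges landing in $\{(\lambda,\mu-\Box),\,(\lambda-\Box,\mu+\Box)\}$. These four targets are precisely the four upward neighbours of $(\lambda,\mu)$ in $\Gamma_H$ listed in the description at the start of this section (reading the two identical entries in that list as the evident typo for $(\lambda,\mu\pm\Box)$), and each edge has multiplicity one on both sides.

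\emph{Main obstacle.} Honestly, there is no real obstacle — the statement is a reformulation of definitions. The only conceptual point worth flagging is that, via pascalization, the two simple forward moves $(\lambda,\mu)\mapsto(\lambda,\mu+\Box)$ and $(\lambda,\mu)\mapsto(\lambda+\Box,\mu-\Box)$ of $\Theta$ are paired with their reversals to produce the four-move branching rule of $\Gamma_H$; this is what makes the mixed move $(\lambda-\Box,\mu+\Box)$ correspond naturally to a \emph{decrease} in the $\Theta$-level $k=2|\lambda|+|\mu|$ rather than an increase. With this observation the verification is immediate.
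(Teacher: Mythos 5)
Your verification is correct and matches the paper exactly: the paper offers no written proof, stating only that the proposition "is a direct consequence of the definition of the two graphs," and your vertex- and edge-matching is precisely the unpacking that assertion relies on. You also rightly flag the duplicated entry $(\xi,\eta)=(\lambda,\mu+\Box)$ in the paper's list of moves for $\Gamma_H$ as a typo for $(\lambda,\mu-\Box)$, and correctly handle the implicit parity condition $2|\lambda|+|\mu|\equiv n \pmod 2$ on reachable vertices.
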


\begin{proposition} \label{prop.dimCYgraph}
For $(\lambda,\mu)\in \Theta_n$, we have
\begin{align*}
\dim_{\Theta}(\lambda,\mu) = \frac{1}{2^{|\lambda|}} \frac{\left(|\mu|+2|\lambda|\right)!}{|\lambda|! |\mu|!} \dim_{\Y}( \lambda) \dim_{\Y}(\mu) = \binom{n}{|\mu|} (n-|\mu|-1)!! \dim_{\Y}( \lambda) \dim_{\Y}(\mu),
\end{align*}
where $(n-|\mu|-1)!!$ is the double factorial.
\end{proposition}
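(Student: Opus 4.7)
The plan is to prove the formula by induction on $n = |\mu| + 2|\lambda|$, using the standard identities for dimensions on the Young graph. First I would note that the two stated expressions agree via $(2k-1)!! = (2k)!/(2^k k!)$ applied with $k = |\lambda|$, so it suffices to establish the second form. Set $l = |\lambda|$, $m = |\mu|$, and
\[ D(\lambda,\mu) := \binom{n}{m}(2l-1)!!\,\dim_{\Y}(\lambda)\dim_{\Y}(\mu), \]
with the convention $(-1)!! = 1$. The base case $n=0$ is immediate since $(\lambda,\mu) = (\emptyset,\emptyset)$ and $D(\emptyset,\emptyset) = 1 = \dim_{\Theta}(\emptyset,\emptyset)$.

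For the inductive step, the key input is the recursion arising from Definition \ref{def.CatalanYoung}:
\begin{align*}
\dim_{\Theta}(\lambda,\mu) \;=\; \sum_{\mu^- \nearrow \mu} \dim_{\Theta}(\lambda,\mu^-) \;+\; \sum_{\substack{\lambda^- \nearrow \lambda \\ \mu \nearrow \mu^+}} \dim_{\Theta}(\lambda^-, \mu^+),
\end{align*}
where the first sum comes from type A predecessors (add a box to $\mu$) and the second from type B predecessors (remove a box from $\lambda$ and add one to $\mu$). Plugging in the induction hypothesis and pulling out the common factor $\dim_{\Y}(\lambda)\dim_{\Y}(\mu)$ using the two classical identities
\[ \sum_{\nu^- \nearrow \nu} \dim_{\Y}(\nu^-) = \dim_{\Y}(\nu), \qquad \sum_{\nu \nearrow \nu^+} \dim_{\Y}(\nu^+) = (|\nu|+1)\dim_{\Y}(\nu) \]
(the first is restriction of an SYT, the second is Young's branching rule for $\operatorname{Ind}_{S_{|\nu|}}^{S_{|\nu|+1}}$), I would reduce the claim to the numerical equality
\[ \binom{n-1}{m-1}(2l-1)!! \;+\; (m+1)\binom{n-1}{m+1}(2l-3)!! \;=\; \binom{n}{m}(2l-1)!!. \]

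The final step is a one-line binomial manipulation: from $\binom{n-1}{m+1} = \binom{n-1}{m}(n-1-m)/(m+1) = \binom{n-1}{m}(2l-1)/(m+1)$ one gets $(m+1)\binom{n-1}{m+1}(2l-3)!! = (2l-1)!!\binom{n-1}{m}$, after which Pascal's rule $\binom{n-1}{m-1} + \binom{n-1}{m} = \binom{n}{m}$ closes the identity. I do not expect a genuine obstacle here; the only subtle point is the boundary case $l=0$, where the second sum is vacuous and one verifies directly that $\dim_{\Theta}(\emptyset,\mu) = \dim_{\Y}(\mu)$, matching $D(\emptyset,\mu)$. An alternative route would be to interpret a path to $(\lambda,\mu)$ as the pairing of a standard Young tableau of shape $\lambda$ (recording type B moves) with an oscillating walk on $\Y$ of length $n$ ending at $\mu$, and invoke Stanley's enumeration of such walks; however the inductive calculation above is shorter and self-contained.
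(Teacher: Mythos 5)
Your proposal is correct and follows essentially the same route as the paper: induction on $n$ using the predecessor recursion for $\dim_\Theta$, factoring out $\dim_{\Y}(\lambda)\dim_{\Y}(\mu)$ via the two standard Young-graph identities, and reducing to a purely numerical recursion (your binomial identity is exactly the paper's recursion $M(k,l)=(l+1)M(k-1,l+1)+M(k,l-1)$ with the closed form substituted in). The only cosmetic difference is that the paper first establishes the existence of the array $M(k,l)$ abstractly and then checks that $\tfrac{1}{2^k}\tfrac{(l+2k)!}{k!\,l!}$ satisfies the recursion, whereas you plug the closed form in directly.
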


\begin{remark}
The numbers $M(k,l) = \tfrac{1}{2^k} \tfrac{(l+2k)!}{k!l!} = \binom{2k+l}{l} (2k-1)!!$ appearing in Proposition \ref{prop.dimCYgraph} are related to the \emph{Bessel numbers of the first kind} $b_{k,l}$ through the formula $M(k,l) = b_{k+l,k}$ when $k\geq 1, l \geq 0$. The Bessel number of the first kind $b_{k,l}$ is the coefficient of $x^l$ in the $k$-th Bessel polynomial $Y_k$ (see the entry A001498 in the OEIS). 
\end{remark}

\begin{proof}[Proof of Proposition \ref{prop.dimCYgraph}]
We start the proof with the dimension formula
\begin{align} \label{eq.decompdimensions}
\dim_{\Theta}(\lambda,\mu) = \sum_{\eta = \lambda - \Box \atop \nu = \mu + \Box} \dim_{\Theta}(\eta,\nu) + \sum_{\nu = \mu - \Box} \dim_{\Theta}(\lambda,\nu) \qquad (\lambda,\mu) \in \Theta_n
\end{align}
that follows directly from the definition of $\Theta$. It then follows inductively that there exist numbers  $M(k,l), \ k,l \geq 0$ with $M(0,l)=1$ for all $l\geq 0$ such that
\begin{align} \label{eq.help1}
\dim_{\Theta}(\lambda,\mu) = M(|\lambda|, |\mu|) \dim_{\Y}( \lambda) \dim_{\Y}(\mu)
\end{align}
for all $(\lambda,\nu) \in \Theta_n, \ n \geq 0$. Plugging \ref{eq.help1} into Equation \ref{eq.decompdimensions}, we see that
\begin{align*}
M(|\lambda|, |\mu|) =  M(|\lambda|-1,|\mu|+1) (|\mu|+1) + M(|\lambda|,|\mu|-1),
\end{align*}
so that the numbers $M(k,l)$ are uniquely determined by
\begin{itemize}
\item  $M(0,l)=1$ for all $l\geq 0$,
\item the recursion $M(k,l) = (l+1)M(k-1,l+1) + M(k,l-1)$ for $k\geq 1, l \geq 0$.
\end{itemize}
It is now easy to verify that the numbers $M(k,l) = \tfrac{1}{2^k} \tfrac{(l+2k)!}{k!l!}$ indeed satisfy this recursion.
\end{proof}

For vertices $v,w$ of a branching graph $\Gamma$, let us denote by $\mathrm{Pa}_{\Gamma}(v,w)$ the set of paths from $v$ to $w$. \\
Let $(\lambda,\mu) \in \Theta_m$ and $(\tilde{\lambda},\tilde{\mu}) \in \Theta_n, \ n > m$. Consider paths $p_1 = (\lambda = \lambda_0 \nearrow \lambda_1\nearrow \dots \nearrow \tilde{\lambda})$ in $\mathrm{Pa}_{\Y}(\lambda,\tilde{\lambda})$ and $p_2 = (\mu = \mu_0 \nearrow \mu_1 \nearrow \dots \nearrow \mu_{n-m}) = \tilde{\mu} $ in $\mathrm{Pa}_{\cP(\Y)}(\mu,\tilde{\mu})$. Note that here, we consider $\mu$ and $\tilde{\mu}$ as vertices on the pascalized Young graph $\cP(\Y)$. Given these two paths $p_1, p_2$, we can construct a new path $p_1 \oplus p_2 := (\lambda,\mu) \nearrow (\eta_1,\nu_1) \nearrow \dots \nearrow (\eta_{n-m},\nu_{n-m}) = (\tilde{\lambda},\tilde{\mu})$ in $\mathrm{Pa}_{\Theta}((\lambda,\mu), (\tilde{\lambda},\tilde{\mu}))$ of length $n-m$ by setting
\begin{align*}
\nu_k = \mu_k \qquad \text{and } \qquad \eta_k = \lambda_{\mathrm{down}(\mu_k)}
\end{align*}
where $\mathrm{down}(\mu_k)$ denotes the number of down steps on the path $\mu_0 \nearrow \dots \nearrow \mu_k$, i.e. the number of times $1 \leq l \leq k$ for which $\mu_l = \mu_{l-1} - \Box$. Moreover, since we can decompose any path $p= (\lambda,\mu) \nearrow (\lambda_1,\mu_1) \nearrow \dots \nearrow (\lambda_{n-m},\mu_{n-m}) = (\tilde{\lambda},\tilde{\mu})$ into paths $p_1 \in \mathrm{Pa}_{\Y}(\lambda,\tilde{\lambda})$ and $p_2 \in \mathrm{Pa}_{\cP(\Y)}(\mu,\tilde{\mu})$ such that $p = p_1 \oplus p_2$, the following result is immediate.

\begin{proposition} \label{prop.pathdecompcoupledYoung}
The map 
\begin{align*}
\mathrm{Pa}_{\Y}(\lambda,\tilde{\lambda}) \times \mathrm{Pa}_{\cP(\Y)}(\mu,\tilde{\mu}) \to \mathrm{Pa}_{\Theta}((\lambda,\mu), (\tilde{\lambda},\tilde{\mu})), \qquad (p_1,p_2) \mapsto p_1 \oplus p_2
\end{align*}
is a bijection. In particular we have 
\begin{align*}
\dim_{\Theta}((\lambda,\mu);(\tilde{\lambda},\tilde{\mu})) = \dim_{\Y}(\lambda;\tilde{\lambda}) \cdot \dim_{\cP(\Y)}(\mu; \tilde{\mu}).
\end{align*}
\end{proposition}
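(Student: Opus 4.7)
The plan is to prove the proposition by constructing an explicit inverse to the map $(p_1, p_2) \mapsto p_1 \oplus p_2$ and then deriving the dimension formula as an immediate corollary of the cardinality identity.

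First, I would define a candidate inverse. Given a path $p = ((\lambda_0,\mu_0) \nearrow (\lambda_1,\mu_1) \nearrow \dots \nearrow (\lambda_{n-m},\mu_{n-m}))$ in $\mathrm{Pa}_{\Theta}((\lambda,\mu),(\tilde\lambda,\tilde\mu))$, the edge rules for $\Theta$ in Definition \ref{def.CatalanYoung} imply that $\mu_{k+1} = \mu_k \pm \Box$ for every $k$, so the sequence $\pi_2(p) := (\mu_0 \nearrow \dots \nearrow \mu_{n-m})$ is a path in $\cP(\Y)$ from $\mu$ to $\tilde\mu$. Similarly, the $\lambda$-component satisfies either $\lambda_{k+1} = \lambda_k$ or $\lambda_{k+1} = \lambda_k + \Box$, so after collapsing consecutive repetitions one obtains a strictly increasing sequence $\pi_1(p)$, which is a path in $\Y$ from $\lambda$ to $\tilde\lambda$.

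Second, I would verify that $\pi := (\pi_1, \pi_2)$ is an inverse to $\oplus$. For one direction, given $p$, one notes that $\lambda_k - \lambda_0$ is built up by exactly those indices $1 \leq l \leq k$ at which a type-2 move $(\lambda_{l-1}, \mu_{l-1}) \to (\lambda_{l-1}+\Box, \mu_{l-1}-\Box)$ occurs, and these are precisely the indices where $\mu_l = \mu_{l-1} - \Box$. Hence the number of increments of $\lambda$ up to step $k$ equals $\mathrm{down}(\mu_k)$, and consequently $\lambda_k$ coincides with the $\mathrm{down}(\mu_k)$-th vertex of $\pi_1(p)$; this is exactly the definition of $\pi_1(p) \oplus \pi_2(p)$, so $\pi_1(p) \oplus \pi_2(p) = p$. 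For the other direction, given $(p_1,p_2)$, the $\mu$-component of $p_1 \oplus p_2$ is $p_2$ by construction, so $\pi_2(p_1 \oplus p_2) = p_2$, and the $\lambda$-component $\eta_k = (p_1)_{\mathrm{down}(\mu_k)}$ increments precisely at the down-steps of $p_2$, so collapsing repeats recovers $p_1$.

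Third, I would check that $p_1 \oplus p_2$ actually lies in $\mathrm{Pa}_{\Theta}((\lambda,\mu),(\tilde\lambda,\tilde\mu))$. Each step is legal in $\Theta$: at a step where $\mu_{k+1} = \mu_k + \Box$ (a type-1 move), $\eta$ is unchanged; at a step where $\mu_{k+1} = \mu_k - \Box$ (a type-2 move), $\eta_{k+1} = (p_1)_{\mathrm{down}(\mu_k)+1}$ differs from $\eta_k$ by a single added box since $p_1$ is a path in $\Y$. The nontrivial verification is that the endpoint equals $(\tilde\lambda,\tilde\mu)$: one must show $\mathrm{down}(\mu_{n-m}) = |\tilde\lambda| - |\lambda|$, which is the length of $p_1$. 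Writing $u$ and $d$ for the number of up/down steps in $p_2$, we have $u+d = n-m$ and $u-d = |\tilde\mu| - |\mu|$, so $d = \tfrac{(n-m) - (|\tilde\mu|-|\mu|)}{2}$. Since $(\lambda,\mu) \in \Theta_m$ and $(\tilde\lambda,\tilde\mu) \in \Theta_n$ force $2|\lambda|+|\mu| = m$ and $2|\tilde\lambda|+|\tilde\mu| = n$, this expression equals $|\tilde\lambda| - |\lambda|$, as required.

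Finally, the dimension identity $\dim_{\Theta}((\lambda,\mu);(\tilde\lambda,\tilde\mu)) = \dim_{\Y}(\lambda;\tilde\lambda) \cdot \dim_{\cP(\Y)}(\mu;\tilde\mu)$ follows by taking cardinalities on both sides of the bijection. The only mildly subtle point in the entire argument is the length-matching verification in step three; this is the place where the bigrading constraint $2|\lambda|+|\mu| = m$ of the coupled Young graph is used, and everything else is a bookkeeping check that turns the two edge-types of $\Theta$ into the two edge-types of $\cP(\Y)$ while decoupling the $\lambda$- and $\mu$-dynamics.
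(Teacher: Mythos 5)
Your proposal is correct and follows essentially the same route as the paper, which defines the map $p_1\oplus p_2$ via $\eta_k=\lambda_{\mathrm{down}(\mu_k)}$ and then simply asserts that the inverse decomposition is immediate. You supply the details the paper omits --- in particular the endpoint/length check $\mathrm{down}(\mu_{n-m})=|\tilde\lambda|-|\lambda|$ via the bigrading $2|\lambda|+|\mu|=m$ --- which is a welcome but not substantively different elaboration.
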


We also remark that the path decomposition $p = p_1 \oplus p_2$ is compatible with the path structure on $\Theta$ in the following sense. Let $p = (\lambda_0,\mu_0) \nearrow (\lambda_1,\mu_1) \nearrow \dots \nearrow (\lambda_{k},\mu_{k}) $ and assume that the last move in the path is $(\lambda_k,\mu_k) = (\lambda_{k-1},\mu_{k-1} + \Box )$. If we decompose the subpath $q$ up to step $k-1$, i.e. $p = q \nearrow (\lambda_k,\mu_k)$ as $q=q_1 \oplus q_2$, then we obtain
\begin{align*}
p = q_1 \oplus \left( q_2 \nearrow \mu_k \right).
\end{align*}
Similarly, if the last move is $(\lambda_k,\mu_k) = (\lambda_{k-1} + \Box,\mu_{k-1} - \Box )$, then
\begin{align*}
p = (q_1 \nearrow \lambda_k) \oplus (q_2 \nearrow \mu_k).
\end{align*}
The observations above make it easy to determine the boundary of the coupled Young graph. First, we identify the Young graph $\Y$ as a subgraph of $\Theta$ by mapping vertices $\mu \in \Y_n$ to $(\emptyset,\mu) \in \Theta_n$ and edges $\mu \nearrow \tilde{\mu}$ to $(\emptyset,\mu) \nearrow (\emptyset,\tilde{\mu})$. As a consequence, the Thoma simplex (i.e. the minimal boundary of the Young graph) must be contained in the boundary of $\Theta$. The following theorem shows that there are no other ergodic central measures on $\Theta$.  

\begin{theorem} \label{thm.boundarycoupledYoung}
Let $\bP$ be an ergodic central measure on the space $(\Omega_{\Theta},\cF_{\Theta})$ of infinite paths on $\Theta$. Then, for every $(\lambda,\mu) \in \Theta_n$ with $\lambda \neq \emptyset$, we have 
\begin{align*}
\bP \left( \omega \in \Omega_{\Theta} \ ; \ \omega_n = (\lambda,\mu)  \right) = 0.
\end{align*}
Hence, every ergodic central measure is supported on the subgraph $\Y$, so that the minimal boundary of $\Theta$ coincides with the Thoma simplex.
\end{theorem}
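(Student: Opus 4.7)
My plan is to reduce the theorem to the trace classification for the infinite Brauer algebra (Theorem~\ref{thm.Brauerpartition}) via a pushforward argument that exploits the product decomposition of paths in Proposition~\ref{prop.pathdecompcoupledYoung}. Define a projection $\Pi : \Omega_\Theta \to \Omega_{\cP(\Y)}$ by forgetting the first component, so that a path $((\lambda_k,\mu_k))_k$ in $\Theta$ is sent to the walk $(\mu_k)_k$ on the pascalized Young graph $\cP(\Y)$. Each step of $\Theta$, either $(\lambda,\mu) \nearrow (\lambda,\mu+\Box)$ or $(\lambda,\mu) \nearrow (\lambda+\Box,\mu-\Box)$, projects to a legitimate up- or down-step in $\cP(\Y)$, and since $|\mu_k| = k - 2|\lambda_k|$ has the same parity as $k$, the projected walk respects the grading of $\cP(\Y)$. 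The pushforward $\tilde\bP := \Pi_*\bP$ is thus a probability measure on $\Omega_{\cP(\Y)}$.

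The first key step is to check that $\tilde\bP$ is central on $\cP(\Y)$. Fix a walk $W$ on $\cP(\Y)$ of length $n$ from $\emptyset$ to $\mu_n$ and write $d := (n-|\mu_n|)/2$ for its number of down-steps. By Proposition~\ref{prop.pathdecompcoupledYoung}, paths in $\Theta$ projecting to $W$ correspond bijectively to pairs consisting of a partition $\lambda$ with $|\lambda|=d$ and a standard Young tableau of shape $\lambda$; crucially, the number of lifts depends on $W$ only through $\mu_n$. Combining this with the factorization $\dim_\Theta(\lambda,\mu_n) = \dim_\Y(\lambda)\cdot\dim_{\cP(\Y)}(\mu_n)$ also extracted from Proposition~\ref{prop.pathdecompcoupledYoung} and the centrality of $\bP$, the contributions telescope into
\[
\tilde\bP(W) \;=\; \sum_{|\lambda|=d} \dim_\Y(\lambda)\cdot\frac{\bP(\omega_n = (\lambda,\mu_n))}{\dim_\Y(\lambda)\,\dim_{\cP(\Y)}(\mu_n)} \;=\; \frac{\tilde\bP(\omega_n = \mu_n)}{\dim_{\cP(\Y)}(\mu_n)},
\]
which is exactly the centrality condition for $\tilde\bP$ on $\cP(\Y)$.

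With centrality in hand, I would invoke Theorem~\ref{thm.Brauerpartition}: any trace on $\Br_\delta(\infty)$, and hence any central measure on its branching graph $\cP(\Y)$, is the lift of a trace on $\C[S_\infty]$. By Lemma~\ref{lem.symquotient} a lifted trace vanishes on the ideal of non-invertible diagrams, which in branching-graph language says that $\tilde\bP$ is supported on walks satisfying $|\mu_k|=k$ for all $k$, i.e.\ monotone-up walks on $\Y$ that never take a down-step. Pulling this back through $\Pi$ forces $|\lambda_k| = (k-|\mu_k|)/2 = 0$ for every $k$ on $\bP$-almost every path, and hence $\lambda_k = \emptyset$ throughout. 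This yields $\bP(\omega_n = (\lambda,\mu)) = 0$ whenever $\lambda \neq \emptyset$, and $\bP$ is supported on the subgraph $\{(\emptyset,\mu) : \mu \in \Y\} \subset \Theta$, which is canonically isomorphic to $\Y$. The minimal boundary of $\Theta$ therefore coincides with that of $\Y$, which is the Thoma simplex by Theorem~\ref{thm.Thoma}.

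The main obstacle I anticipate is the centrality of the pushforward $\tilde\bP$: it rests entirely on the clean bijection of Proposition~\ref{prop.pathdecompcoupledYoung} and on the dimension factorization it produces. Absent this structural decomposition, one would be forced into a direct Vershik--Kerov estimate of $\dim_\Theta((\lambda,\mu);(\lambda_n,\mu_n))/\dim_\Theta(\lambda_n,\mu_n)$, which is significantly more delicate: on typical trajectories with $|\lambda_n|\to\infty$ the factor $\dim_\Y(\lambda;\lambda_n)/\dim_\Y(\lambda_n)$ is generically bounded below by a positive constant (as happens already under Plancherel-like measures on $\Y$), so the desired vanishing cannot be read off pointwise and must instead come from a global reorganization of path counts. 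The pushforward detour bypasses this difficulty entirely by packaging all the information about the $\mu$-walk into an object for which the answer is already known.
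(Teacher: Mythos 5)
Your argument is correct, and it reaches the conclusion by a route that is organized quite differently from the paper's, even though both proofs hinge on the same structural input, namely the path factorization and dimension product formula of Proposition~\ref{prop.pathdecompcoupledYoung}. The paper argues pointwise via the ergodic method (Theorem~\ref{thm.ergodicmethod}): it writes the relative dimension ratio along a $\bP$-typical path as the product $\frac{\dim_{\Y}(\lambda;\lambda_n)}{\dim_{\Y}(\lambda_n)}\cdot\frac{\dim_{\cP(\Y)}(\mu;\mu_n)}{\dim_{\cP(\Y)}(\mu_n)}$, discards the first factor by bounding it by $1$, and lets the second factor go to zero by the Vershik--Nikitin asymptotics for $\cP(\Y)$. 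You instead push the measure forward along the projection onto the $\mu$-coordinate, verify centrality of the pushforward on $\cP(\Y)$ by the telescoping computation (which is valid: the lifts of a walk $W$ with $d$ down-steps are exactly the paths $p_1\oplus W$ with $p_1\in\mathrm{Pa}_{\Y}(\emptyset,\lambda)$, $|\lambda|=d$, and centrality of $\bP$ plus $\dim_{\Theta}(\lambda,\mu_n)=\dim_{\Y}(\lambda)\dim_{\cP(\Y)}(\mu_n)$ gives the claimed identity), and then invoke the Brauer-algebra classification (Theorem~\ref{thm.Brauerpartition}) as a black box; since the $\lambda$-coordinate never decreases along a path of $\Theta$, the pullback conclusion $\lambda_k=\emptyset$ a.s. is immediate. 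Your version has the small advantage of avoiding the ergodic method entirely and applying to arbitrary (not necessarily ergodic) central measures in one stroke; the paper's version is more self-contained at the level of dimension estimates. One remark: the obstacle you anticipate for the ``direct'' route is not actually an obstacle --- one does not need the $\Y$-factor $\dim_{\Y}(\lambda;\lambda_n)/\dim_{\Y}(\lambda_n)$ to vanish, only to be bounded by $1$, since the $\cP(\Y)$-factor alone already tends to $0$ whenever $\lambda\neq\emptyset$; this is precisely how the paper proceeds.
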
 

\begin{proof}
Let $(\lambda,\mu) \in \Theta_n$ with $\lambda \neq \emptyset$ as in the statement of the Theorem. By the ergodic method of Vershik and Kerov, the space of paths $(\emptyset,\emptyset) \nearrow (\lambda_1,\mu_1) \nearrow (\lambda_2,\mu_2)\nearrow \dots$ for which the limit
\begin{align*}
\lim_{n \to \infty} \frac{\dim_{\Theta}((\lambda,\mu); (\lambda_n,\mu_n))}{\dim_{\Theta}( (\lambda_n,\mu_n))}
\end{align*}
exists, has full $\bP$-measure.  We now decompose every finite subpath $(\emptyset,\emptyset) \nearrow (\lambda_1,\mu_1) \nearrow (\lambda_2,\mu_2)\nearrow (\lambda_n,\mu_n)$ of any such infinite path as explained before Proposition \ref{prop.pathdecompcoupledYoung}, so that we can rewrite the dimension formula as
\begin{align*}
\frac{\dim_{\Theta}((\lambda,\mu), (\lambda_n,\mu_n))}{\dim_{\Theta}( (\lambda_n,\mu_n))} = \frac{\dim_{\Y}(\lambda;\lambda_n)}{\dim_{\Y}(\lambda_n)} \frac{\dim_{\cP(\Y)}(\mu;\mu_n)}{\dim_{\cP(\Y)}(\mu_n)} \leq \frac{\dim_{\cP(\Y)}(\mu;\mu_n)}{\dim_{\cP(\Y)}(\mu_n)}.
\end{align*}
By the results of Vershik and Nikitin on the boundary of the pascalized Young graph, see \cite[Section 2]{VN06}, we have
\begin{align*}
\lim_{n \to \infty}\frac{\dim_{\cP(\Y)}(\mu;\mu_n)}{\dim_{\cP(\Y)}(\mu_n)} = 0,
\end{align*}
whenever $|\mu|<n$, which in our situation is equivalent to $\lambda \neq \emptyset$. Therefore any ergodic central measure must be fully supported on the copy of $\Y$ in $\Theta$ and the theorem is proven.
\end{proof}


\subsection{The boundary of $\cP(\Theta)$} \label{subsec.boundpasccoupledYoung}

In this section, we discuss central measures on the pascalized branching graph $\cP(\Theta)$. We conjecture that similar to the cases seen so far, these are fully supported on its subgraph $\Theta \subset \cP(\Theta)$. Since we have already seen in Theorem \ref{thm.boundarycoupledYoung} that every central measure on $\Theta$ is fully supported on  the Young graph $\Y$, this conjecture would even imply that the boundary of $\cP(\Theta)$ in fact reduces to the boundary of the Young graph $\Y$. We will proceed to show that our conjecture would be implied by a another conceptually very simple numerical conjecture. We have verified the latter for a significant number of cases using the algebra software SageMath but unfortunately it has resisted our proof attempts so far.

\begin{conjecture} \label{thm.boundaryreductionH}
Any central measure on  the branching graph $\cP(\Theta)$ (associated to the inductive limit algebra $A_{(\cC_H,\delta)}(\infty)$ at the generic parameter) is fully supported on $\Theta$ (and thus by Theorem \ref{thm.boundarycoupledYoung} on the Young graph $\Y \subset \Theta$).
\end{conjecture}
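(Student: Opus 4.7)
The plan is to mirror the argument used in Section~4 for the rook-Brauer algebra, now adapted to the richer three-parameter setting of $\cP(\Theta)$. The first step is to prove an analogue of Lemma~\ref{lem.zeroprobnew}: if the quantity
\[ R(n) := \max_{(\lambda,\mu) \,:\, 2|\lambda|+|\mu|<n} \frac{\dim_{\cP(\Theta)}(n-1,(\lambda,\mu))}{\dim_{\cP(\Theta)}(n,(\lambda,\mu))} \]
tends to $0$ as $n\to\infty$, then any ergodic central measure $\bP$ on $(\Omega_{\cP(\Theta)},\cF_{\cP(\Theta)})$ satisfies $\bP(\{\omega_{n_0}=(n_0,(\lambda,\mu))\})=0$ whenever $2|\lambda|+|\mu|<n_0$. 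The argument is the familiar one: by the Vershik-Kerov ergodic method (Theorem~\ref{thm.ergodicmethod}), almost every $\bP$-path $(\omega_n)$ realizes this probability as a limit of relative dimensions. Since $2|\lambda|+|\mu|<n_0$, any path from $(n_0,(\lambda,\mu))$ to $\omega_n$ must make at least one ``pascal'' (level-preserving in $\Theta$) step, so the number of such paths is bounded by $\dim_{\cP(\Theta)}(n-1,\omega_n)$, and the ratio is squeezed by $R(n)$. This gives the desired reduction to a purely numerical question.

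The second step is to establish the dimension factorization
\[ \dim_{\cP(\Theta)}(n,(\lambda,\mu)) = K(n,|\lambda|,|\mu|)\,\dim_{\Y}(\lambda)\,\dim_{\Y}(\mu), \]
and extract the recursion defining the array $K(n,k,l)$. I would proceed by induction on $n$, summing over predecessors in $\cP(\Theta)_{n-1}$: a vertex $(n,(\lambda,\mu))$ is connected to copies of itself (pascal rest), to $(\lambda,\nu)$ with $\nu=\mu\pm\Box$, and to $(\eta,\nu)$ with $\eta=\lambda\mp\Box$, $\nu=\mu\pm\Box$. Applying the standard Young-graph identities $\sum_{\nu=\mu+\Box}\dim_{\Y}\nu=(|\mu|+1)\dim_{\Y}\mu$ and $\sum_{\nu=\mu-\Box}\dim_{\Y}\nu=\dim_{\Y}\mu$, together with their $\lambda$-analogues, factors $\dim_{\Y}(\lambda)\dim_{\Y}(\mu)$ out cleanly and yields an explicit linear recursion in $n$ whose coefficients depend polynomially on $k,l$. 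The initial conditions $K(0,0,0)=1$, $K(n,k,l)=0$ for $2k+l>n$, and a parity condition from the pascalization pin down $K$ uniquely.

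With these two steps in place, Conjecture~\ref{thm.boundaryreductionH} reduces to the statement $R(n)\to 0$, equivalently
\[ \max_{2k+l<n}\ \frac{K(n-1,k,l)}{K(n,k,l)} \;\longrightarrow\; 0 \qquad (n\to\infty), \]
which is the ``conceptually simpler numerical statement'' referred to in the introduction. The strategy for tackling this would parallel Lemma~\ref{lem.bistochasticeasyrec} and Corollary~\ref{cor.bistochasticlimit}: first derive diagonal identities that let one express $K(n+2k+l,k,l)$ in terms of $K(n',0,0)$ (or perhaps $K(n',0,l')$) up to an explicit binomial/multinomial factor, then use these to show that the maximum over $(k,l)$ is attained on a controlled boundary stratum, and finally prove monotonicity of the corresponding one-parameter subsequence. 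The hypothetical positive limit $a>0$ could then be contradicted by comparing $K(n,0,0)$ with the pascalized Young-graph dimensions, for which the Vershik-Nikitin estimate of \cite{VN06} furnishes exponential lower bounds with arbitrary base.

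The main obstacle, and the reason the statement remains a conjecture, is the third step: with three indices $(n,k,l)$ instead of two, it is no longer obvious that the maximum of the ratio is attained at the simplest vertex $(\lambda,\mu)=(\emptyset,\emptyset)$, nor that the diagonal identities take as clean a form as in the Brauer/rook-Brauer case. In particular, the ``stolen-box'' moves $(\lambda,\mu)\nearrow(\lambda+\Box,\mu-\Box)$ couple the two parameters $k$ and $l$ in the recursion, so naive monotonicity arguments in one variable break down. Controlling this coupling \emph{uniformly} over the large region $\{2k+l<n\}$ appears to require a genuinely new idea beyond the techniques developed for the simpler principal graphs, which is presumably why the author only provides empirical verification up to $n=20$.
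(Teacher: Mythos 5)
The statement you are asked to prove is a \emph{conjecture}: the paper does not prove it either, but only reduces it to a purely numerical statement (Conjecture \ref{lem.maximumofKs}) via the factorization $\dim_{\cP(\Theta)}(n,(\lambda,\mu))=K(n,|\lambda|,|\mu|)\dim_{\Y}(\lambda)\dim_{\Y}(\mu)$ of Lemma \ref{lem.recursionpaschyperoctahedral}, together with Lemma \ref{lem.zerolimithyper} showing that the corner sequence $K(n-2,0,\delta(n))/K(n,0,\delta(n))$ tends to $0$. Your proposal follows essentially this same reduction, and you correctly isolate the genuinely open step --- that the maximum of the ratio over $\{2k+l<n\}$ is attained at the corner --- which is exactly the content of the paper's Conjecture \ref{lem.maximumofKs}. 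At the level of strategy your plan matches the paper's.

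Two technical slips in your setup would, however, derail the program if carried out literally. First, because $\cP(\Theta)$ is a genuine pascalization, a vertex $(\lambda,\mu)$ occurs only at levels $n$ with $n\equiv 2|\lambda|+|\mu| \bmod 2$; your quantity $R(n)$ compares levels $n-1$ and $n$, for which one of the two dimensions is always undefined. The correct ratio, as in \eqref{eq.maxPascalhyp}, compares levels $n-2$ and $n$. Second, there is no ``pascal rest'' move in $\cP(\Theta)$: the predecessors/successors of $(n,(\lambda,\mu))$ are exactly the vertices $(\lambda,\mu\pm\Box)$ and $(\lambda\pm\Box,\mu\mp\Box)$, with no self-loop, so the recursion for $K(n,k,l)$ has four terms and no $K(n-1,k,l)$ term. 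The rest move you import from Section \ref{sec.boundaryrookBrauer} is a feature of $\Gamma_B$ (whose principal graph $\Lambda$ duplicates each level), not of $\Gamma_H=\cP(\Theta)$; including it would produce the wrong array and destroy the parity structure. Finally, for the limit along the corner stratum, the paper's Lemma \ref{lem.zerolimithyper} obtains the direct bound $K(2n-2,0,0)/K(2n,0,0)\leq \tfrac{1}{2n-1}$ from Orellana's recursion for $\dim\End_{H_m}(V^{\ot n})$, which is cleaner than the contradiction-with-exponential-growth argument you sketch (and avoids having to compare $K(n,0,0)$ with $\dim_{\cP(\Y)}(n,\emptyset)$, a comparison that is not immediate because down-steps of $\mu$ in $\Theta$ are coupled to up-steps of $\lambda$).
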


Conjecture \ref{thm.boundaryreductionH} would follow from
\begin{align} \label{eq.maxPascalhyp}
\lim_{n \to \infty} \max_{2|\lambda|+|\mu|<n \atop 2|\lambda|+|\mu|= n \mod 2 } \frac{\dim_{\cP(\theta)}(n-2,(\lambda,\mu))}{\dim_{\cP(\theta)}(n,(\lambda,\mu))} = 0,
\end{align} 
by \cite[Lemma 2.2]{VN06}. We will follow a strategy similar to the one for the rook-Brauer algebras in Lemma \ref{lem.recursionbistochastic} to simplify this statement. However, due to the two partitions involved in the construction of $\Theta$, we will find a recursive relation that requires an additional parameter and will thus be more difficult to analyse.  

\begin{lemma} \label{lem.recursionpaschyperoctahedral}
There exist numbers $K(n,k,l), \ n,k,l \geq 0, \ 2k+l \leq n, \ 2k+l = n \mod 2$ such that 
\begin{align*}
\dim_{\cP(\theta)}(n,(\lambda,\mu))= K(n,|\lambda|,|\mu|) \dim_{\Y}(\lambda) \dim_{\Y}(\mu)
\end{align*}
for all $(\lambda,\mu)$ with $2|\lambda|+|\mu| \leq n$ and $2|\lambda|+|\mu| = n \mod 2$. These numbers are uniquely determined by 
\begin{itemize}
\item $K(0,0,0)=1$;
\item $K(n,k,l) = \frac{1}{2^k} \frac{(2k+l)!}{k!l!}$ for all $k,l$ with $2k+l=n$;
\end{itemize}
and the recursion
\begin{align*}
K(n,k,l) \ = \ &K(n-1,k,l-1)+ (l+1)\left(K(n-1,k,l+1)+K(n-1,k-1,l+1) \right) \\
&+ (k+1)K(n-1,k+1,l-1)
\end{align*}
 for $n \geq 1$ and all $k,l$ such that $2k+l < n$ and $2k+l = n \mod 2$. 
\end{lemma}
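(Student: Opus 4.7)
The plan is to prove this by induction on the level $n$, establishing simultaneously that the dimension factorizes as claimed and that the coefficients $K(n,k,l)$ satisfy the stated initial values and recursion. The base case $n=0$ is immediate: the only admissible triple is $(0,0,0)$, and $\dim_{\cP(\Theta)}(0,(\emptyset,\emptyset)) = 1 = \dim_{\Y}(\emptyset)\dim_{\Y}(\emptyset)$, forcing $K(0,0,0)=1$.

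For the inductive step, fix $(\lambda,\mu)$ with $k=|\lambda|$, $l=|\mu|$, $2k+l \leq n$ and $2k+l \equiv n \pmod 2$, and split into two cases. In the boundary case $2k+l = n$, the parity/size constraints leave no room for any backward move along an edge of $\Theta$: every path from the root to $(n,(\lambda,\mu))$ in $\cP(\Theta)$ must consist entirely of $n$ forward edges in $\Theta$, and hence corresponds bijectively to an $n$-step path in $\Theta$ from $(\emptyset,\emptyset)$ to $(\lambda,\mu)$. Therefore $\dim_{\cP(\Theta)}(n,(\lambda,\mu)) = \dim_{\Theta}(\lambda,\mu)$, and Proposition \ref{prop.dimCYgraph} hands us precisely the stated boundary value $K(n,k,l) = \frac{1}{2^k}\frac{(2k+l)!}{k!l!}$.

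In the bulk case $2k+l < n$, Definition \ref{def.pascalization} gives
\begin{align*}
\dim_{\cP(\Theta)}(n,(\lambda,\mu)) \;=\; \sum_{(\lambda',\mu')} \dim_{\cP(\Theta)}(n-1,(\lambda',\mu')),
\end{align*}
where the sum runs over all vertices of $\Theta$ adjacent to $(\lambda,\mu)$. By Definition \ref{def.CatalanYoung} these are exactly the (valid) vertices of the form $(\lambda,\mu-\Box)$, $(\lambda,\mu+\Box)$, $(\lambda+\Box,\mu-\Box)$, and $(\lambda-\Box,\mu+\Box)$ (with the convention that contributions from undefined diagrams are $0$, equivalently $K(n-1,-1,\cdot)=0=K(n-1,\cdot,-1)$). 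Applying the inductive hypothesis to each summand and invoking the standard Young-graph branching identities
\begin{align*}
\sum_{\nu = \mu + \Box}\dim_{\Y}(\nu) = (|\mu|+1)\dim_{\Y}(\mu), \qquad \sum_{\nu = \mu - \Box}\dim_{\Y}(\nu) = \dim_{\Y}(\mu),
\end{align*}
together with their analogues for $\lambda$, the common factor $\dim_{\Y}(\lambda)\dim_{\Y}(\mu)$ pulls out of every term. What remains on the right-hand side is exactly the claimed recursion for $K(n,k,l)$; this simultaneously confirms that the quotient $\dim_{\cP(\Theta)}(n,(\lambda,\mu))/(\dim_{\Y}(\lambda)\dim_{\Y}(\mu))$ depends only on $(n,k,l)$ and verifies the recursion.

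Uniqueness of the array is then automatic, since the boundary values along $2k+l=n$ together with the recursion determine $K(n,k,l)$ throughout the admissible range. No step in the argument is particularly hard: the one place requiring a small check is matching the closed form from Proposition \ref{prop.dimCYgraph} with the stated initial condition in Case 1, after which the bulk case is a direct bookkeeping application of the Young-graph branching rule.
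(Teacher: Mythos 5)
Your proposal is correct and follows essentially the same route as the paper: the boundary values $2k+l=n$ are read off from Proposition \ref{prop.dimCYgraph} (since all paths there are forward paths in $\Theta$), and the recursion is obtained by summing $\dim_{\cP(\Theta)}(n-1,\cdot)$ over the four types of $\Theta$-neighbours of $(\lambda,\mu)$ and applying the Young-graph branching identities together with the inductive hypothesis. The coefficients you obtain match the paper's computation exactly.
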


\begin{proof}
Since $\cP(\Theta)$ is the pascalization of $\Theta$ the first two bullet points of the lemma follow from the dimension formulas for $\Theta$ of Proposition \ref{prop.dimCYgraph}.
The proof of the recursion is completely analogous to the proofs of Lemma \ref{lem.bistochasticeasyrec} and Proposition \ref{prop.dimCYgraph} with the dimension decomposition formula now being 
\begin{align*}
\dim_{\cP(\theta)}(n,(\lambda,\mu)) &= \sum_{(\xi,\eta)\nearrow_{\Theta} (\lambda,\mu)} \dim_{\cP(\theta)}(n-1,(\xi,\eta)) \quad + \sum_{(\lambda,\mu)\nearrow_{\Theta} (\xi,\eta)} \dim_{\cP(\theta)}(n-1,(\xi,\eta)) \\
&= \sum_{(\xi,\eta)=(\lambda,\mu - \Box)} \dim_{\cP(\theta)}(n-1,(\xi,\eta)) \quad + \sum_{(\xi,\eta)=(\lambda,\mu + \Box)} \dim_{\cP(\theta)}(n-1,(\xi,\eta)) \\
&+ \sum_{(\xi,\eta)=(\lambda + \Box,\mu - \Box)} \dim_{\cP(\theta)}(n-1,(\xi,\eta)) \quad + \sum_{(\xi,\eta)=(\lambda - \Box,\mu + \Box)} \dim_{\cP(\theta)}(n-1,(\xi,\eta)) .
\end{align*}
Using the identities
\begin{align*}
\sum_{\xi = \lambda - \Box} \dim_{\Y}(\xi) = \dim_{\Y}(\lambda), \qquad \sum_{\xi = \lambda + \Box} \dim_{\Y}(\xi) = (|\lambda| +1) \dim_{\Y}(\lambda),
\end{align*}
we get by induction and a straightforward computation that
\begin{align*}
\dim_{\cP(\theta)}(n,(\lambda,\mu)) \qquad \qquad\qquad\qquad\qquad \qquad \qquad \qquad\qquad\qquad\qquad \qquad\qquad \qquad\qquad\qquad\qquad \\
= ( K(n-1,|\lambda|,|\mu|-1)+ (|\mu|+1)(K(n-1,|\lambda|,|\mu|+1)+K(n-1,|\lambda|-1,|\mu|+1) ) \\
+ (|\lambda|+1)K(n-1,|\lambda|+1,|\mu|-1) ) \dim_{\Y}(\lambda) \dim_{\Y}(\mu), 
\end{align*}
which implies the recursion formula stated in the lemma.
\end{proof}

We will now show that Conjecture \ref{thm.boundaryreductionH} would be implied by the following numerical conjecture. 

\begin{conjecture} \label{lem.maximumofKs}
We have 
\begin{align*}
\frac{K(n-2,k,l)}{K(n,k,l)} \geq \max \left( \frac{K(n-2,k+1,l)}{K(n,k+1,l)}, \frac{K(n-2,k,l+2)}{K(n,k,l+2)} \right) 
\end{align*}
for all $n\geq 3$ and $k,l$ such that $2k+l < n-2$ and $2k+l = n \mod 2$. In particular
\begin{align*}
\max_{2k+l<n \atop 2k+l= n \mod 2 } \frac{K(n-2,k,l)}{K(n,k,l)} = \frac{K(n-2,0,\delta(n))}{K(n,0,\delta(n))},
\end{align*}
where $\delta(n)=0$ if $n$ is even and $\delta(n)=1$ if $n$ is odd.
\end{conjecture}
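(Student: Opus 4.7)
My proposal is to attack Conjecture \ref{lem.maximumofKs} via an exponential generating function identity for $K(n,k,l)$. Multiplying the recursion in Lemma \ref{lem.recursionpaschyperoctahedral} by $x^k y^l$ and summing, the polynomial $F_n(x,y) := \sum_{k,l} K(n,k,l) x^k y^l$ satisfies $F_n = L F_{n-1}$, $F_0 = 1$, with the first-order differential operator $L = y + y\partial_x + (1+x)\partial_y$. One checks directly that $G(t,x,y) := \exp\bigl[(1+x)(\cosh t - 1) + y \sinh t\bigr]$ satisfies $\partial_t G = LG$ with $G(0,\cdot,\cdot) = 1$, so that $\sum_{n \geq 0} t^n F_n(x,y)/n! = G(t,x,y)$ and extracting $[x^k y^l]$ yields the closed form
\begin{equation*}
K(n,k,l) \;=\; \frac{n!}{k!\, l!}\, [t^n]\, e^{\cosh t - 1} (\cosh t - 1)^k (\sinh t)^l.
\end{equation*}

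Setting $\psi^{(k,l)}(t) := e^{\cosh t - 1} (\cosh t - 1)^k \sinh^l t$ and using the algebraic relation $\psi^{(k+1,l)} = (\cosh t - 1)\,\psi^{(k,l)}$, a direct bilinear expansion rewrites
\begin{equation*}
K(n-2,k,l) K(n, k+1, l) - K(n-2,k+1,l) K(n,k,l) \;=\; C_{n,k,l} \sum_{j \geq 2,\, j\ \mathrm{even}} \frac{1}{j!} \bigl[\psi^{(k,l)}_{n-2} \psi^{(k,l)}_{n-j} - \psi^{(k,l)}_{n-j-2} \psi^{(k,l)}_{n}\bigr]
\end{equation*}
with a positive prefactor $C_{n,k,l}$; the analogous expansion based on $\psi^{(k, l+2)} = \sinh^2 t \cdot \psi^{(k,l)}$ handles the $l$-direction, with $\sinh^2 t$ likewise having non-negative Taylor coefficients $2^{j-1}/j!$. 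Since all weights are non-negative, Conjecture \ref{lem.maximumofKs} follows provided that, for each fixed $k,l$, the Taylor coefficients $\psi^{(k,l)}_j := [t^j]\psi^{(k,l)}(t)$ are log-concave within their parity class, i.e.\ $(\psi^{(k,l)}_j)^2 \geq \psi^{(k,l)}_{j-2}\,\psi^{(k,l)}_{j+2}$ for $j \geq 2k+l+2$ (from which the full chain $\psi^{(k,l)}_{n-2}\psi^{(k,l)}_{n-j} \geq \psi^{(k,l)}_{n-j-2}\psi^{(k,l)}_n$ follows by monotonicity of ratios).

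To establish the log-concavity, I would exploit the Weierstrass factorizations $\cosh t - 1 = (t^2/2)\prod_{n \geq 1}(1+t^2/(2n\pi)^2)^2$ and $\sinh t = t\prod_{n\geq 1}(1+t^2/(n\pi)^2)$. Viewed as a function of $u = t^2$, the algebraic factor $((\cosh t - 1)/t^2)^k (\sinh t/t)^l$ lies in the Laguerre-Polya class (all zeros real and non-positive); its Taylor coefficients in $u$ are therefore positive and log-concave by Newton's inequalities, and closure of non-negative log-concave sequences under convolution makes this part of the argument tractable. The main obstacle is the multiplier $e^{\cosh t - 1}$, which is entire of infinite order and not in the Laguerre-Polya class: the raw Bell-like sequence $n![t^n]e^{\cosh t - 1}$ (the number of set partitions of $[n]$ into blocks of even size) is not log-concave, but the normalised coefficients $[t^n]e^{\cosh t - 1}$ numerically remain log-concave and the property survives multiplication by the Laguerre-Polya factor. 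Two routes seem promising: either finding a positive integral representation of $e^{\cosh t - 1}$ as a superposition of Laguerre-Polya functions to which Newton's inequalities could be applied pointwise, or developing a Karlin-McGregor / Lindstr\"om-Gessel-Viennot type injective argument on the weighted walks on $\cP(\Theta)$ counted by $K(n,k,l)$, exploiting the two-Young-diagram structure of $\Theta$ to realise the minors as non-negative counts of non-intersecting path pairs.
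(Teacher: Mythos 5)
The statement you are proving is stated in the paper as a \emph{conjecture}: the author explicitly says it has resisted proof and has only been verified by computer for $n\le 20$, so there is no proof in the paper to compare against. Your reduction, however, is correct as far as it goes, and it is genuinely more than what the paper has. The generating-function computation checks out: the recursion of Lemma \ref{lem.recursionpaschyperoctahedral} does correspond to the operator $L=y+y\partial_x+(1+x)\partial_y$ (with the convention $K(n,k,l)=0$ for $2k+l>n$, under which the recursion and the boundary values $\tfrac{1}{2^k}\tfrac{(2k+l)!}{k!l!}$ are consistent), the function $G(t,x,y)=\exp[(1+x)(\cosh t-1)+y\sinh t]$ solves $\partial_t G=LG$, $G(0,\cdot,\cdot)=1$, and the resulting closed form $K(n,k,l)=\tfrac{n!}{k!l!}[t^n]e^{\cosh t-1}(\cosh t-1)^k(\sinh t)^l$ agrees with small cases (e.g.\ $K(2n,0,0)$ counts set partitions of $[2n]$ into even blocks, matching $\dim\End_{H_m}(V^{\otimes n})$). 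This is exactly the ``more explicit formula for the numbers $K(n,k,l)$'' that the paper says is needed. The bilinear expansion with the non-negative weights $1/j!$ (resp.\ $2^{j-1}/j!$) and the positive prefactor $C_{n,k,l}=\tfrac{(n-2)!\,n!}{k!\,(k+1)!\,(l!)^2}$ is also correct, and it does reduce both inequalities of the conjecture to log-concavity of $(\psi^{(k,l)}_j)_j$ within each parity class.

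The genuine gap is that this log-concavity is never proved; your last paragraph openly concedes this. The heart of the difficulty is precisely the factor $e^{\cosh t-1}$: as you note, it is not in the Laguerre--P\'olya class, Newton's inequalities do not apply to it, and the unnormalised coefficients $n![t^n]e^{\cosh t-1}$ ($1,1,4,31,\dots$ on the even subsequence) are in fact \emph{not} log-concave, so the claimed log-concavity of the normalised coefficients is a delicate statement that cannot follow from soft closure properties (log-concavity of a parity subsequence is not in general preserved under multiplication by a series with non-negative coefficients, so even the assertion that the property ``survives multiplication by the Laguerre--P\'olya factor'' needs an argument). The two routes you sketch --- a positive superposition of Laguerre--P\'olya functions, or a Lindstr\"om--Gessel--Viennot injection on pairs of walks --- are plausible but undeveloped, and neither is carried out even in a special case. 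So the proposal is a correct and valuable \emph{reduction} of Conjecture \ref{lem.maximumofKs} to a single-variable log-concavity statement about the coefficients of $e^{\cosh t-1}(\cosh t-1)^k\sinh^l t$, but it is not a proof: the conjecture remains open, now in an arguably more tractable form.
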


Since it is easy to implement the recursion for the numbers $K(n,k,l)$, one can check the validity of Conjecture \ref{lem.maximumofKs} for small values of $n$ and the allowed values of $k,l$ in a computer algebra program of the reader's choice. We have done so for all $n \leq 20$ in SageMath. The reason that Conjecture \ref{lem.maximumofKs} has resisted our proof attempts so far is the following. One would for instance like to compare the expressions 
\[ K(n-2,k,l) K(n,k+1,l) \geq K(n-2,k+1,l) K(n,k,l) \]
by replacing the numbers in the inequality using the recursion formula of Lemma \ref{lem.recursionpaschyperoctahedral} and an inductive argument. However a term by term comparison of the $16$ terms on both sides of the inequality turns out to be insufficient. A similar problem appears if one would like to prove Corollary \ref{cor.bistochasticlimit} for the numbers $M(n,l)$ this way, but it that case we had found an easy formula for $M(n,l)$ in Lemma \ref{lem.recursionbistochastic} to facilitate the argument. We therefore believe that a more explicit formula for the numbers $K(n,k,l)$ is necessary to prove Conjecture \ref{lem.maximumofKs}.

The following lemma shows that Conjecture \ref{lem.maximumofKs} would in fact imply Conjecture \ref{thm.boundaryreductionH}.

\begin{lemma} \label{lem.zerolimithyper}
We have
\begin{align*}
\lim_{n \to \infty} \frac{K(n-2,0,\delta(n))}{K(n,0,\delta(n))} = 0.
\end{align*}
\end{lemma}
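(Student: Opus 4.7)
The strategy follows the template of Corollary \ref{cor.bistochasticlimit}. Step 1 is a reduction: specialize the recursion in Lemma \ref{lem.recursionpaschyperoctahedral} at $(k,l)=(0,0)$. Since the only vertex of $\Theta$ adjacent to $(\emptyset,\emptyset)$ is $(\emptyset,\Box)$, the three other terms on the right-hand side all involve $K$-values at negative indices and therefore vanish, giving
\[ K(n,0,0) = K(n-1,0,1), \quad n \geq 1. \]
In particular $K(n-2,0,0)/K(n,0,0) = K(n-3,0,1)/K(n-1,0,1)$, so both parity cases of the statement of the lemma reduce to showing
\[ \lim_{m \to \infty} \frac{K(2m-1,0,1)}{K(2m+1,0,1)} = 0. \]

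Step 2 is a super-exponential lower bound. Walks on $\Theta$ that use only moves of type A (adding a box to $\mu$) and C (removing a box from $\mu$) never affect the $\lambda$-coordinate, and therefore project isomorphically to walks on the pascalized Young graph $\cP(\Y)$ (the branching graph of the Brauer algebra). Hence
\[ K(n,0,l) \;\geq\; \tilde{M}(n,l) \;:=\; \binom{n}{l}(n-l-1)!!, \]
where $\tilde{M}(n,l)\dim_\Y(\mu)$ is the Vershik--Nikitin dimension at $\mu\in\Y_l$ in the $n$-th level of $\cP(\Y)$ (see \cite{VN06}). In particular $K(2m+1,0,1) \geq (2m+1)(2m-1)!!$, which grows faster than any exponential in $m$.

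Step 3 is the conclusion via monotonicity. Set $a_m := K(2m-1,0,1)/K(2m+1,0,1)$. The recursion and the nonnegativity of the $K$-values guarantee $a_m \leq 1$. The key remaining step is to show that the sequence $(a_m)_m$ is weakly decreasing. Granted that, the limit $a := \lim_m a_m$ exists, and if $a>0$ then iterating $K(2m+1,0,1) \leq a^{-1} K(2m-1,0,1)$ yields $K(2m+1,0,1) \leq C \cdot a^{-m}$ for some constant $C$, contradicting the super-exponential lower bound from Step 2. Hence $a=0$, which is exactly the desired limit.

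The main obstacle is the monotonicity claim in Step 3. In the rook-Brauer case, the analogous statement (Lemma \ref{lem.bistochasticeasyrec}(3)) was obtained by a clean induction on the one-parameter recursion $a_n = 1/(1 + (n-1)a_{n-1})$. Here, however, the recursion from Lemma \ref{lem.recursionpaschyperoctahedral} couples $K(n,0,l)$ to $K(n-1,1,l-1)$ as well, and the analogous scalar reduction is not available. A natural approach is a simultaneous induction on a family of inequalities jointly governing all ratios $K(n-2,k,l)/K(n,k,l)$ in the spirit of Conjecture \ref{lem.maximumofKs} (and supported by the computer verification for $n\leq 20$). An alternative that sidesteps monotonicity is to prove via an explicit combinatorial injection of walks on $\Theta$ that $K(2m,0,2)/K(2m,0,0)$ grows at least linearly in $m$, since combining this with $K(2m+1,0,1) = K(2m,0,0) + 2K(2m,0,2) + K(2m,1,0)$ (using also $K(2m,0,0) = K(2m-1,0,1)$) yields the limit directly.
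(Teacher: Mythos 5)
Your Steps 1 and 2 are sound: the reduction $K(n,0,0)=K(n-1,0,1)$ is exactly how the paper begins, and the comparison $K(n,0,l)\geq\binom{n}{l}(n-l-1)!!$ via walks that never touch the $\lambda$-coordinate is a correct super-exponential lower bound. The problem is that your argument then hinges entirely on the monotonicity of $a_m=K(2m-1,0,1)/K(2m+1,0,1)$, which you do not prove and explicitly flag as the main obstacle. This is not a cosmetic omission: without monotonicity (or some other control), the bound $a_m\leq 1$ together with super-exponential growth of $K(2m+1,0,1)$ only forces $\liminf_m a_m=0$, not $\lim_m a_m=0$, since $\prod_{j\leq m}a_j^{-1}$ can grow super-exponentially while $a_m$ stays bounded away from $0$ along a subsequence. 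Moreover, the monotonicity you need is precisely the kind of two-parameter comparison of the $K(n,k,l)$ that the paper isolates as Conjecture \ref{lem.maximumofKs} and reports as resistant to the term-by-term inductive attack; your second fallback (linear growth of $K(2m,0,2)/K(2m,0,0)$) is plausible but also left unproved. So the proposal as written has a genuine gap.

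The paper closes the argument by a completely different device that sidesteps growth and monotonicity altogether: after the reduction to $K(2n-2,0,0)/K(2n,0,0)$, it identifies $K(2n,0,0)$ with $a_n=\dim\End_{H_m}(V^{\otimes n})$ for $m>2n$ (Frobenius reciprocity for the hyperoctahedral group $H_m=\Z_2\wr S_m$), and then invokes Orellana's recursion \cite{Or05}
\begin{align*}
a_n=\sum_{j=1}^n\binom{2n-1}{2j-1}a_{n-j},
\end{align*}
whose $j=1$ term alone already gives $a_n\geq(2n-1)a_{n-1}$, hence $a_{n-1}/a_n\leq 1/(2n-1)\to 0$. If you want to complete your route instead, you would need to actually establish the decrease of $a_m$ or the divergence of $K(2m,0,2)/K(2m,0,0)$; absent that, the external recursion for the centralizer dimensions is the efficient way through.
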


\begin{proof}
We first note that $K(2n-1,0,1) = K(2n,0,0)$ for $n \geq 0$, so that it suffices to show that 
\begin{align*}
\lim_{n \to \infty} \frac{K(2n-2,0,0)}{K(2n,0,0)} = 0.
\end{align*}
Moreover, Frobenius reciprocity implies that $K(2n,0,0) = a_n := \dim \End_{H_m}(V^{\ot n})$ for arbitrary $m > 2n$, where $H_m = \Z_2 \wr S_m$ is the hyperoctahedral group. By \cite[Corollary 3.1]{Or05}, these dimensions satisfy the recursion
\begin{align*}
a_n = \sum_{j=1}^n \binom{2n-1}{2j-1} a_{n-j}
\end{align*}
so that 
\begin{align*}
\frac{K(2n-2,0,0)}{K(2n,0,0)} = \frac{a_{n-1}}{a_n} = \frac{1}{2n-1} \left( 1- \sum_{j=2}^n \binom{2n-1}{2j-1} \frac{a_{n-j}}{a_n} \right) \leq \frac{1}{2n-1}. 
\end{align*}
Thus, the claim is proven.
\end{proof}


The following is now immediate from the observations made in this section.

\begin{corollary}
Conjecture \ref{lem.maximumofKs} implies Conjecture \ref{thm.boundaryreductionH}.
\end{corollary}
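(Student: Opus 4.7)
The plan is to simply chain together the criterion of Vershik--Nikitin recalled just after Conjecture \ref{thm.boundaryreductionH}, the dimension identity of Lemma \ref{lem.recursionpaschyperoctahedral}, the arg-max statement in Conjecture \ref{lem.maximumofKs}, and the explicit limit in Lemma \ref{lem.zerolimithyper}. Concretely, I would begin by recalling that, as noted immediately below Conjecture \ref{thm.boundaryreductionH}, a direct application of \cite[Lemma 2.2]{VN06} (the analogue of Lemma \ref{lem.zeroprobnew} adapted to pascalized graphs, where the relevant comparison is between levels $n$ and $n-2$ due to the parity condition $2k+l\equiv n \bmod 2$) reduces Conjecture \ref{thm.boundaryreductionH} to verifying
\begin{equation*}
\lim_{n\to\infty}\ \max_{\substack{2|\lambda|+|\mu|<n \\ 2|\lambda|+|\mu|\equiv n \bmod 2}} \frac{\dim_{\cP(\Theta)}(n-2,(\lambda,\mu))}{\dim_{\cP(\Theta)}(n,(\lambda,\mu))} \ =\ 0.
\end{equation*}

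Next I would use Lemma \ref{lem.recursionpaschyperoctahedral} to rewrite each dimension as $\dim_{\cP(\Theta)}(n,(\lambda,\mu)) = K(n,|\lambda|,|\mu|)\dim_\Y(\lambda)\dim_\Y(\mu)$. The factors $\dim_\Y(\lambda)\dim_\Y(\mu)$ appear in both numerator and denominator and cancel out, so that the ratio depends only on the integer invariants $k=|\lambda|$ and $l=|\mu|$:
\begin{equation*}
\frac{\dim_{\cP(\Theta)}(n-2,(\lambda,\mu))}{\dim_{\cP(\Theta)}(n,(\lambda,\mu))} \ =\ \frac{K(n-2,k,l)}{K(n,k,l)}.
\end{equation*}
The maximum over $(\lambda,\mu)$ with $2|\lambda|+|\mu|<n$ and correct parity therefore coincides with the maximum over pairs $(k,l)$ with $2k+l<n$ and $2k+l\equiv n\bmod 2$.

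At this point, assuming Conjecture \ref{lem.maximumofKs}, the maximum on the right-hand side is achieved at $(k,l)=(0,\delta(n))$, giving
\begin{equation*}
\max_{\substack{2k+l<n \\ 2k+l\equiv n \bmod 2}}\frac{K(n-2,k,l)}{K(n,k,l)} \ =\ \frac{K(n-2,0,\delta(n))}{K(n,0,\delta(n))},
\end{equation*}
and Lemma \ref{lem.zerolimithyper} shows that this quantity tends to $0$ as $n\to\infty$. Combining these three reductions yields exactly the hypothesis of \cite[Lemma 2.2]{VN06}, whence Conjecture \ref{thm.boundaryreductionH} follows.

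Since every one of these ingredients has already been established (or is being assumed) in the preceding subsections, the proof is essentially a transcription of the implications and should present no obstacle. The only point to be a little careful about is that Lemma \ref{lem.zeroprobnew} and the $\cite[\text{Lemma 2.2}]{VN06}$ analogue are stated for one-step transitions, whereas pascalization forces us to compare consecutive levels of equal parity; I would briefly verify that the argument of Lemma \ref{lem.zeroprobnew} goes through verbatim with $n-1$ replaced by $n-2$, noting that any path through a vertex $(n_0,(\lambda,\mu))$ with $2|\lambda|+|\mu|<n_0$ must take at least one ``down-type'' step and can thus be shortened by two levels while preserving its terminal vertex two levels lower.
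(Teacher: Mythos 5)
Your proposal is correct and matches the paper's intended argument exactly: the paper's proof of this corollary is simply the observation that it follows immediately by chaining the reduction to \eqref{eq.maxPascalhyp} via \cite[Lemma 2.2]{VN06}, the factorization of Lemma \ref{lem.recursionpaschyperoctahedral} (which cancels the $\dim_{\Y}$ factors), the arg-max claim of Conjecture \ref{lem.maximumofKs}, and the limit of Lemma \ref{lem.zerolimithyper}. Your extra remark about checking the two-level ($n-2$ versus $n$) version of the Vershik--Nikitin criterion is a reasonable point of care but does not constitute a departure from the paper's route.
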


\section{The diagram algebras of the halfliberated orthogonal quantum group} \label{sec.halfliborth}

We will now proceed to discuss the branching graph of the diagram algebras $A_{(\cO^*,\delta)}(k)$ that serve as Schur-Weyl duals to the \emph{halfliberated orthogonal group} introduced in \cite{BS09} and analysed in \cite{BV09}. These diagram algebras should be considered as an intermediate step between the Temperley-Lieb algebras $\mathrm{TL}_{\delta}(k) = A_{(O^+,\delta)}(k)$ that are dual to the free orthogonal quantum groups $O_n^+$ at $\delta=n$ and the Brauer algebras $B_{\delta}(k) = A_{(O,\delta)}(k)$ that are dual to the orthogonal groups $O_n$ at $\delta=n$. While the former is build from noncrossing pair partitions and the latter from all pair partitions, the algebras $A_{(\cO^*,\delta)}(k)$ are generated by all noncrossing pair partitions and the triple crossing \includegraphics[scale=0.09]{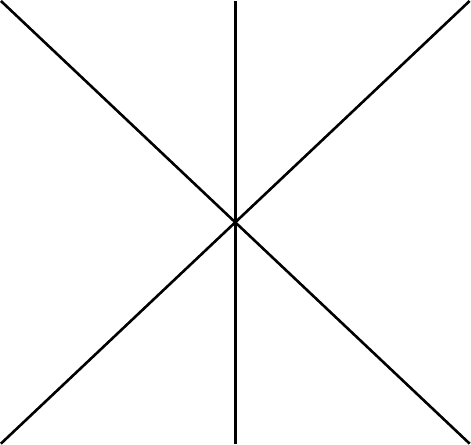}. Thus, some but not all crossings are allowed.

\begin{definition} \label{def.halfliberatedorthogonal}
The algebra $A_{(\cO^*,\delta)}(k)$ at loop parameter $\delta \in \C$ is the diagram algebra spanned by all noncrossing pair partitions on $k$ upper points $1,\dots,k$ and $k$ lower points $1',\dots, k'$ and the partitions $s_2,\dots, s_{k-1}$, where 
\[ s_j = \{ \{j-1,(j+1)' \}, \{ j,j' \}, \{ j+1,(j-1)' \} \} \cup \{ \{i,i' \}, \ \ i\neq j-1,j,j+1 \}. \]
\end{definition}

\begin{figure}[h!]
\begin{center}
\includegraphics[scale=0.25]{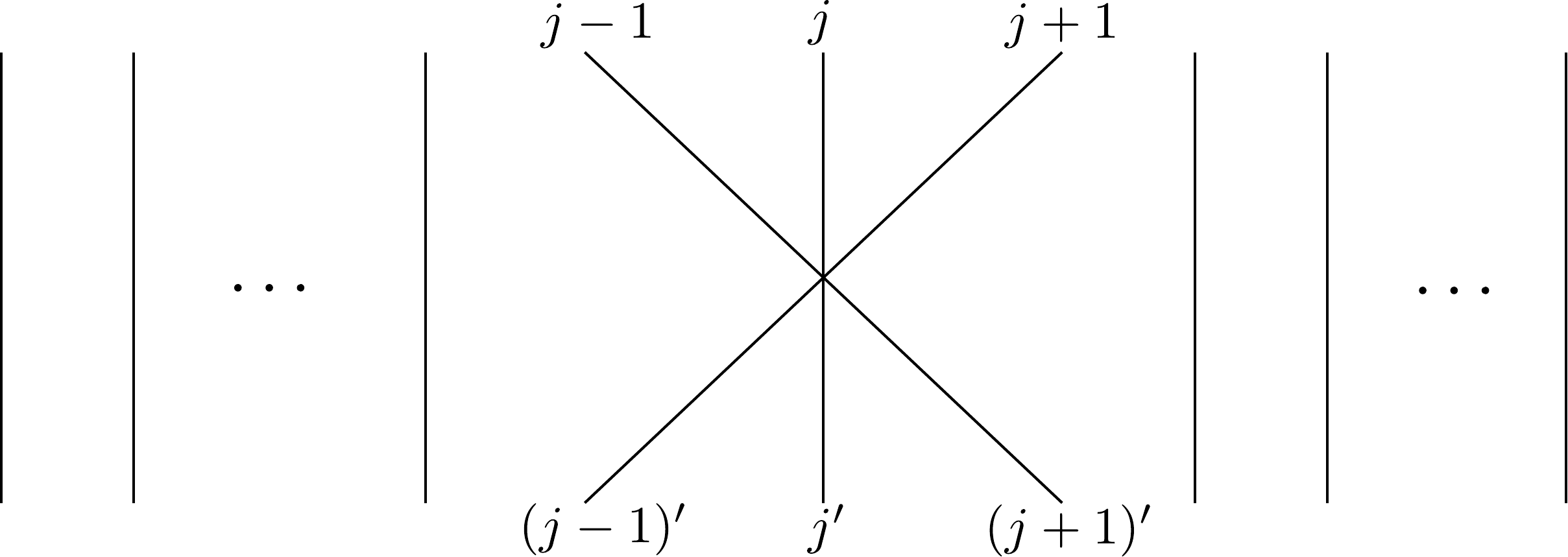}
\end{center}
\caption{\label{fig.triplecrossing}  The set partition $s_j$.}
\end{figure}

There are several other examples of halfliberated diagram algebras, see \cite{BS09} \cite{We13} but to our knowledge, the algebras $A_{(\cO^*,\delta)}(k)$ are the only ones whose representation theory is has been computed, see \cite{BV09}. For our purposes, the crucial result of \cite{BV09} is the following.

\begin{theorem}[Theorem 2.5 in \cite{BV09}]
If $V$ denotes the standard representation of the unitary group $U_n$ and $\bar{V}$ is its adjoint representation, then 
\begin{align*}
A_{(\cO^*,n)}(k) \ \cong \ \End_{U_n}(\underbrace{V \otimes \bar{V}\otimes V \otimes \dots}_{k \text{ copies}}).
\end{align*}
\end{theorem}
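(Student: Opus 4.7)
The plan is to establish the isomorphism via Tannaka--Krein duality and an explicit matching of generators. The foundational combinatorial fact I would invoke is mixed Schur--Weyl duality for $U_n$: when $n$ is sufficiently large relative to $k$, the space $\End_{U_n}(V \ot \bar V \ot V \ot \bar V \ot \dots)$ admits a basis indexed by \emph{oriented pair partitions} on the $2k$ marked points, that is, set partitions into blocks of size two in which every block joins one $V$-slot (odd position) to one $\bar V$-slot (even position). The associated intertwiner is assembled in the standard way from the canonical $U_n$-equivariant evaluation maps $\operatorname{ev}: \bar V \ot V \to \C$ and coevaluations $\operatorname{coev}: \C \to V \ot \bar V$, together with identities.

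Next, I would construct a $*$-algebra homomorphism $\Phi: A_{(\cO^*,n)}(k) \to \End_{U_n}(V \ot \bar V \ot \dots)$ on the generators of Definition~\ref{def.halfliberatedorthogonal}. Any noncrossing pair partition necessarily matches each odd position to an even one (by planarity of the alternating coloring), so sending each block to the corresponding $\operatorname{ev}$ or $\operatorname{coev}$ produces a well-defined $U_n$-intertwiner. The generator $s_j$ swaps positions $j-1$ and $j+1$ while fixing position $j$; since those outer positions carry isomorphic $U_n$-modules (both $V$ or both $\bar V$, depending on parity), I send $s_j$ to $\id^{\ot (j-2)} \ot \sigma_{13} \ot \id^{\ot (k-j-1)}$, where $\sigma_{13}$ is the honest flip $w \ot v' \ot w' \mapsto w' \ot v' \ot w$ on the relevant three-factor block. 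Showing that $\Phi$ is well defined then reduces to checking a short list of relations on generators: $s_j^2 = \id$, commutation of $s_i, s_j$ for $|i-j| \geq 3$, and the identities governing the interaction of $s_j$ with adjacent caps and cups. Each is a direct diagrammatic calculation on the $U_n$ side.

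Bijectivity then splits naturally. For surjectivity, I would note that $\End_{U_n}(V \ot \bar V \ot \dots)$ is generated as an algebra by adjacent cups/caps and by swaps of same-type positions at distance two; the former are clearly in the image of $\Phi$, and the latter are exactly the images of the $s_j$. For injectivity, I would count: reduce an arbitrary word in the generators of $A_{(\cO^*,n)}(k)$ modulo the relations above to a normal form indexed by oriented pair partitions, then compare the resulting count with the dimension of $\End_{U_n}(V \ot \bar V \ot \dots)$. Choosing $n$ large compared to $k$ ensures that mixed Schur--Weyl is faithful, so that no collapse occurs on the target side.

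The main obstacle is the normal-form argument on the domain side: one must show that every product of noncrossing pair partitions and $s_j$'s reduces, modulo the relations, to a single oriented pair partition, and that distinct partitions give linearly independent elements. This is precisely where the half-commutation relation $abc = cba$ inherited from $O_n^*$ does its work, cutting the full Brauer algebra down to an intermediate diagram category whose basis matches the $U_n$-intertwiner basis. Such a reduction argument is the combinatorial heart of the theorem and would be the main obstacle to overcome in any first-principles proof.
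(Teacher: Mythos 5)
This statement is imported verbatim from \cite{BV09} (Theorem 2.5 there); the paper gives no proof of its own, so your proposal can only be measured against the standard argument. Your overall architecture is the right one and is essentially how Banica--Vergnioux proceed: describe $\End_{U_n}(V\otimes\bar V\otimes V\otimes\cdots)$ by mixed Schur--Weyl duality (the walled Brauer picture of Koike, Turaev and Nikitin), build the Tannakian map from diagrams to intertwiners, and use faithfulness of that map for $n$ large to get injectivity. Your closing caveat that injectivity needs $n$ large relative to $k$ is correct and consistent with how the paper uses the theorem (only for $2k<n$).

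There are, however, two genuine gaps. First, $A_{(\cO^*,n)}(k)$ is defined in the paper (Definition 5.1) as a \emph{diagram algebra}: its underlying vector space is the span of the partitions in the category $\cO^*(k,k)$ and the product is concatenation with loop parameter $n$. It is not given by generators and relations, so ``checking a short list of relations on generators'' neither defines $\Phi$ on all of $A_{(\cO^*,n)}(k)$ nor establishes well-definedness; you would first have to prove a presentation, which is at least as hard as the theorem itself. The clean route is to define $\Phi(e_p)=T_p$ for \emph{every} partition $p$ in the category and invoke the standard fact that $p\mapsto T_p$ intertwines composition of diagrams with composition of linear maps (with each closed loop contributing a factor $n$). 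Second, the combinatorial heart of the argument --- that the category of partitions generated by the noncrossing pair partitions and the crossings $s_j$ is exactly the set of \emph{balanced} pair partitions, and that these match bijectively the orientation-admissible diagrams spanning $\End_{U_n}(V\otimes\bar V\otimes\cdots)$ --- is exactly what you defer as ``the main obstacle.'' Without it you get neither surjectivity nor the dimension count needed for injectivity, so the proof is not closed. A smaller but real error: your description of the $U_n$-diagram basis as ``every block joins one $V$-slot to one $\bar V$-slot'' is wrong for through-strings, which must join two slots of the \emph{same} type (only horizontal caps and cups join opposite types); the correct uniform statement is the balanced-parity condition obtained after reversing the parity labelling on one of the two rows.
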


In particular, since $\End_{U_n}(\underbrace{V \otimes \bar{V}\otimes V \otimes \dots}_{k \text{ copies}})$ is known to be semisimple for $2k < n$, see \cite{Ni07}, the above theorem implies in combination with Theorem \ref{thm.semisimplicity} that $A_{(\cO^*,\delta)}(k)$ is generically semisimple. In \cite{Ko89} \cite{Tu89} \cite{Ni07}, these algebras appeared under the name \emph{walled Brauer algebras} and the branching graph of the inclusion $A_{(\cO^*,\delta)}(1) \subset A_{(\cO^*,\delta)}(2) \subset \dots$ at the generic parameter was discussed in \cite{Ni07} and \cite{VN06}. It the latter work, it was shown to be the pascalization of the branching graph $\bar{\Y}$ of the sequence
\begin{align*}
\C[S_0 \times S_0] \subset \C[S_1 \times S_0] \subset \C[S_1 \times S_1] \subset \C[S_2 \times S_1] \subset \C[S_2 \times S_2] \subset \dots.   
\end{align*} 
Therefore, \cite[Theorem 2.11]{VN06} yields the classification of ergodic central measures on the branching graph $\cP(\bar{\Y})$ and hence of extremal traces on the infinite diagram algebra $A_{(\cO^*,\delta)}(\infty)$.

\begin{theorem} \label{thm.boundaryhalliborth}
Every central measure on (the boundary of) $\cP(\bar{\Y})$ is fully supported on (the boundary of) $\bar{\Y}$ and therefore the simplex of ergodic central measures is in natural bijection with two copies $T \times T$ of the Thoma simplex. 
\end{theorem}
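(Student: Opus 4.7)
The plan is to reduce the statement to the work of Vershik-Nikitin \cite{VN06} on the walled Brauer algebra, combined with a tensor-product version of Thoma's theorem. First, I would verify that at the generic parameter the branching graph of the tower $A_{(\cO^*,\delta)}(1) \subset A_{(\cO^*,\delta)}(2) \subset \dots$ is exactly the pascalization $\cP(\bar{\Y})$ of the Bratteli diagram $\bar{\Y}$ of the alternating tower of symmetric group products $\C[S_0 \times S_0] \subset \C[S_1 \times S_0] \subset \C[S_1 \times S_1] \subset \dots$. For the generic value $\delta = n$ with $2k < n$, Theorem 2.5 of \cite{BV09} identifies $A_{(\cO^*,\delta)}(k)$ with $\End_{U_n}(V \otimes \bar V \otimes V \otimes \cdots)$, i.e.\ the walled Brauer algebra, whose branching graph was computed in \cite{Ni07}. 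Since the structure constants depend polynomially on $\delta$ and the Bratteli diagram is an invariant of the semisimple algebras involved, Theorem \ref{thm.semisimplicity} ensures that this identification persists generically.

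Next, I would compute the minimal boundary of $\bar{\Y}$ itself. The inductive limit $C^*$-algebra of $(\C[S_{\lceil n/2 \rceil} \times S_{\lfloor n/2 \rfloor}])_{n \geq 0}$ is the GNS-closure of $\C[S_\infty \times S_\infty]$, and its extremal traces factor as tensor products of extremal traces on each factor $\C[S_\infty]$ by a standard disintegration argument for commuting copies of amenable ICC-type objects. Applying Thoma's Theorem \ref{thm.Thoma} to each factor, this yields a homeomorphism $\partial \bar{\Y} \cong T \times T$.

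The heart of the argument is to show that every ergodic central measure on $\cP(\bar{\Y})$ is supported on the embedded deterministic copy of $\bar{\Y}$, where $\gamma \in \bar{\Y}_n$ is identified with $(n,\gamma) \in \cP(\bar{\Y})_n$. Following the strategy of Lemma \ref{lem.zeroprobnew}, this reduces to establishing the asymptotic estimate
\[
\lim_{n \to \infty} \, \max_{\gamma \in \bar{\Y}_k,\, k<n,\, k \equiv n \bmod 2} \frac{\dim_{\cP(\bar{\Y})}(n-2,\gamma)}{\dim_{\cP(\bar{\Y})}(n,\gamma)} \ = \ 0.
\]
This is precisely what is proved in \cite[Theorem 2.11]{VN06} for the walled Brauer branching graph: the argument exploits the product structure of paths on $\bar{\Y}$ (paths decompose into a pair of paths on two Young graphs in the spirit of Proposition \ref{prop.pathdecompcoupledYoung}) together with explicit asymptotic estimates for path counts, analogous to Corollary \ref{cor.bistochasticlimit} but with the extra parameter controlling the split between the two Young subgraphs.

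The main obstacle, were one to reproduce the argument from scratch, would be the dimension asymptotics, since $\bar{\Y}$ carries two interacting factors and a naive analogue of Lemma \ref{lem.bistochasticeasyrec} requires a two-parameter recursion. However, since both the dimension estimates and the consequent boundary identification have already been carried out in \cite{VN06}, the entire statement follows by combining the identification of $A_{(\cO^*,\delta)}(\infty)$ with the infinite walled Brauer algebra (Theorem 2.5 of \cite{BV09}) with the Vershik-Nikitin reduction and the product version of Thoma's theorem.
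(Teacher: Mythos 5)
Your proposal follows essentially the same route as the paper: identify $A_{(\cO^*,\delta)}(k)$ with the walled Brauer algebra via \cite[Theorem 2.5]{BV09}, recognize its generic branching graph as $\cP(\bar{\Y})$ using \cite{Ni07}, and then invoke \cite[Theorem 2.11]{VN06} for the statement that central measures concentrate on $\bar{\Y}$, whose boundary is $T \times T$. The extra scaffolding you supply (the Lemma~\ref{lem.zeroprobnew}-type dimension criterion with the correct $n-2$ versus $n$ parity comparison, and the factorization of extremal traces on $\C[S_\infty\times S_\infty]$) is consistent with what \cite{VN06} actually proves, so the argument is correct and matches the paper's.
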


As Vershik and Nikitin also point out in \cite{VN06}, the above result precisely means that every trace on $A_{(\cO^*,\delta)}(\infty)$ is an extension of a trace on the quotient $A_{(\cO^*,\delta)}(\infty)/I_{\infty} \cong \C[S_{\infty} \times S_{\infty}]$, where $I_{\infty}$ is once again the ideal spanned by all noninvertible partitions in $A_{(\cO^*,\delta)}(\infty)$.

\appendix

\section{Algebras with coupled Young branching} \label{Append.A}

In this appendix, we collect a few observations on sequences of finite-dimensional $C^*$-algebras $A_0\subset A_1 \subset \dots$, whose branching rules are encoded by the coupled Young graph. By the general theory of Bratelli diagrams, we know that such a sequence must exist and even without detailed knowledge on their structure, we can immediately derive a formula for their dimensions from Proposition \ref{prop.dimCYgraph}. Below, we will also suggest (without proof) a presentation of algebras $A_0 \subset A_1 \subset \dots$ in terms of generators and relations whose branching graph we believe to be the coupled Young graph when the parameter $\delta$ is generic. These algebras are quotients of the centralizers of the hyperoctahedral group and our presentation is based on the presentation of the centralizer of the hyperoctahedral group suggested in \cite[Section 3.3]{Or05}.

Let us start with the formulas for the dimensions of a sequence of finite-dimensional $C^*$-algebras $A_0\subset A_1 \subset \dots$ whose branching graph is $\Theta$.   

\begin{proposition} \label{cor.algebradimensionCY}
We have 
\begin{align*}
\dim A_n = \sum_{l=0 \atop l = n\mod 2}^n \frac{\left( n! \right)^2}{2^{n-l} \cdot l! \cdot  \left(\tfrac{n-l}{2} \right)!}
\end{align*}
\end{proposition}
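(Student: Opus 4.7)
The strategy is to apply the standard identity that the dimension of a finite-dimensional semisimple $\C$-algebra $A_n$ equals the sum of squares of the dimensions of its simple modules, which in terms of the branching graph $\Theta$ reads
\begin{equation*}
\dim A_n \;=\; \sum_{(\lambda,\mu)\in\Theta_n} \dim_{\Theta}(\lambda,\mu)^2.
\end{equation*}
From the definition of $\Theta$ we have the disjoint-union decomposition $\Theta_n = \bigsqcup_{k=0}^{\lfloor n/2 \rfloor} \Y_k \times \Y_{n-2k}$, so the plan is to split the sum along this decomposition and then use the closed-form dimension formula of Proposition \ref{prop.dimCYgraph}.

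Concretely, for $(\lambda,\mu) \in \Y_k \times \Y_{n-2k}$ we have $|\mu| + 2|\lambda| = n$, hence Proposition \ref{prop.dimCYgraph} yields
\begin{equation*}
\dim_{\Theta}(\lambda,\mu) \;=\; \frac{1}{2^k}\,\frac{n!}{k!\,(n-2k)!}\,\dim_{\Y}(\lambda)\,\dim_{\Y}(\mu).
\end{equation*}
Squaring and summing over $\lambda \in \Y_k$ and $\mu \in \Y_{n-2k}$, I would factor out the prefactor (which depends only on $k$ and $n$) and then invoke the classical Burnside identity $\sum_{\lambda \in \Y_m} \dim_{\Y}(\lambda)^2 = m!$, which is just $\dim \C[S_m] = m!$. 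This gives
\begin{equation*}
\sum_{(\lambda,\mu)\in \Y_k \times \Y_{n-2k}} \dim_{\Theta}(\lambda,\mu)^2 \;=\; \frac{(n!)^2}{4^k\,(k!)^2\,((n-2k)!)^2}\cdot k!\cdot (n-2k)! \;=\; \frac{(n!)^2}{4^k\,k!\,(n-2k)!}.
\end{equation*}

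Finally, substituting $l = n-2k$ (so that $k = \tfrac{n-l}{2}$ and $l$ runs over nonnegative integers at most $n$ with $l \equiv n \pmod 2$), and using $4^k = 2^{2k} = 2^{n-l}$, the sum becomes exactly the claimed expression. No step is genuinely difficult here: the formula is essentially a bookkeeping consequence of Proposition \ref{prop.dimCYgraph} combined with $\sum_\lambda (\dim_\Y \lambda)^2 = m!$, and the only care needed is in the parity/index bookkeeping when changing variables from $k$ to $l$.
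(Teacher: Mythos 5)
Your proof is correct and follows essentially the same route as the paper: both use the sum-of-squares identity over $\Theta_n$, the closed-form dimension formula of Proposition \ref{prop.dimCYgraph}, and Burnside's identity $\sum_{\lambda\in\Y_m}\dim_\Y(\lambda)^2=m!$, followed by the change of variables $l=n-2k$. Your organization (applying Burnside to both factors at once) merely avoids the paper's separate even/odd case analysis, but it is the same argument.
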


\begin{proof}
It follows from Proposition \ref{prop.dimCYgraph} that
\begin{align*}
\dim A_n &= \sum_{(\lambda,\mu) \in \Theta_n} \dim_{\Theta} \left((\lambda,\mu)\right)^2 \\
&= \sum_{|\mu| = n \mod 2 }  \left(\binom{n}{|\mu|} (n-|\mu|-1)!! \dim_{\Y}(\mu) \right)^2 \sum_{|\lambda| = \tfrac{n-|\lambda|}{2}}\left(\dim_{\Y}( \lambda)\right)^2 \\
&= \sum_{|\mu| = n \mod 2 }  \left(\binom{n}{|\mu|} (n-|\mu|-1)!! \dim_{\Y}(\mu) \right)^2 \left(\frac{n-|\lambda|}{2} \right)!
\end{align*}
If $n=2m+1$ is oneven, this is equal to
\begin{align*}
&\sum_{k=0}^m \left(\sum_{|\mu|=2k+1} \dim_{\Y}(\mu)^2 \right) \left(\binom{2m+1}{2k+1}(m-k)! (2(m-k-1))!! \right)^2 \\
&= \sum_{k=0}^m (2k+1)! \left(\binom{2m+1}{2k+1}(m-k)! (2(m-k-1))!! \right)^2 \\
&= \sum_{k=0}^m \frac{((2m+1)!)^2}{(2k+1)! (m-k)! 2^{2(m-k)}}.
\end{align*}
The $n=2m$ case follows from an analogous computation.
\end{proof}

In \cite[Section 3.3]{Or05}, Orellana describes a presentation of the centralizer algebra $B_n(N)$ of the hyperoctahedral group $H_N$ in the $n$-th tensor power of its standard representation. The presentation is given in terms of generators $b_i,e_i,s_i, \ 1 \leq i \leq n-1$ where the $s_i$ generate a copy of the symmetric group algebra $\C[S_n]$ and where the $e_i/ \sqrt{n}$ are Jones projections. We will now define algebras $A_n$ such that $A_n$ is the quotient of $B_n(N)$ by the ideal generated by the Jones projections $e_i, \ 1\leq i \leq n-1$.

\begin{definition}
The \emph{coupled Young algebra} $A_n$ or order $n$ is the unital $*$-algebra generated by selfadjoint elements $b_i,s_i, \ 1 \leq i \leq n-1$ and relations
\begin{align*}
s_i^2 = 1, \qquad b_i^2 &= b_i \qquad s_i b_i = b_i s_i = b_i, \qquad &\text{for } 1 \leq i \leq n-1, \\
s_i s_j = s_j s_i, \qquad b_i b_j &= b_j b_i, \qquad s_ib_j = b_j s_i,  \qquad &\text{  for }  |i-j| \geq 2, \\
s_i s_{i+1} s_i = s_{i+1} s_i s_{i+1} \qquad b_i b_{i+1} &= b_{i+1}b_i = 0 \qquad s_i s_{i+1} b_i s_{i+1}s_i = b_{i+1}, \qquad &\text{for } 1 \leq i \leq n-2.
\end{align*}
\end{definition}

\begin{oproblem}
Determine the representations of the algebras $A_n$ and prove in particular that the branching graph of the sequence $A_0 \subset A_1 \subset \dots$ is the coupled Young graph.
\end{oproblem}


\begin{thebibliography}{CCMT12}\setlength{\itemsep}{-1mm} \setlength{\parsep}{0mm} \small

\bibitem[Ai99]{Ai99} M. Aigner, Catalan-like numbers and determinants, \emph{J. Combin. Th., Series  A} \textbf{87} (1), 33--51 (1999).

\bibitem[Ai08]{Ai08} M. Aigner, Enumeration via ballot numbers, \emph{Discr. Math.} \textbf{308} (12),  2544--2563 (2008).


\bibitem[BS09]{BS09} T. Banica and R. Speicher, Liberation of orthogonal Lie groups, \emph{Adv. Math.} \textbf{222} (4), 1461--1501 (2009).

\bibitem[BV09]{BV09} T. Banica and R. Vergnioux, Invariants of the halfliberated orthogonal group, \emph{ Annales de l'Institut Fourier} \textbf{60} no. 6, p. 2137--2164 (2010).


\bibitem[Bn98]{Bn98} P. Biane, Representations of symmetric groups and free probability,

\bibitem[BiJo95]{BiJo95} D. Bisch and V.F.R. Jones, Algebras associated to intermediate subfactors, \emph{Invent. Math.} \textbf{128} (1997), 89--157.


\bibitem[BO16]{BO16} A. Borodin and G. Olshanski, Representations of the infinite symmetric group, (Cambridge Studies in Advanced Mathematics). Cambridge: Cambridge University Press, 2016. 

\bibitem[Br37]{Br37} R. Brauer, On algebras which are connected with the semisimple continuous groups, \emph{Ann. Math.} \textbf{38} (4), 854--872 (1937).

\bibitem[BuGo15]{BuGo15} A. Bufetov and V. Gorin, Stochastic monotonicity in Young graph and Thoma theorem, \emph{Int. Math. Res. Not.} \textbf{2015} (23), 12920--12940 (2015).

\bibitem[COSSZ20]{COSSZ20} L. Colmenarejo, R. Orellana, F. Saliola, A. Schilling, M. Zabrocki, An insertion algorithm on multiset partitions with applications to diagram algebras, \emph{J. Algebra} \textbf{557}, 97--128 (2020).

\bibitem[dMH13]{dMH13} E. delMas and T. Halverson, Representations of the Rook-Brauer algebra, \emph{Comm. Alg.} \textbf{42}, 423--443 (2013).


\bibitem[FM20]{FM20} J. Flake and L. Maaßen, Semisimplicity and indecomposable objects in interpolating partition categories, \emph{preprint, arXiv:2003.13798} (2020).

\bibitem[FrWe16]{FrWe16} A. Freslon and M. Weber, On the representation theory of partition (easy) quantum groups, \emph{J. reine u. angew. Math. [Crelle's Journal]}, \emph{720} (2016).

\bibitem[Go12]{Go12} V. Gorin, The $q$-Gelfand-Tsetlin graph, Gibbs measures and $q$-Toeplitz matrices. \emph{Adv. Math.}  \textbf{229},  no. 1, 201--266 (2012).



\bibitem[GHJ89]{GHJ89} F. Goodman, P. de la Harpe and V.F.R. Jones, Coxeter graphs and towers of algebras. \emph{Math. Sci. Res. Inst. Pub.} \textbf{14}. Springer-Verlag, New York,  1989.

\bibitem[GL05]{GL04} J. J. Graham, G. I. Lehrer, Cellular algebras and diagram algebras in representation theory. \emph{Representation Theory of Algebraic Groups and Quantum Groups}, 141--173, Mathematical Society of Japan, Tokyo, Japan, 2004.


\bibitem[HR05]{HR05} T. Halverson and A. Ram, Partition algebras, \emph{Europ. J. Comb.} \textbf{26} (6), 869--921 (2005).

\bibitem[Jo83]{Jo83} V.F.R. Jones, Index for subfactors, \emph{Inv. Math.} \textbf{72}, 1--25 (1983).

\bibitem[Jo94]{Jo94} V.F.R. Jones, \emph{The Potts model and the symmetric group}, 
Subfactors: Proceedings of the Taniguchi Symposium on Operator Algebras (Kyuzeso, 1993), World Sci. Publishing, River Edge, NJ, pp. 259--267 (1994).

\bibitem[Ko89]{Ko89} K. Koike, On the decomposition of tensor products of the representations of classical groups: by means of the universal characters, \emph{Adv. Math.} \textbf{74}, 57--86 (1989).



\bibitem[LPW19]{LPW19} G. Lechner, U. Pennig and S. Wood,  Yang-Baxter representations of the infinite symmetric group, \emph{Adv. Math.} \textbf{355} (2019), 106769.


\bibitem[Ma96]{Ma96} P. Martin, The structure of the partition algebras, \emph{J. Algebra} \textbf{183}, 319--358 (1996).

\bibitem[MaS94]{MaS94} P. Martin and H. Saleur, Algebras in higher-dimensional statistical mechanics - the exceptional partition (mean field) algebras, \emph{Lett. Math. Phys.} \textbf{30} 179--185 (1994).

\bibitem[Ma00]{Ma00} P. Martin, The partition algebra and the Potts model transfer matrix spectrum in high dimensions, \emph{J. Phys. A: Math. Gen.} \textbf{33}, 3669--3695 (2000).

\bibitem[Me17]{Me17} P.L. M\'eliot. Representation Theory of Symmetric Groups. \emph{Discrete Mathematics and its Applications}, CRC Press, 2017.

\bibitem[Ni07]{Ni07} P.P. Nikitin, The centralizer algebra of the diagonal action of the group $GL_n(\C)$ in a mixed tensor space, \emph{J. Math. Sci.} \textbf{144}, 1479--1493 (2007).

\bibitem[O03]{O03} G. Olshanski, Point Processes Related to the Infinite Symmetric Group, in: The Orbit Method in Geometry and Physics. Progress in Mathematics, vol 213. Birkhäuser, Boston, MA, 2003.

\bibitem[Or05]{Or05} R. Orellana,  On the algebraic decomposition of centralizer algebras of the hyperoctahedral group, \emph{Contemporary Math.} \textbf{376}, 345--357 (2005). 

\bibitem[RaWe16]{RaWe16} S. Raum and M. Weber, The full classification of orthogonal easy quantum groups, \emph{Comm. Math. Phys.}, \textbf{341} (3), 751--779, (2016).

\bibitem[Sa19a]{Sa19a} R. Sato, Quantized Vershik-Kerov theory and quantized central measures on branching graphs , \emph{J. Func. Anal.} 2522--2557 (2019).

\bibitem[Sa19b]{Sa19b} R. Sato, Inductive limits of compact quantum groups and their unitary representations, \emph{preprint, arXiv:1908.03988} (2019). 

\bibitem[Th64]{Th64} E. Thoma, Die unzerlegbaren, positiv–definiten Klassenfunktionen der
abzählbar unendlichen symmetrischen Gruppe. \emph{Math. Zeitschr.}, \textbf{85}
(1964), 40--61.


\bibitem[Ti08]{Ti08} T. Timmermann. An invitation to quantum groups and duality. EMS Textbooks in Mathematics.European Mathematical Society (EMS), Z\"urich, 2008.

\bibitem[Tu89]{Tu89} V. G. Turaev, Operator invariants of matrices and R-matrices, \emph{Izv. Akad. Nauk SSSR, Ser. Mat.}, \textbf{53} (5), 1073--1107 (1989).


\bibitem[VK81]{VK81} A. M. Vershik and S. V. Kerov, Characters and factor representations
of the infinite symmetric group. \emph{Dokl. Akad. Nauk SSSR} \textbf{257} (1981),1037--1040 (Russian); English translation in \emph{Soviet Math., Doklady} \textbf{23}
(1981), 389--392.

\bibitem[VK82]{VK82} A. M. Vershik and S. V. Kerov, Characters and factor representations
of the infinite unitary group. \emph{Doklady AN SSSR} \textbf{267} (1982), no. 2, 272--
276 (Russian); English translation: \emph{Soviet Math. Doklady} \textbf{26} (1982),
570--574.

\bibitem[VM15]{VM15} A. Vershik and A. Malyutin, Phase transition in the exit boundary problem for random walks on groups, \emph{Func. Ana. its Appl.}  \textbf{49}, 86--96 (2015).

\bibitem[VM18]{VM18} A. M. Vershik and A. Malyutin, The absolute of finitely generated groups: I. Commutative (semi) groups, \emph{European J. Math.}, \textbf{4} (2018), 1476--1490.

\bibitem[VN06]{VN06} A. M. Vershik and P. P. Nikitin, Traces on infinite-dimensional Brauer algebras

 \bibitem [VN11]{VN11} A. M. Vershik, P. P. Nikitin, Description of the characters and factor representations of the infinite symmetric inverse semigroup. (Russian) \emph{Funktsional. Anal. i Prilozhen.} \textbf{45}, no. 1, 16--30 (2011); translation in \emph{Funct. Anal. Appl.} \textbf{45},  no. 1, 13--24 (2011).


\bibitem[Wa20]{Wa20} J. Wahl, Traces on diagram algebras I: Free partition quantum groups, random lattice paths and random walks on trees, arXiv:2006.07312.
 
\bibitem[Was81]{Was81} A. J. Wassermann, Automorphic actions of compact groups on operator algebras, \emph{PhD thesis, University of Pennsylvania}, 1981.

\bibitem[Wen88]{Wen88} H. Wenzl, On the structure of Brauer's centralizer algebras, \emph{Ann. Math.} \textbf{128}, 173--193 (1988).

\bibitem[We13]{We13} M. Weber, On the classification of easy quantum groups, Adv. Math. 245, 500--533 (2013).

\bibitem[W46]{W46} H. Weyl, The classical groups, Princeton University Press, Rev. 1946.

\bibitem[Wo88]{Wo88}
  S.L.~Woronowicz, Tannaka-Krein duality for compact matrix pseudogroups. Twisted SU(N) groups. \emph{Invent. Math.} \textbf{93}, 1 (1988), 35–76.

\end{thebibliography}
\end{document}